\definecolor{mygray}{gray}{0.85}
\renewcommand{\leq}{\leqslant}
\renewcommand{\geq}{\geqslant}
\def\subsection{\@startsection{subsection}{3}%
  \z@{.5\linespacing\@plus.7\linespacing}{.3\linespacing}%
  {\bfseries\centering}}
\def\subsubsection{\@startsection{subsubsection}{3}%
  \z@{.5\linespacing\@plus.7\linespacing}{.3\linespacing}%
  {\centering}}
\def\myfnt{\ifx\protect\@typeset@protect\expandafter\footnote\else\expandafter\@gobble\fi}
\newtheorem{theorem}{Theorem}[section]
\newtheorem{corollary}[theorem]{Corollary}
\newtheorem{definition}[theorem]{Definition}
\newtheorem{lemma}[theorem]{Lemma}
\newtheorem{proposition}[theorem]{Proposition}
\newtheorem{example}[theorem]{Example}
\newtheorem{observation}[theorem]{Observation}
\newtheorem{fact}[theorem]{Fact}
\newtheorem{remark}[theorem]{Remark}
\newtheorem{notation}[theorem]{Notation}
\newtheorem{context}[theorem]{Context}
\newtheorem{construction}[theorem]{Construction}
\newtheorem{convention}[theorem]{Convention}
\newtheorem*{theorem1}{Theorem 1.1}
\newtheorem*{theorem5}{Theorem 1.5}
\newcounter{claimcounter}
\numberwithin{claimcounter}{theorem}
\newenvironment{claim}{\stepcounter{claimcounter}{\noindent {\underline{\em Claim \theclaimcounter}.}}}{}
\newcommand{\pureindep}[1][]{%
  \mathrel{
    \mathop{
      \vcenter{
        \hbox{\oalign{\noalign{\kern-.3ex}\hfil$\vert$\hfil\cr
              \noalign{\kern-.7ex}
              $\smile$\cr\noalign{\kern-.3ex}}}
      }
    }\displaylimits_{#1}
  }
}
\begin{document}

\begin{abstract} We prove that the theory of open projective planes is complete and strictly stable, and infer from this that Marshall Hall's free projective planes $(\pi^n : 4 \leq n \leq \omega)$ are all elementary equivalent and that their common theory is strictly stable and decidable, being in fact the theory of open projective planes. We further characterize the elementary substructure relation in the class of open projective planes, and show in particular that $(\pi^n : 4 \leq n \leq \omega)$ is an elementary chain. 
We then prove that the theory of open projective planes does not have a prime model, that it has elimination of quantifiers down to Boolean combinations of existential formulas, and that it is not model complete.
%
Finally, we characterize the forking independence relation in models of the theory and prove that the $\pi^n$'s ($4 \leq n \leq \omega)$ are strongly type-homogeneous.
\end{abstract}

\title{First-Order Model Theory of Free Projective Planes}
\thanks{The second author was supported by European Research Council grant 338821.}

\author{Tapani Hyttinen}
\address{Department of Mathematics and Statistics,  University of Helsinki, Finland}

\author{Gianluca Paolini}
\address{Department of Mathematics ``Giuseppe Peano'', University of Torino, Italy.}

\date{\today}
\maketitle

\section{Introduction}

	Free constructions in incidence geometry \cite{handbook} are a crucial tool in proving existence results. This is the case for example for the theory of generalized $n$-gons, see e.g. \cite{tits}, but also in many other contexts (see \cite{funk1, funk2}). The first example of free construction was introduced in the context of projective planes by Marshall Hall in his famous paper \cite{hall_proj}. Also in \cite{hall_proj}, as the most canonical example of free construction, Hall introduced certain specific objects: the {\em free projective planes} $\pi^n$. In this paper we will study the first-order model theory of the free projective planes.
	

	A part from the pioneering studies of Hall, the free projective planes received the attention of eminent geometers such as Barlotti \cite{barlotti}, Dembowski \cite{dem}, and Hughes and Piper  \cite[Chapter 7]{piper}, and of many other scholars, see e.g. \cite{ditor, ellers, iden, sieben, sandler}. 

	Already in the original article of Hall \cite{hall_proj} many important results on free projective planes were proved; among them that free projective planes of different rank are non-isomorphic and that finitely generated subplanes of free planes are free -- this last result was later improved by Kopeikina \cite{kope} dispensing with finite generation.
	
	Again in \cite{hall_proj}, Hall isolated a property of projective planes defined fobidding\footnote{A theme certainly dear to model-theorists, see e.g. \cite{cherlin} and references there.} certain finite subconfigurations (and so a first-order property) and called the projective planes satisfying this property {\em open} (cf. Definition \ref{open}). He then proved that every free projective plane is open, and that finitely generated open projective planes are free, leaving open the question of existence of a non-finitely generated open projective plane which is not free. This was settled in the negative by Kopeikina in \cite{kope}, providing a countable counterexample, and later improved by Kelly \cite{kelly, kelly_article}, which showed that there are in fact $\aleph_0$-many countable counterexamples. 
	
	The main result of our paper is the following axiomatization result, which confirms the deep foresight of Hall on the subject:
	
	\begin{theorem}[Main Theorem]\label{th_complete} The theory $T$ of open projective planes is complete.
\end{theorem}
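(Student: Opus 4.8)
The plan is to reduce completeness to the statement that any two models of $T$ are elementarily equivalent, and to obtain the latter by a back-and-forth argument; the \L{}o\'{s}--Vaught test is not available, since $T$ has $2^{\kappa}$ pairwise non-isomorphic models in every infinite power $\kappa$, so there is no categoricity to invoke. The structural input I would rely on is Hall's theorem that a finitely generated open projective plane is free: this makes the subplane $\langle\bar a\rangle_M$ generated by a finite tuple $\bar a$ in a model $M\models T$ canonical, being the free completion of a finite configuration and so determined up to isomorphism by finitely much incidence data. Combined with Hall's combinatorial criterion for openness (absence of confined subconfigurations), this equips the class with a predimension and with an associated notion of a \emph{self-sufficient} (closed) subplane, and these self-sufficient finitely generated free subplanes are what the back-and-forth will move between.

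Concretely, I would play the Ehrenfeucht--Fra\"{\i}ss\'{e} game on $M,N\models T$ and have the duplicator preserve the invariant that the played tuples $\bar a$ in $M$ and $\bar b$ in $N$ induce an isomorphism $\langle\bar a\rangle_M\cong\langle\bar b\rangle_N$ sending $\bar a$ to $\bar b$ that is, moreover, self-sufficient on both sides; at stage $0$ this holds vacuously. A move $c\in M$ is answered by an \emph{extension (richness) lemma}: the enlarged subplane $\langle\bar a c\rangle_M$ is a self-sufficient extension of $\langle\bar a\rangle_M$ of one of finitely many minimal types, and every such minimal extension is realized over $\langle\bar b\rangle_N$ inside $N$. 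This last point must be extracted from the axioms of $T$ themselves. A model of $T$ is an infinite projective plane (a finite projective plane contains a confined configuration --- the Fano plane being the extreme case), so every line carries infinitely many points and every point lies on infinitely many lines; one then produces the required new point or line of $N$ by the free-completion procedure and uses openness of $N$ to certify that adjoining it creates no confined configuration, so the invariant is restored. Iterating in both directions gives $M\equiv N$; since $\pi^{4}$ is a model of $T$, the theory is complete, and being recursively axiomatized it is decidable --- as is the common theory of the $\pi^{n}$, which equals $T$.

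The heart of the matter, and the step where I expect the real work, is exactly this richness lemma. It cannot be replaced by the cruder assertion that every model of $T$ is free or ``rich'' in a global sense, since there are countable open projective planes that are not free (Kopeikina, Kelly). The duplicator's invariant must therefore be coarse enough to survive the one-element move in \emph{every} model of $T$, while still fine enough to control first-order formulas, and verifying that it has both properties amounts to an amalgamation-type analysis of open configurations: one must show that a self-sufficient extension of a finite free subplane carried out on one side can always be copied on the other without ever producing a forbidden confined configuration. This combinatorial study of the free completion and of Hall's predimension is the substantive content; once it is in place, completeness follows, and with it the remaining model-theoretic conclusions announced in the abstract --- the elementary equivalence of the $\pi^{n}$, strict stability, the characterization of $\preceq$, and the analysis of forking.
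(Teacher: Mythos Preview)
Your broad strategy---an Ehrenfeucht--Fra\"iss\'e game whose invariant is an isomorphism between self-sufficient finitely generated (hence free) subplanes---is correct in spirit and matches the paper's architecture. You are also right that the extension over a \emph{closed} subplane $A=\langle\bar a\rangle_M$ is tame: any $c\notin A$ meets $A$ in at most one element, and $\langle\bar a c\rangle_M\cong F(A\cup\{c\})$ (this is the content of Proposition~\ref{prop_embed_th}).

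The genuine gap is your ``richness lemma''. Your justification---``produce the required new point or line of $N$ by the free-completion procedure and use openness of $N$''---is a non-sequitur: free completion manufactures \emph{abstract} elements, not elements of a given $N$. What you must show is that an arbitrary $N\models T$ with $B\leq_{HF}N$, $B\neq N$, actually contains a point on a prescribed line of $B$ (type~$1$) and a point on \emph{no} line of $B$ (type~$0$). The first does follow from a cofinality fact (Lemma~\ref{cofinal_points}): points on any fixed line are cofinal in every HF-order of $N$, and $B$ can be taken as an initial segment. The type-$0$ case, however, is not handled by ``every line has infinitely many points'', since you need a point avoiding \emph{infinitely many} lines simultaneously; establishing this directly requires a careful combinatorial construction that you have not given.

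The paper sidesteps this difficulty entirely, and this is the idea your outline is missing. Rather than prove richness for arbitrary models, the paper replaces $M,N$ by $\kappa^+$-saturated ultrapowers $(M^*,<)$, $(N^*,<)$ carrying HF-orderings of cofinality $\geq\omega_1$ (Context~\ref{notation_ultraprod}). The game is played between $M^*$ and $N^*$; the invariant is that $f_n$ maps a countable $cl_<$-closed subplane to a countable $cl_<$-closed subplane. The extension step is then: (i)~enlarge $A_n\cup\{c\}$ to something F-constructible over $A_n$ (Lemma~\ref{lemma_Fcons}, which \emph{uses saturation} to insert infinite ``F-constructible from $\emptyset$'' witnesses below deficient elements), and (ii)~embed that F-construction into $N^*$ over $B_n$ (Theorem~\ref{embedding_th} and Claim~\ref{embed_claim}, which again \emph{use saturation and the $\omega_1$-cofinality} of $<$ to manufacture the images). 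Compactness and saturation are doing precisely the work your richness lemma would do, but in a setting where that work becomes routine.

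So: same skeleton, but the load-bearing step is handled by passing to saturated ultrapowers rather than by a direct richness argument in arbitrary models. If you want to pursue your route, you would need to supply a genuine proof of type-$0$ richness; the paper's route avoids that obligation.
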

	
	The second crucial notion in our paper is the notion of {\em HF-constructibility}, which connects with the already mentioned notion of openess. The well-founded version of the notion of HF-constructibility was introduced by Siebenmann in \cite{sieben} (see also \cite{ditor, ellers}), where he declares that a (projective) plane $B$ is (well-foundedly) HF-constructible over a plane $A$ if there is an element-by-element well-founded construction of $B$ over $A$ adding at most two incidences at a time (cf. Definition~\ref{HF_construct_def}). Also in \cite{sieben} Siebenmann proved the surprising result that the countable projective planes well-foundedly HF-constructible from $\emptyset$ are {\em exactly} the free projective planes.
	
	In our paper we introduce\footnote{A forerunner of this notion was introduced by the authors in \cite{paolini&hyttinen}.} a natural but crucial variant of this notion which dispenses with the assumption of well-foundedness (cf. Definition \ref{def_nwf_cons}). With this notion at hand, we were able to prove the following crucial characterization:
	
	\begin{theorem}\label{theorem_characterization} A projective plane is open iff it is HF-constructible over the $\emptyset$.
\end{theorem}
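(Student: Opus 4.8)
The plan is to observe that, once Definitions~\ref{open}, \ref{HF_construct_def} and~\ref{def_nwf_cons} are unwound, both implications rest on a single combinatorial reformulation of openness: a projective plane $B$ is open if and only if every finite subconfiguration $\Gamma\subseteq B$ contains a point lying on at most two lines of $\Gamma$, or a line containing at most two points of $\Gamma$ — equivalently, the bipartite incidence graph of $B$ is \emph{finitely $2$-degenerate} (every finite subgraph has a vertex of degree $\leq 2$), which is precisely the assertion that $B$ contains no confined configuration. On the other side, a construction of a plane $B$ that is HF over $\emptyset$ is, by its very form, nothing more than a linear order $<$ on the elements of $B$ in which each element is incident to at most two $<$-smaller elements. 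So the two directions are nearly symmetric, modulo bookkeeping.

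For the direction from HF-constructibility to openness, fix such a construction of $B$ and let $<$ be the associated linear order. Given a finite subconfiguration $\Gamma\subseteq B$, let $b$ be its $<$-maximal element; by the defining property of the construction, $b$ is incident to at most two $<$-smaller elements of $B$, hence to at most two elements of $\Gamma\setminus\{b\}$, so $\Gamma$ is not confined. As $\Gamma$ was arbitrary, $B$ is open.

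For the converse we must produce a linear order on $B$ witnessing HF-constructibility over $\emptyset$. By the reformulation of openness, every \emph{finite} subconfiguration $\Gamma$ of $B$ carries such an order: repeatedly delete from the incidence graph of $\Gamma$ a vertex of degree $\leq 2$ and reverse the deletion order. We lift this to all of $B$ by propositional compactness: take variables $x_{pq}$, read ``$p<q$'', for ordered pairs of distinct elements of $B$; impose the axioms of linear order; and, for each element $b$ and each three-element set $\{a_1,a_2,a_3\}$ of elements incident to $b$, impose the clause $\lnot(x_{a_1b}\wedge x_{a_2b}\wedge x_{a_3b})$. Any finite subset of these clauses involves only finitely many elements and is satisfied by a good order of the finite subconfiguration they span; by compactness the whole set is satisfiable, and a satisfying assignment is a linear order $<$ on $B$ in which every element is incident to at most two $<$-smaller elements. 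It then remains to verify that such an order constitutes a legitimate HF-construction of $B$ over $\emptyset$ in the sense of Definition~\ref{def_nwf_cons}: when the order reaches an element $b$ incident to two $<$-earlier elements $c_1\neq c_2$, there is no $<$-earlier point lying on both (resp.\ $<$-earlier line through both), since otherwise $B$ would contain two distinct points incident with two distinct lines, so adjoining the point $c_1\wedge c_2$ (resp.\ the line $c_1\vee c_2$) is an available move; when $b$ is incident to one or to zero $<$-earlier elements, adjoining $b$ amounts to freely placing a point (line) on an existing line (point), or freely placing a point (line), which is always available; and since $<$ is defined on all of $B$, the construction yields $B$.

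The step I expect to demand the most care is this last reconciliation — matching a ``linear order with backward-incidence-degree $\leq 2$'' against the precise bookkeeping of Definition~\ref{def_nwf_cons}, in particular the status of the initial configuration (which over $\emptyset$ collapses to the empty configuration) and the confirmation that no adjunction is ever blocked. I also want to stress that passing to \emph{linear}, rather than well-founded, orders is essential, and is exactly the point at which the non-well-founded refinement of Siebenmann's notion is needed: by Siebenmann's theorem the only countable planes admitting a \emph{well-ordering} of the required kind are the free planes $\pi^n$, whereas by Kopeikina and Kelly there exist open planes that are not free, so compactness can supply a linear order but not a well-order.
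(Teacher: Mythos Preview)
Your proof is correct and follows the same overall strategy as the paper: the easy direction is identical, and for the hard direction both you and the paper reduce to the finite case (Fact~\ref{open_implies_constr}) and then glue the finite HF-orderings into a global one by a compactness-type argument. The only methodological difference is in the gluing: the paper takes an ultrafilter $\mathfrak{U}$ on the set $X$ of finite subconfigurations of $P$ (refining the filter generated by the sets $X_A=\{B\in X:A\subseteq B\}$), and for each finite $A$ selects the unique HF-ordering $<^*_A$ of $A$ such that $\{B\in X:\,<_B\!\restriction A\,=\,<^*_A\}\in\mathfrak{U}$; coherence of the $<^*_A$ then follows from the ultrafilter property, and their union is the desired order. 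Your propositional-compactness formulation is entirely equivalent and arguably more transparent; neither buys anything the other does not.

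One remark: your ``reconciliation'' paragraph is over-cautious. Definition~\ref{def_nwf_cons} asks only for a linear order on $B$ in which each element is incident with at most two $<$-smaller elements; there is no operational requirement that adjunctions be ``available'' or that the intersection point not already exist at an earlier stage. The order you extract from compactness is therefore already an HF-ordering over $\emptyset$ with nothing further to verify. (Your concern would be germane to the well-founded Definition~\ref{HF_construct_def}, but, as you correctly note, that notion is too restrictive here.)
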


	Apart from its model-theoretic usefulness (which will be clear later), Theorem~\ref{theorem_characterization} provides a characterization of the open projective planes which explains exactly which among the countable open projective planes are the free projective planes, i.e. the ones where the HF-construction can \mbox{be taken to be {\em well-founded}.}
	

\smallskip

	We then prove:

	\begin{theorem}\label{th_elem_substr} If $A$ and $B$ are open projective planes and $A \subseteq B$, then $A$ is elementary in $B$ if and only if $B$ is HF-constructible over $A$. Furthermore, if $A$ and $B$ are free projective planes and $A$ is finitely generated, then $A$ is elementary in $B$ if and only if $B$ is well-foundedly\footnote{In \cite{kope} (in Russian) Kopeikina extended Hall's terminology introducing a notion of free product of projective planes and thus a notion of free factor of projective planes. With respect to this termonology (cf. also \cite[pg. 380]{sandler} (in English)) we have that if $A$ and $B$ are free projective planes and $A$ is finitely generated, then $A$ is elementary in $B$ if and only if $A$ is a free factor of $B$.} HF-constructible over $A$.
\end{theorem}

	From the theorems above we infer the following crucial corollary on free projective planes, which was {\em our} original motivation for this study.

	\begin{corollary}[Main Corollary]\label{corollary_free} The free projective planes $(\pi^n : 4 \leq n \leq \omega)$ are all elementary equivalent, and they form an elementary chain with respect to the natural embedding mapping $\pi^n_0$ into $\pi^m_0$, for $4 \leq n \leq m \leq \omega$. Their common theory is the theory of open projective planes, and thus it is decidable.
\end{corollary}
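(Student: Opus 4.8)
The plan is to deduce everything from the results already established --- Theorem~\ref{th_complete} (completeness of $T$), Theorem~\ref{th_strictly_stable} (strict stability), Theorem~\ref{th_elem_substr} (the $\preceq$/HF-constructibility equivalence) --- together with Hall's classical fact that every free projective plane is open. First I would note that for each $n$ with $4 \leq n \leq \omega$ the plane $\pi^n$ is open, so $\pi^n \models T$; since $T$ is complete by Theorem~\ref{th_complete}, we get $T = \operatorname{Th}(\pi^n)$ for every such $n$, and in particular all the $\pi^n$ are elementarily equivalent with common theory $T$. Strict stability of this common theory is then exactly Theorem~\ref{th_strictly_stable}, and decidability follows because $T$ is recursively axiomatized --- the projective-plane axioms are finitely many, and openness (Definition~\ref{open}) is given by a recursively enumerable list of sentences forbidding finite subconfigurations --- so $T$, being complete and recursively axiomatized, is decidable.

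The real content is that $(\pi^n : 4 \leq n \leq \omega)$ is an elementary chain along the natural maps. Fix $4 \leq n \leq m \leq \omega$ and write $\pi^n_0$ as a line $\ell$, points $a_1,\dots,a_{n-2}$ on $\ell$, and two external points $b,c$; then $\pi^n_0$ sits inside $\pi^m_0$ by the identity on $\ell,b,c$ and on $a_1,\dots,a_{n-2}$. By the standard theory of free completions --- applicable since $\pi^n_0$ contains no confined subconfiguration, having only one line --- the subplane of $\pi^m$ generated by this copy of $\pi^n_0$ is isomorphic to $\pi^n$ over $\pi^n_0$; identify $\pi^n$ with that subplane of $\pi^m$. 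As $\pi^n_0 \subseteq \pi^m_0$ as configurations, these identifications are mutually compatible, so we genuinely obtain a chain $\pi^4 \subseteq \pi^5 \subseteq \cdots \subseteq \pi^\omega$ of projective planes, each inclusion being an incidence-structure embedding. By Theorem~\ref{th_elem_substr}, to conclude $\pi^n \preceq \pi^m$ it now suffices to show that $\pi^m$ is HF-constructible over $\pi^n$.

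For this I would exhibit the construction directly, working inside $\pi^m$. Starting from $\pi^n$, adjoin one at a time the points $a_{n-1}, a_n, \dots$ that lie on $\ell$ in $\pi^m_0$ but not in $\pi^n_0$ (finitely many if $m < \omega$, countably many if $m = \omega$, taking unions at limit stages); each such step adjoins a single new point together with its unique incidence to $\ell$ --- no other line of the partial plane built so far passes through it, since in $\pi^m$ the only line of $\pi^n$ through $a_i$ is $\ell$ --- hence is a legitimate HF-step. Let $Q$ be the resulting partial plane; then $\pi^m_0 \subseteq Q \subseteq \pi^m$, so the free completion of $Q$ contains the free completion of $\pi^m_0$ (namely $\pi^m$) and is contained in the complete plane $\pi^m$, hence equals $\pi^m$. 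Running that free completion, each stage only adjoins a joining line $p \vee p'$ of an unjoined pair of points or a meeting point $\ell' \wedge \ell''$ of a parallel pair of lines, each an HF-step adjoining one new element and at most two incidences; linearly ordering all these adjunctions and concatenating with the first phase exhibits $\pi^m$ as HF-constructible over $\pi^n$, so $\pi^n \preceq \pi^m$ by Theorem~\ref{th_elem_substr}. Since this works uniformly for all $n \leq m \leq \omega$, the family $(\pi^n : 4 \leq n \leq \omega)$ is an elementary chain with top element $\pi^\omega$; alternatively, once the finite steps $\pi^n \preceq \pi^{n+1}$ are known, one gets $\pi^n \preceq \pi^\omega$ from the elementary-chain theorem using that $\pi^\omega$ is the direct limit of the finite $\pi^n$.

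The step I expect to require the most care is the middle one: verifying that $Q$ --- the result of throwing the points of $\ell$ missing from $\pi^n$ into the already completed plane $\pi^n$ --- is a legitimate partial plane in the sense of Definition~\ref{def_nwf_cons}, and, above all, that freely completing a free plane after adjoining new points on one of its lines genuinely reproduces the free completion of the enlarged initial configuration, so that the free completion of $Q$ really is $\pi^m$ and nothing larger. The compatibility of the identifications $\pi^n \subseteq \pi^m$, and the fact that each such inclusion preserves and reflects incidence, must also be recorded, but these follow routinely from the monotonicity of the free-completion process; the rest is immediate from the quoted theorems.
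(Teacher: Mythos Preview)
Your proposal is correct and follows exactly the paper's route; the paper's own proof is the one-liner citing Fact~\ref{facts_free_planes}(\ref{item3}) and Theorems~\ref{th_complete}, \ref{th_elem_substr} and \ref{th_strictly_stable}. The concern you raise in your final paragraph is resolved by Proposition~\ref{prop_embed_th}, which you are in fact already invoking when you identify $\langle \pi^n_0 \rangle_{\pi^m}$ with $\pi^n$: under the natural HF-ordering of $\pi^m$ coming from its free construction, $\pi^n_0$ is $cl_<$-closed, and Proposition~\ref{prop_embed_th} then yields simultaneously $\langle \pi^n_0 \rangle_{\pi^m} \cong F(\pi^n_0) = \pi^n$ and $\pi^n = \langle \pi^n_0 \rangle_{\pi^m} \leq_{HF} \pi^m$ --- so the detour through the intermediate configuration $Q$ is unnecessary.
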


	We wish to observe that the question of elementary equivalence of the free projective planes and of the decidability of their theory(ies) was posed as an open question by Shirshov and Nikitin in their book \cite{russian_book}\footnote{We thank Aleksander Iwanow for telling us about this.} (pg. 68, 
last sentence of Section 16).

\smallskip

	We continued our study of the theory of open projective planes focusing on questions of definability, algebraicity, and homogeneity, proving the following theorems.

	\begin{theorem}\label{th_prime_model} The theory $T$ of open projective planes does {\em not} have a prime model.
\end{theorem}

	\begin{theorem}\label{elim_quantifiers} In  $T$ every formula is equivalent to a Boolean combinations of \mbox{$\exists$-formulas} and $T$ is {\em not} model complete \mbox{(so it does not have quantifier elimination).} 
\end{theorem}

	\begin{theorem}\label{th_acl} In models of $T$ algebraic closure can be described (see Corollary~\ref{cor_char_alg_cl}). Further, for every $M \models T$ such that $\pi^4 \preccurlyeq M$ we have that $acl_M \neq dcl_M$.
\end{theorem}

	\begin{theorem}\label{th_type_homogeneity} The free projective plane $\pi^\omega$ is strongly type-homogeneous, i.e. for every tuple $\bar{a}, \bar{b}$ in $\pi^\omega$ and finite set of parameter $A$ in $\pi^\omega$, $\bar{a}$ and $\bar{b}$ have the same type over $A$ if and only if there is $f \in Aut(\pi^\omega)$ mapping $\bar{a}$ to $\bar{b}$ and fixing $A$ pointwise. Furthermore, the same holds for the free projective planes of finite rank if we require in addition that the algebraic closure of $\bar{a}$ over $A$ (equiv. of $\bar{b}$) is non-degenerate.
\end{theorem}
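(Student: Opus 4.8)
The \emph{only if} direction is trivial, since automorphisms preserve types. For the converse it suffices to show that $\pi^\omega$ is $\aleph_0$-homogeneous, the statement for arbitrary tuples then following by a routine back-and-forth; so suppose $\mathrm{tp}(\bar a/A)=\mathrm{tp}(\bar b/A)$ with $A$ finite and $\bar a,\bar b$ finite tuples. I would build $f\in\mathrm{Aut}(\pi^\omega)$ fixing $A$ pointwise with $f(\bar a)=\bar b$ as the union of an increasing $\omega$-chain of finite partial elementary maps $h_0\subseteq h_1\subseteq\cdots$ whose domains $B_k$ and ranges $B_k'$ are finitely generated subplanes that are \emph{elementary} in $\pi^\omega$: by Theorem~\ref{th_elem_substr} these are exactly the finitely generated subplanes over which $\pi^\omega$ is HF-constructible, and (a separate lemma, needed already for Theorem~\ref{th_strictly_stable}) the closure $\mathrm{cl}(X)$ of a finite set $X$ in this sense is always a finitely generated free subplane. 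For the base case, since $\mathrm{tp}(\bar a/A)=\mathrm{tp}(\bar b/A)$ there is an automorphism of the monster model fixing $A$ pointwise and taking $\bar a$ to $\bar b$; its restriction to $\mathrm{cl}(A\bar a)$ is a partial elementary map $h_0\colon\mathrm{cl}(A\bar a)\to\mathrm{cl}(A\bar b)$ of $\pi^\omega$ fixing $A$ pointwise and sending $\bar a$ to $\bar b$, with finitely generated free domain and range.

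Given $h_k\colon B_k\to B_k'$ as above and a new element $c\in\pi^\omega$, the extension step asks for $c'\in\pi^\omega$ such that $h_k\cup\{(c,c')\}$ is again partial elementary; once such a $c'$ is found we put $B_{k+1}=\mathrm{cl}(B_k c)$ and $B_{k+1}'=\mathrm{cl}(B_k' c')$ --- finitely generated free subplanes elementary in $\pi^\omega$ --- and extend $h_k\cup\{(c,c')\}$ to a partial elementary map $B_{k+1}\to B_{k+1}'$ (for instance by restricting an automorphism of the monster model and using that $\mathrm{cl}$ is invariant under automorphisms). Alternating this with the symmetric step that forces a prescribed element of $\pi^\omega$ into the range, and enumerating $\pi^\omega$, the union $f=\bigcup_k h_k$ is then an automorphism of $\pi^\omega$ with the required properties. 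The point of working with elementary finitely generated subplanes is that an isomorphism between two of them agreeing with $h_k$ is \emph{automatically} partial elementary, being an isomorphism between elementary substructures of $\pi^\omega$; so to carry out the extension step it is enough to realize inside $\pi^\omega$ a copy of $\mathrm{cl}(B_k c)$ over $B_k'$ (via $h_k$) that is again elementary in $\pi^\omega$.

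Everything thus reduces to the extension step, that is, to realizing over $B_k'$ the $h_k$-image of $\mathrm{tp}(c/B_k)$ inside $\pi^\omega$; this is where the distinguished position of $\pi^\omega$ among the countable models of $T$ is essential, since $\pi^4$, for instance, is not $\aleph_0$-homogeneous and so no purely abstract stability-theoretic argument can suffice. By Theorem~\ref{th_elem_substr} the finitely generated free subplane $\mathrm{cl}(B_k c)$ is HF-constructible over $B_k$, and such a construction consists of finitely many \emph{forced} steps (adjoining the join of two present points, or the meet of two present lines) interleaved with \emph{free} steps (adjoining a point or line subject to at most one prescribed incidence, in effect a fresh generator over the current subplane). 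I would transport this construction through $h_k$ into $\pi^\omega$: the forced steps are executed for free, since in the projective plane $\pi^\omega$ the relevant joins and meets already exist, while the free steps can be executed because $\pi^\omega$, being the free plane on an infinite generic configuration, contains over any finitely generated subplane arbitrarily many elements that are free over it. The main obstacle, which I expect to require the full combinatorial input behind Theorems~\ref{th_complete} and~\ref{th_elem_substr}, and in particular the openness of $\pi^\omega$ (the absence of confined configurations), is to choose these fresh generators so that the outcome of the transport is not merely a subplane isomorphic to $\mathrm{cl}(B_k c)$ over $h_k$ but one that is again \emph{elementary} in $\pi^\omega$, and to verify that the finitely many HF-construction types that can arise are indeed all realized over every finitely generated free subplane of $\pi^\omega$.
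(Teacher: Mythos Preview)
Your back-and-forth strategy is the right overall shape, but the paper takes a different and sharper route that bypasses exactly the obstacle you identify at the end.  Rather than running a back-and-forth between infinite (finitely generated) elementary subplanes, the paper recognises $\pi^\omega$ as the Fra\"iss\'e--Hrushovski generic for the class $(\mathbf{K},\leq_{HF})$ of \emph{finite} open partial planes with the HF-ordering relation (Proposition~\ref{prop_generic}).  The homogeneity of the generic (Fact~\ref{fact_generic}) then says: any isomorphism between finite $A,B\leq^*_{HF}\pi^\omega$ extends to an automorphism.  What remains is to show (Lemma~\ref{lemma_generic}) that tuples with the same type over a finite parameter set can be enlarged to such finite strong substructures $A,B$ with a suitable isomorphism; this is done by passing to $acl(\bar aB_0)\subseteq\pi^\omega$, cutting out a finite $A$ with $\bar aB_0\subseteq A\leq_{HF} acl(\bar aB_0)$ using a well-founded HF-order, and transporting via an automorphism of the monster.

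The advantage of the paper's approach is that the ``extension step'' you flag as the main obstacle is absorbed entirely into the standard theory of Fra\"iss\'e generics: richness is established once (Lemma~\ref{lemma_sharp}, amalgamation) and one never has to realise types over infinite sets inside $\pi^\omega$.  Your approach, by contrast, leaves this step essentially open: you need, for each finite configuration of forced and free HF-steps over an elementary subplane $B_k'$, to produce a realisation inside $\pi^\omega$ that is again $\leq_{HF}$-strong, and you rightly suspect this needs the full combinatorial input.  A second, smaller issue is that your closure $\mathrm{cl}(X)$ is not clearly defined: if you mean $acl(X)$ then it is indeed elementary in $\pi^\omega$ (Lemma~\ref{lemma_acl}), but its finite generation is unknown (the paper lists describing $acl$ as an open problem); if you mean $\langle X\rangle$ then $\pi^\omega$ need not be HF-constructible over it.  The paper sidesteps this by working with finite partial planes rather than subplanes.
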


	Although our focus in the present study was {\em not} on the stability theory of open projective planes, the techniques of proof of Theorems~\ref{th_complete} and \ref{th_elem_substr} allowed us to characterize completely the stability class of the theory of open projective planes:

	\begin{theorem}\label{th_strictly_stable} The theory $T$ of open projective planes is strictly stable. Furthermore, for every infinite cardinality $\kappa$ there are $2^\kappa$ non-isomorphic open projective planes of power $\kappa$.
\end{theorem}

	Recall that previous to our work it was only known that there are $\aleph_0$-many countable non-free open projective planes \cite{kelly_article}. With our methods we were  also able to prove that the natural notion of amalgamation between free projective planes (cf. Definition \ref{def_canonical_amalgam}) corresponds exactly with the notion of independence witnessing the stability of our theory $T$ (i.e. the non-forking independence relation).

	\begin{theorem}\label{th_forking} Let $\mathfrak{M}$ be the monster model of $T$, and $A, B, C \subseteq \mathfrak{M}$. Then $B \pureindep[A] C$~(in the sense of non-forking) if and only if  $acl(ABC)$ is the {\em canonical amalgam} of $acl(AB)$ and $acl(AC)$ over $acl(A)$ (cf. Definition \ref{def_canonical_amalgam}).
\end{theorem}
	
	As mentioned at the beginning, notions of free and open object appear also in many other contexts in combinatorial geometry, most notably in the theory of generalized $n$-gons (for a general study of these phenomena see \cite{funk1, funk2}). We believe that behind our solutions to the main problems faced here there is a whole theory yet to be discovered, on which we intend to return \mbox{on a more general future work.}
	
	We conclude this introduction mentioning other works on incidence structures and model theory. Recently, there have been a number of papers where free constructions (and similia) played an important role, see in particular \cite{muller, muller_tent, tent1, tent2}. These papers, which are certainly interesting, do not seem to have any direct connection with our work, which relies on very different techniques and ideas, most notably the newly introduced notion of HF-construction mentioned several times above. In particular, it was suggested to us that the results from \cite{muller, muller_tent} could help us in our model-theoretic analysis of free projective planes, in particular with respect to the question of stability. 
	However we do not think that the methods from \cite{muller, muller_tent} are of particular relevance to the question of stability of $T$. In fact, by Proposition~\ref{no_disjoint_amalgamation}, the class of finitely generated open projective planes is not closed under free amalgamation, and, by Proposition~\ref{acl_neq_dcl}, all the free projective planes are such that $acl \neq dcl$, while $acl = dcl$ in all the structures admitting a stationary independence relation (which is the framework of \cite{muller}), as observed explicitly in \cite[Fact~3.4]{muller}. It follows readily that the stationary
independence studied in \cite{muller} does {\em not} correspond to non-forking in open projective planes.

	
	The structure of the paper is as follows: in Section~\ref{sec_state_art} we introduce the main definitions and overview the relevant state of the art; in Section~\ref{sec_HF} we introduce the crucial notion of HF-constructibility and prove Theorem~\ref{theorem_characterization}; in Section~\ref{sec_axiomatization} we prove Theorem~\ref{th_complete}; in Section~\ref{sec_elem_sbs} we prove Theorem~\ref{th_elem_substr}; in Section~\ref{sec_prime_models} we prove Theorem~\ref{th_prime_model}; in Section~\ref{sec_alg_cl} we prove Theorems~\ref{elim_quantifiers}~and~\ref{th_acl}; in Section~\ref{sec_homogeneo} we prove  Theorem~\ref{th_type_homogeneity}; in Sections~\ref{sec_unsuper}~and~\ref{sec_stability} we prove Theorems~\ref{th_strictly_stable}~and~Theorem~\ref{th_forking}.
	

\section{State of the Art}\label{sec_state_art}

	\begin{definition}[{\cite{hall_proj}}]\label{def_plane} A {\em partial plane} is a system of points and lines satisfying:
	\begin{enumerate}[(A)]
	\item through any two distinct points $p$ and $p'$ there is at most one line $p \vee p'$;
	\item any two distinct lines $\ell$ and $\ell'$ intersect in at most one point $\ell \wedge \ell'$.
\end{enumerate}
We say that a partial plane is a {\em projective plane} if in (A)-(B) above we replace ``at most'' with ``exactly one''. We say that a projective plane is non-degenerate if it contains a quadrangle, i.e. four points such that no three of them are collinear.
\end{definition}

	\begin{convention}\label{deg_conv} All the projective planes considered in this paper will be assumed to be non-degenerate (cf. Definition \ref{def_plane}). In specific claims we might diverge from this convention; when so, we will specify this divergence explicitly.
\end{convention}

	\begin{definition}\label{subconf} Let $P$ be a partial plane.
	\begin{enumerate}[(1)]
	\item We say that the partial plane $P'$ is a {\em subconfiguration} of $P$ if $P' \subseteq P$, points of $P'$ are points of $P$, lines of $P'$ are lines of $P$, and, for $p$ and $\ell$ in $P'$, $p$ in incident with $\ell$ in $P'$ if and only if $p$ in incident with $\ell$ in $P$.
	\item We say that $P'$ is a {\em closed} subconfiguration of $P$ if $P'$ is a subconfiguration of $P$ and $P'$ is closed under intersection of lines and join of points.
	\end{enumerate}
\end{definition}
	
	\begin{definition}\label{subplane} Let $P$ be a projective plane and $A \subseteq P$.
	\begin{enumerate}[(1)]
	\item We denote by $\langle A \rangle_P$ the smallest closed subconfiguration of $P$ containing $A$, and call it the closed subconfiguration generated by $A$ in $P$.
	\item If $A$ contains a quadrangle, then $\langle A \rangle_P$ is a projective plane. In this case we refer to $\langle A \rangle_P$ as the projective subplane (or simply subplane) \mbox{generated by $A$ in $P$.}
	\end{enumerate}
\end{definition}

	\begin{remark}\label{remark_nondeg} In Definition~\ref{subplane}, notice that $A$ contains a quadrangle iff $\langle A \rangle_P$ contains a quadrangle iff $\langle A \rangle_P$ is a projective plane (cf. Convention~\ref{deg_conv}).
\end{remark}

	\begin{definition}[{\cite{hall_proj}}]\label{free_extension} Given a partial plane $P$ we define a chain of partial planes $(P_n : n < \omega)$, by induction on $n < \omega$, as follows:
\newline $n = 0)$. Let $P_n = P$.
\newline $n = 2k +1)$. For every pair of distinct points $p, p' \in P_{2k}$ not joined by a line add a new line $p \vee p'$ to $P_{2k}$ incident \mbox{with only $p$ and $p'$. Let $P_n$ be the resulting plane.}
\newline $n = 2k >0)$. For every pair of parallel lines $\ell, \ell' \in P_{2k-1}$ add a new point $\ell \wedge \ell'$ to $P_{2k-1}$ incident \mbox{with only $\ell$ and $\ell'$. Let $P_n$ be the resulting plane.}
\newline We define the {\em free projective extension} of $P$ to be $F(P) : = \bigcup_{n < \omega} P_n$.
\end{definition}

	\begin{notation}\label{pi_n} Given $4 \leq n \leq \omega$, we let $\pi_0^n$ be the partial plane consisting of a line $\ell$, $n-2$ points on $\ell$ and $2$ points off of $\ell$, and we let $\pi^n = F(\pi^n_0)$ (cf. Definition~\ref{free_extension}). We refer to the plane $\pi^n$, for $4 \leq n \leq \omega$, as the free projective plane of rank $n$. We say that a plane is free if it is isomorphic to $\pi^n$ for some $4 \leq n \leq \omega$.
\end{notation}

	\begin{fact}[{\cite[Theorems 4.5 and 4.12]{hall_proj}}]\label{fact_noniso} Let $4 \leq n < m \leq \omega$. Then:
\begin{enumerate}[(1)]
	\item $\pi^n$ is not isomorphic to $\pi^m$;
	\item $\pi^n$ contains a copy of $\pi^m$.
\end{enumerate}
\end{fact}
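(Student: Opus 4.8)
The plan is to treat the two assertions by quite different means: $(1)$ is a rigidity statement, to be proved by exhibiting an isomorphism invariant of $\pi^n$ that recovers $n$, while $(2)$ is an explicit embedding construction carried out inside $\pi^n$. For $(1)$ I would use the \emph{rank} of a projective plane, namely the least cardinality of a generating set $A$ (one with $\langle A\rangle$ the whole plane). The upper bound $\operatorname{rank}(\pi^n)\le n$ is immediate from the definitions: $\pi^n_0$ has exactly $n$ points, it contains a quadrangle, and $\langle\pi^n_0\rangle_{\pi^n}=F(\pi^n_0)=\pi^n$ (for $n=\omega$ this just records that $\pi^\omega$ is $\aleph_0$-generated). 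The work is the matching lower bound: no subset of $\pi^n$ of size $<n$ generates it, and $\pi^\omega$ is not finitely generated. Here I would run the free extension in reverse: each step of $F(-)$ introduces an element incident with \emph{exactly} the two elements being joined or met, and, correspondingly, every element of a closure $\langle A\rangle$ is produced from $A$ by a finite chain of such steps. A bookkeeping (discharging) argument over these steps, applied to the incidence-deficient part of $\pi^n_0$ --- the $n-2$ points lying on the single line $\ell$ only, together with the two points off $\ell$ --- then forces any generating set to have size at least $n$; for $\pi^\omega$ one notes in addition that a finite $A\subseteq\pi^\omega$ lies in some finite stage $P_N$ of the construction of $\pi^\omega$ from $\pi^\omega_0$, and closing up $P_N$ cannot recover all of the infinitely many points that $\ell$ carries independently in $\pi^\omega_0$. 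This is Hall's Theorem~4.5 in \cite{hall_proj}, whose combinatorial analysis I would follow; the same statement also falls out more structurally from the HF-constructibility framework of this paper and \cite{sieben}, since ``reversing $F(-)$'' is exactly the de-construction of an HF-construction over $\emptyset$.

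For $(2)$ the key observation is that in $\pi^n$ the line $\ell$ of the base $\pi^n_0$ gains infinitely many points during the formation of $F(\pi^n_0)$: already at the second stage the line $p\vee p'$ joining the two off-$\ell$ points of $\pi^n_0$ is parallel to $\ell$, so the point $\ell\wedge(p\vee p')$ is adjoined to $\ell$, and further points keep being adjoined to $\ell$ at every even stage. Now let $Q$ be the sub-partial-plane of $\pi^n$ consisting of the line $\ell$, all $n-2$ of its points coming from $\pi^n_0$, a further $m-n$ of the points that the construction adjoins to $\ell$ (all of them, countably many, when $m=\omega$), and the two points of $\pi^n_0$ off $\ell$; then $Q\cong\pi^m_0$. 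It remains to see that $\langle Q\rangle_{\pi^n}\cong F(Q)\cong\pi^m$. One builds the obvious stagewise map $F(Q)\to\langle Q\rangle_{\pi^n}$ fixing $Q$ and checks it is a bijection: surjectivity is clear, since the closure is the limit of the joining/meeting process, and the point is injectivity, i.e.\ that closing $Q$ up inside $\pi^n$ never identifies two formally distinct joins or meets of the free extension, nor lets a newly adjoined line or point acquire an unintended incidence. Crucially, this is \emph{not} automatic for an arbitrary copy of $\pi^m_0$ inside $\pi^n$ (such a copy can well sit inside a $\pi^{m-1}$-subplane), which is exactly why one takes $Q$ to be built from $\pi^n_0$ and points adjoined to $\ell$ \emph{freely} by the construction; injectivity then follows from $\pi^n$ being open, via the standard ``freeness over a subconfiguration'' lemma. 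This is Hall's Theorem~4.12 in \cite{hall_proj}.

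The main obstacle in both parts is the same, and it is genuinely geometric rather than formal: controlling which finite subconfigurations can occur inside a free plane --- for $(1)$ to obtain the rank lower bound, and for $(2)$ to exclude collapses when forming $\langle Q\rangle_{\pi^n}$. Everything else is bookkeeping over the stages of $F(-)$. I would expect the most transparent route to both, in the language of the present paper, to go through HF-constructibility and Theorem~\ref{th_elem_substr}: once one knows that $\langle Q\rangle_{\pi^n}$ is HF-constructible over $Q$ and that an HF-construction over a copy of $\pi^m_0$ produces $\pi^m$, part $(2)$ is immediate, while the deconstruction viewpoint supplies the uniqueness of the base configuration needed for $(1)$.
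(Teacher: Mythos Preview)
The paper does not prove this statement at all: it is recorded as a \emph{Fact} with a citation to Hall and no proof is given. So there is no ``paper's proof'' to compare against beyond Hall's original arguments, which your sketch for (1) follows faithfully enough (rank as an isomorphism invariant, with the lower bound obtained by reversing the free-extension process).

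Your argument for (2), however, has a genuine gap. You take $Q$ to contain the line $\ell$, \emph{all} $n-2$ of its $\pi^n_0$-points, the two off-$\ell$ points of $\pi^n_0$, and then some further points on $\ell$. In particular $\pi^n_0\subseteq Q$, whence
\[
\langle Q\rangle_{\pi^n}\ \supseteq\ \langle \pi^n_0\rangle_{\pi^n}\ =\ \pi^n,
\]
so $\langle Q\rangle_{\pi^n}=\pi^n$. Your claimed isomorphism $\langle Q\rangle_{\pi^n}\cong F(Q)\cong\pi^m$ would therefore yield $\pi^n\cong\pi^m$, contradicting the very part (1) you just established. Concretely, the collapse you warn about \emph{does} occur for your $Q$: each extra point you adjoin to $\ell$ is, by construction, of the form $\ell\wedge\ell'$ with $\ell'\in\langle\pi^n_0\rangle$, so the natural map $F(Q)\to\langle Q\rangle_{\pi^n}$ sends both that ``base'' point of $Q$ and the stage-two element $\ell\wedge\ell'$ of $F(Q)$ to the same element of $\pi^n$.

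The fix is exactly what your final paragraph gestures at but your explicit $Q$ fails to implement: one must choose a copy of $\pi^m_0$ that is $cl_<$-closed for some HF-ordering of $\pi^n$ (equivalently, $\leq_{HF}$-embedded), so that Proposition~\ref{prop_embed_th} applies and $\langle Q\rangle_{\pi^n}\cong F(Q)$. A configuration containing all of $\pi^n_0$ can never do this for $m>n$; Hall instead produces a genuinely new $\pi^{n+1}_0$-configuration inside $\pi^n$ (using elements from later stages in place of some of the base elements, not in addition to them) and iterates.
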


	\begin{definition}\label{open} Let $P$ be a partial plane.  We say that $P$ is {\em open} if there is no {\em finite} subconfiguration $A$ of $P$ such that every element of $A$ is incident with at least three elements of $A$.
\end{definition}

\begin{fact}\label{facts_free_planes} Let $P$ be a countable projective plane. Then (recalling Definitions \ref{subplane} and \ref{pi_n}) we have:
	\begin{enumerate}[(1)]
	\item if $P$ is free and $P'$ is a subplane of $P$, then $P'$ is free (cf. \cite[Theorem I]{sieben});
	\item\label{item3} if $P$ is free, then $P$ is open (cf. \cite[Theorem 4.8]{hall_proj} and \cite[Theorem 2]{sieben});
	\item if $P$ is open and finitely generated, then $P$ is free (cf. \cite[Theorem 4.8]{hall_proj}); 
	\item $P$ is open if and only if every finitely generated subplane of $P$ is free (cf. \cite{dem}).
\end{enumerate}
\end{fact}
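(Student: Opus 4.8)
The plan is to treat the four items in the order (2), (3), (4), (1): items (2) and (3) go back to Hall and can be carried out essentially from scratch, item (4) is then a formal consequence of them, and item (1) is the one genuinely hard input, for which I would reduce to Siebenmann's characterization of well-founded HF-constructibility recalled in the Introduction.

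For item (2), since openness is an isomorphism invariant, it suffices to show that each $\pi^n=F(\pi^n_0)$ is open. First, $\pi^n_0$ is open: its only line is $\ell$ and each of its points lies on at most that one line, so a nonempty subconfiguration in which every element had degree $\geq 3$ is impossible (if nonempty it either contains a point, necessarily of degree $\leq 1$ there, or equals $\{\ell\}$, of degree $0$). Now suppose towards a contradiction that $A$ is a nonempty finite subconfiguration of $\pi^n$ in which every element is incident, inside $A$, with at least three elements of $A$. Writing $(P_k:k<\omega)$ for the chain of Definition~\ref{free_extension} built from $\pi^n_0$, let $m$ be least with $A\subseteq P_m$; this exists because $A$ is finite and $\pi^n=\bigcup_k P_k$. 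If $m=0$ we are done by the previous sentence; if $m\geq 1$, minimality of $m$ yields some $x\in A\cap(P_m\setminus P_{m-1})$, and by construction $x$ is incident in $P_m$, hence in $A$, with exactly its two ``parents'', contradicting $\deg_A(x)\geq 3$.

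For item (3), let $P$ be open with $P=\langle S\rangle_P$ (Definition~\ref{subplane}) for finite $S$; enlarging $S$ I may assume it contains a quadrangle, so that $P$ is a projective plane. I would run the closure process $P_0:=S$ with its induced incidences (a finite partial plane) and $P_{k+1}:=P_k$ together with $p\vee p'$ for all pairs of distinct points of $P_k$ and $\ell\wedge\ell'$ for all pairs of distinct lines of $P_k$, the joins and meets computed in $P$ (they exist since $P$ is a projective plane); then $\bigcup_k P_k=\langle S\rangle_P=P$. The crucial point, and the only place openness enters, is that this is a \emph{free} construction: at every stage each join (resp.\ meet) that is adjoined is genuinely new and is incident, among the elements constructed so far, with \emph{only} the two points (resp.\ lines) producing it. I would prove this by induction on $k$: a failure would be a first ``collision'', and closing it under the ancestors of the elements involved (tracing their births back through the $P_j$) produces a finite subconfiguration of $P$ in which every element has degree $\geq 3$, contradicting openness. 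Granting this, $P\cong F(P_0)$; a final elementary (but not quite trivial) bookkeeping step — deleting the superfluous elements of $P_0$ and absorbing the finitely many incidences already present in it — identifies $F(P_0)$ with $\pi^n$ for some finite $n\geq 4$, so $P$ is free.

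For item (4): if $P$ is open then so is every subconfiguration $B$ of $P$, since a finite confined subconfiguration of $B$ would also be one of $P$; in particular every finitely generated subplane of $P$ is open, hence free by (3). Conversely, if $P$ were not open, fix a finite confined subconfiguration $A$ and a quadrangle $Q\subseteq P$; then $\langle A\cup Q\rangle_P$ is a finitely generated subplane containing $A$ as a subconfiguration, hence free by hypothesis, hence open by (2), contradicting that $A$ is confined in it. Finally, item (1) is the deep case, and I would follow Siebenmann. If $P'$ is a subplane of the countable free plane $P\cong\pi^n$, then $P$ is open by (2) and hence $P'$ is open by the argument just given. The plan is to show that $P'$ is \emph{well-foundedly} HF-constructible over $\emptyset$ — built element by element along a well-founded order, adjoining at most two incidences at a time — and then to invoke Siebenmann's theorem (recalled in the Introduction) that the countable planes well-foundedly HF-constructible over $\emptyset$ are exactly the free ones. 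To produce such a construction one starts from the ambient free construction of $P$, which equips every element with a well-founded birth rank, and transfers it to $P'$. The hard part will be that the restriction to $P'$ of the ambient construction need not add incidences two at a time — an element of $P'$ may receive its ambient parents \emph{outside} $P'$ while simultaneously acquiring several incidences inside $P'$ — so the construction must be reorganized, using the openness of $P'$, into a genuine well-founded HF-construction internal to $P'$; I would import this reorganization from \cite{sieben}.
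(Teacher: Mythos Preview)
The paper does not prove this statement at all: it is recorded as a \emph{Fact} with pointers to \cite{hall_proj}, \cite{sieben}, \cite{dem}, and no argument is supplied. So there is no ``paper's own proof'' to compare against beyond those citations, and your write-up goes well beyond what the paper does.

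Your arguments for (2) and (4) are clean and correct, and for (1) you rightly identify the hard content and defer to Siebenmann, which is exactly what the paper does.

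Your sketch for (3), however, has a genuine gap. The key claim --- that at every stage each newly adjoined join or meet is incident, among the elements already constructed, with \emph{only} the two elements producing it --- is false as stated, and your proposed justification (``a first collision, closed under ancestors, yields a confined configuration'') does not work. Concretely: take $S$ containing three points $p,p',q$ that happen to be collinear in $P$ but with their common line $\ell\notin S$. At stage~$1$ you adjoin $\ell=p\vee p'$, and it is already incident with three elements $p,p',q$ of $P_0$. Tracing ancestors gives only $\{p,p',q,\ell\}$, in which $\ell$ has degree~$3$ but each of $p,p',q$ has degree~$1$; this is \emph{not} confined, so openness of $P$ is not contradicted and your inductive step fails at the base. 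Relatedly, the ``final bookkeeping'' identifying $F(P_0)$ with some $\pi^n$ is not elementary bookkeeping: it is essentially Siebenmann's characterization (that any countable plane well-foundedly HF-constructible from $\emptyset$ is free), which you already invoke for item~(1). A cleaner route to (3) is to bypass the $P\cong F(P_0)$ claim entirely: each $P_k$ is a finite open partial plane, hence HF-constructible from $\emptyset$; one then argues (with more care than you indicate) that the $P_k$ assemble into a well-founded HF-construction of $P$ from $\emptyset$ and applies Siebenmann directly.
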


	
	\begin{definition}\label{HF_def} Let $P$ be a partial plane and $P+x$ a partial plane containing $P$ such that $x \notin P$ and $P+x = P \cup \{ x \}$. We say that $P+x$ is a hyper-free (abbreviated as HF) one-point extension of $P$ if $x$ is incident with at most two elements of $P$. We say that $P+x$ is of type $i$, for $i = 0, 1, 2$, if in $P+x$ the element $x$ is incident with exactly $i$ elements of $P$. We denote this type as $t(P+x/P)$.
\end{definition}

\begin{definition}\label{HF_construct_def} Let $Q$ and $P$ be partial planes. We say that $P$ is well-foundedly HF-constructible\footnote{In standard references (cf. e.g. \cite{sieben}) these constructions are referred to simply as HF-constructions (without the specification ``well-founded''), the reason for our choice of terminology is because of our more general Definition~\ref{def_nwf_cons}, see also Remark~\ref{remark_well_founded}}  from $Q$ (or over $Q$), denoted as $Q \leq^*_{HF} P$, if there is an ordinal $\alpha$ and a sequence $(P_\beta)_{\beta < \alpha}$ of partial planes such that:
	\begin{enumerate}[(1)]
	\item $P_0 = Q$;
	\item if $\beta = \gamma+1$, then $P_{\beta}$ is a hyper-free one-point extension of $P_\gamma$ (cf. Def.~\ref{HF_def});
	\item if $\beta$ is limit, then $P_\beta = \bigcup_{\gamma < \beta} P_\gamma$;
	\item $P = \bigcup_{\beta < \alpha} P_\beta$.
\end{enumerate}
We say in addition that $P$ is F-constructible from $Q$ if in the sequence $(P_\beta)_{\beta < \alpha}$ we have that $t(P_{\beta+1}/P_\beta) = 2$ (cf. Definition \ref{HF_def}), for every $\beta < \alpha$.
\end{definition}

	\begin{fact}[{\cite[Lemma 1]{sieben}}]\label{open_implies_constr} If $P$ is a finite open partial plane, then $P$ is (well-foundedly) HF-constructible from $\emptyset$ (cf. Definition \ref{HF_construct_def}). 
\end{fact}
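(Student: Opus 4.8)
The plan is to prove the statement by induction on $|P|$, the (finite) number of elements of $P$. The base case $P = \emptyset$ is immediate: the one-term sequence $(P_0)$ with $P_0 = \emptyset$ (so $\alpha = 1 \leq \omega$) witnesses that $\emptyset$ is well-foundedly HF-constructible from $\emptyset$ in the sense of Definition~\ref{HF_construct_def}.

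For the inductive step I would assume $P \neq \emptyset$ is open and the statement holds for all open partial planes with fewer elements. The first key observation is that the definition of openness, applied to the subconfiguration $A = P$ itself, produces an element $x \in P$ incident with at most two elements of $P$: otherwise every element of the finite subconfiguration $P$ of $P$ would be incident with at least three elements of it, contradicting openness. Let $P' = P \setminus \{x\}$ be the subconfiguration of $P$ on the remaining elements; this is again a partial plane, since axioms (A)–(B) of Definition~\ref{def_plane} are inherited by subconfigurations. The second routine point is that $P'$ is open: any finite subconfiguration $A$ of $P'$ is also a subconfiguration of $P$ with the same incidence relation, so a ``bad'' $A$ inside $P'$ would already contradict the openness of $P$.

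Now I would invoke the induction hypothesis on $P'$ (which has $|P|-1$ elements) to get a sequence $(P_k)_{k < \alpha}$ with $P_0 = \emptyset$, each $P_{k+1}$ a hyper-free one-point extension of $P_k$, and $\bigcup_{k<\alpha} P_k = P'$; since $P'$ is finite we may take $\alpha = |P'|+1$, so $P_{\alpha-1} = P'$. Finally I append $x$: set $P_{\alpha} := P' \cup \{x\} = P$, equipped with the incidences that $x$ has in $P$. This is a one-point extension of $P_{\alpha-1} = P'$, and it is hyper-free because $x$ is incident with at most two elements of $P$, hence with at most two elements of $P' \subseteq P$. Thus $(P_k)_{k \leq \alpha}$ witnesses that $P$ is well-foundedly HF-constructible from $\emptyset$, completing the induction.

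I do not expect any genuine obstacle here beyond definitional bookkeeping: the whole content is that the definition of openness hands us, at each stage, a legal ``last point'' to adjoin, together with the equally elementary fact that deleting such a point from an open plane leaves an open plane. The qualifier \emph{well-foundedly} costs nothing, since the construction sequence produced has finite length $|P|+1 < \omega$; the contrast with the more general notion of Definition~\ref{def_nwf_cons} only matters once $P$ is allowed to be infinite.
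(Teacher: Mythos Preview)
Your argument is correct and is exactly the standard proof of this fact. Note that the paper does not give its own proof here: the statement is recorded as a \emph{Fact} with a citation to \cite[Lemma~1]{sieben}, so there is nothing in the paper to compare against beyond observing that your induction-on-$|P|$ argument (find an element of degree $\leq 2$, remove it, recurse) is precisely the proof one finds in the cited reference.
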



%

	\begin{fact}[{\cite[Main Theorem]{sieben} and \cite[Chapter~1]{kelly}}]\label{sieben_fact} Let $P$ be a countable projective plane. Then $P$ is a free projective plane if and only if there is a countable ordinal $\alpha$ and a sequence of partial planes $(P_\beta)_{\beta < \alpha}$, such that:
	\begin{enumerate}[(1)]
	\item $P_0 = \emptyset$;
	\item if $\beta = \gamma+1$, then $P_{\beta}$ is a hyper-free one-point extension of $P_\gamma$ (cf. Def.~\ref{HF_def});
	\item if $\beta$ is limit, then $P_\beta = \bigcup_{\gamma < \beta} P_\gamma$;
	\item $P = \bigcup_{\beta < \alpha} P_\beta$.
\end{enumerate}
Furthermore, if $(P_\beta)_{\beta < \alpha}$ is as above, then the rank of the free plane $P$ is: $$(\sum_{\beta < \alpha} 2 - t(P_{\beta+1}/P_\beta)) - 4.$$
\end{fact}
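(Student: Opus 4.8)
The plan is to reprove Siebenmann's characterisation \cite{sieben}, together with Kelly's rank formula \cite[Chapter~1]{kelly}, splitting into the two directions. \emph{Free planes are HF-constructible, with the stated count.} Fix $4\le n\le\omega$. First build $\pi^n_0$ by HF moves: adjoin a line $\ell$ incident with nothing (type $0$); adjoin the $n-2$ points of $\pi^n_0$ lying on $\ell$, one at a time, each incident only with $\ell$ (type $1$); adjoin the two points off $\ell$ (type $0$). Then realise the free extension $F(\pi^n_0)=\pi^n$ one element at a time, processing the stages of Definition~\ref{free_extension} in order, and, when $n=\omega$, dovetailing this with the previous list so that the union is all of $\pi^\omega$. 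At the instant each join $p\vee p'$ or meet $\ell\wedge\ell'$ is adjoined it is incident with exactly the two elements it joins or meets, so every such move has type $2$. Then conditions (1)--(3) hold, and $\sum_k\bigl(2-t(P_{k+1}/P_k)\bigr)$ equals $6$ from the type-$0$ moves plus $n-2$ from the type-$1$ moves plus $0$ from the type-$2$ moves, i.e.\ $n+4$, so the displayed formula returns $n$.

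\emph{HF-constructible projective planes are free.} Conversely, start from a sequence $(P_k)_{k<\omega}$ satisfying (1)--(3) with $P=\bigcup_k P_k$ a non-degenerate projective plane. First, $P$ is open: a finite subconfiguration $A$ lies in some $P_{k+1}$, and the element adjoined at the least such step is incident with at most two elements of $P_k$, hence with at most two elements of $A$. Second, I would record two properties of the free-extension operator: $F(R)=F(Q)$ whenever $Q\subseteq R\subseteq F(Q)$ (by monotonicity and $F(F(Q))=F(Q)$); and, for a type-$0$ or type-$1$ HF extension $R+x$, one has $F(R+x)=F(F(R)+x)$, whereas $F(R+x)=F(R)$ for a type-$2$ extension. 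Using these I would normalise the construction: scanning it from the bottom, whenever an essential (type $0$ or $1$) move over the current partial plane $R$ is reached, first replace $R$ by $F(R)$, absorbing the intervening type-$2$ moves. This leaves $\sum_k\bigl(2-t(P_{k+1}/P_k)\bigr)$ unchanged (type-$2$ moves contribute $0$) and reduces matters to a construction in which each essential step adjoins a single element to a free plane.

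The heart of the argument is then an additivity lemma, proved by induction on the number of essential moves performed so far: if $F(Q)$ is a non-degenerate projective plane of rank $r$ and $Q+x$ is a type-$i$ HF extension, then $F(Q+x)$ is a non-degenerate projective plane of rank $r+2-i$. Combined with a direct analysis of the finitely many initial (still degenerate) stages --- where the rank first becomes $4$ exactly when the running total of $2-t$ reaches $8$ --- this identifies $P$ as the free projective plane of rank $\bigl(\sum_k(2-t(P_{k+1}/P_k))\bigr)-4$; in particular that sum does not depend on the chosen construction, as it must not, since $\pi^n\not\cong\pi^m$ for $n\ne m$.

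The main obstacle will be the additivity lemma in the cases $i\le 1$: one must show that adjoining a genuinely new point to a line of a free plane --- dually, a new line through a point --- again produces a free plane, of rank exactly one higher. The subtlety is the rigidity of free planes: one cannot simply transport the distinguished line of a copy of $\pi^r_0$ onto the line carrying the new point, so a finer argument, keeping track of how deep into the free-extension process that line was produced, is needed. This is exactly where well-foundedness of the construction is used, and exactly what fails for the open non-free planes of \cite{kope,kelly}, which admit only non-well-founded HF-constructions.
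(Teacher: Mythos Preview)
Note first that the paper does not supply its own proof of this statement: it is recorded as a \emph{Fact} with citations to \cite{sieben} and \cite[Chapter~1]{kelly}, so there is no in-paper argument to compare against. Your forward direction is correct and complete, including the arithmetic yielding $n+4$ and hence rank $n$.

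The backward direction, however, is only an outline with the essential step missing. You correctly reduce to the additivity lemma --- that $F(\pi^r + x)$ has rank $r+2-i$ when $x$ is adjoined with type $i\le 1$ --- and you correctly flag this as the main obstacle, but you do not prove it, and your final paragraph is really a description of \emph{why} it is hard rather than a resolution. This lemma is genuinely nontrivial: for $i=1$, one must show that adjoining a point to an \emph{arbitrary} line $\ell$ of $\pi^r$ (not just the distinguished line of $\pi^r_0$) yields $\pi^{r+1}$, and free planes are not transitive on lines, so there is no cheap symmetry argument. Siebenmann's route in \cite{sieben} is different in spirit: rather than an inductive additivity statement, he proves an \emph{exchange}/invariance result showing that any two well-founded HF-constructions of the same plane yield the same value of $\sum_k(2-t_k)$, and then compares with the canonical construction of $\pi^n$. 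If you wish to pursue your additivity approach, you will need either an explicit basis-exchange argument (moving the distinguished line onto $\ell$ at the cost of finitely many type-$2$ rearrangements) or to import Siebenmann's invariance lemma wholesale --- in which case the additivity formulation becomes a corollary rather than the engine of the proof. A smaller point: your identity $F(R+x)=F(R)$ for type-$2$ extensions holds only up to the isomorphism identifying $x$ with the join/meet already present in $F(R)$; this is harmless but should be said, since later moves may be incident with $x$.
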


\section{HF-Orderings}\label{sec_HF}

	\begin{definition}\label{def_directed_graph} A directed graph is a pair $(V, R)$ such that $V$ is a set and $R$ is a collection of ordered pairs from $V$ such that if $(a, b) \in R$, then $a \neq b$ and $(b, a) \notin R$.
\end{definition}

	\begin{definition}\label{def_path} Let $(V, R)$ be a directed graph (a.k.a. digraph) and $a \neq b \in V$. A (directed) path $\pi$ from $a$ to $b$ is a sequence $(a_0, ..., a_n)$ of elements from $V$ such that $n > 0$, $a_0 = a$, $a_n = b$ and $R(a_i, a_{i+1})$ for every $i = 0, ..., n-1$. Given a directed path $\pi = (a_0, ..., a_n)$ from $a$ to $b$ we let the {\em length} of $\pi$ to be $n$. The {\em distance} $d_R(a, b)$ between $a$ and $b$ in $(V, R)$ is the length of the shortest path from $a$ to $b$ (where by convention we set this to be $\infty$ when there is no such path).
\end{definition}

	\begin{definition}\label{HF-digraph} Let $A \subseteq B$ be partial planes (in particular $A$ can be $\emptyset$), and $R$ a directed graph structure on $B$. We say that $(B, R)$ is an HF-digraph over $A$ when:
	\begin{enumerate}[(1)]
	\item if $a \in A$, then for every $b \in B$ we have that $B \models \neg R(b, a)$;
	\item for $a \in B - A$ and $b \in B$, $a$ is incident with $b$ if and only if $R(a, b)$ or $R(b, a)$;
	\item for every $b \in B$, $|\{ a \in B : R(a, b) \}| \leq 2$.
\end{enumerate}
\end{definition}

	\begin{definition}\label{def_nwf_cons} \begin{enumerate}[(1)]
	\item Let $A \subseteq B$ be partial planes (in particular $A$ can be $\emptyset$), we say that $B$ is HF-constructible (resp. F-constructible) from (or over) $A$ if there is a linear ordering $(B - A, <)$ such that for every $b \in B - A$ there are at most two (resp. exactly two) elements of $B$ such that they are incident with $b$ and either from $A$ or from $B-A$ and $<$-smaller than $b$. 
	\item\label{leq_hf} If $B$ is HF-constructible (resp. F-constructible) from $A$, then we write $A \leq_{HF} B$ (resp. $A \leq_F B)$. Furthermore, we refer to linear orderings as in (1) as HF-orderings (resp. F-orderings) of $B$ over $A$.
	\item\label{pred_R} Given a HF-ordering of $B$ over $A$ we define a directed graph structure (cf. Definition \ref{def_directed_graph}) $(B, R_<)$ on $B$ by letting $R_<(a, b) = R(a, b)$ if $b \in B - A$, $b$ is incident with $a$ and either $a \in A$ or $a \in B-A$ and $a < b$.
\end{enumerate}
\end{definition}

	\begin{definition}\label{def_compatible} Let $A, B$ and $R$ be as in Definition \ref{HF-digraph}, and $<$ an ordering of $B - A$. We say that the order $<$ is compatible with the HF-digraph $(B, R)$ over $A$ if $R(a, b)$ implies that $a \in A$ or $a < b$.
\end{definition}

	\begin{remark}\label{remark_HF-digraph} 
	\begin{enumerate}[(1)]
	\item Let $(A, R_<)$ be as in Definition~\ref{def_nwf_cons}(\ref{pred_R}). Then $(A, R_<)$ is an HF-digraph over $B$ (cf. Definition~\ref{HF-digraph}).
	\item Let $A, B$ and $R$ be as in Definition \ref{HF-digraph}. Then there is an HF-ordering $<$ of $B$ over $A$ compatible with the HF-digraph $(B, R)$ over $A$.
\end{enumerate}
\end{remark}

	\begin{remark}\label{remark_well_founded} Notice that Definition \ref{def_nwf_cons} is consistent with Definition \ref{HF_construct_def}, i.e. if $B$ is countable and $(B - A, <)$ is a well-ordering of order type $\leq \omega$, then the two definitions of HF-construction given in Definitions \ref{HF_construct_def} and \ref{def_nwf_cons} coincide. Notice also that the more general definition of HF-ordering that we introduce (i.e. Definition~\ref{def_nwf_cons}) is not present in the literature (although variants of it were already considered by the authors in \cite[Sections 5 and 6]{paolini&hyttinen}), while the more restrictive one (i.e. Definition~\ref{HF_construct_def}) is present in various references on the subject, see e.g. \cite{sieben, ellers, ditor}. On the other hand, the consideration of non-wellfounded HF-ordering is crucial for the model-theoretic treatment of the subject, and it will be the main technical tool behind all our proofs. To give an example of the naturality of this notion in our setting, notice that, as argued in more detail in Remark \ref{remark_const_ultrapower}, for every HF-ordering~$<$ of a free projective plane $A$ and ultraproduct $A^*$ of $A$, we can extend naturally the order $<$ to an HF-ordering $<_*$ of $A^*$, but, unless the ultrafilter underlying the ultraproduct $A^*$ is principal, the HF-ordering $<_*$ is non-well-founded!
\end{remark}

	\begin{notation}\label{notation_preserving} Let $B \subseteq A$ and $D \subseteq C$ be partial planes such that $A$ admits an HF-ordering $<_A$ over $B$, and $C$ admits an HF-ordering $<_C$ over $D$, and let $f: A \rightarrow B$ be an embedding of partial planes such that $f(B) = D$. We say that $f$ is an $(R_{<_A}, R_{<_C})$-preserving embedding (or simply an $R$-preserving embedding, when the orders are clear from the context) if $R_{<_A}(b, a)$ holds if and only if $R_{<_C}(f(b), f(a))$ holds (recall Definition \ref{def_nwf_cons}(\ref{pred_R})), i.e. it is an embedding of models in the expanded language $L' = (S_1, S_2, I, R)$ (cf. Notation~\ref{notation_theory}), where the predicate $S_1$ holds of points, the predicate $S_2$ holds of lines, the relation $I$ denotes the point-line incidence relation and $R$ is interpreted as $R_{<_A}$ and $R_{<_C}$, respectively.
\end{notation}

		\begin{definition}\label{closure_def} Let $A$, $B$ and $(B-A, <)$ be as in Definition \ref{def_nwf_cons}. We define an operator $cl_< = cl$ on subsets $C$ of $B$ by declaring $cl(C)$ to be the smallest set satisfying the following requirements: 
	\begin{enumerate}[(1)]
	\item $C \subseteq cl(C)$;
	\item if $c \in cl(C)$ and there is $b \in B$ such that $R_<(b, c)$, then $b \in cl(C)$.
\end{enumerate}
Notice that from Definition \ref{def_nwf_cons}(\ref{pred_R}) it follows that $C \subseteq A$ implies $C = cl_<(C)$.
\end{definition}

	\begin{proposition} Let $A$, $B$ and $(B-A, <)$ be as in Definition \ref{def_nwf_cons}. The operator $cl = cl_<$ from Definition \ref{closure_def} is a closure operator, i.e. for $C, D \subseteq B$ we have that:
	\begin{enumerate}[(1)]
	\item $C \subseteq cl(C)$;
	\item $C \subseteq D$ implies $cl(C) \subseteq cl(D)$;
	\item $cl(cl(C)) = cl(C)$.
	\end{enumerate}
\end{proposition}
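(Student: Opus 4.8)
The plan is to treat this as the standard fact that a set defined as the least set closed under a monotone generating rule carries a closure operator, and to verify the three conditions directly from the minimality clause in Definition~\ref{closure_def}.

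First I would check that $cl(C)$ is well-defined. Let $\mathcal{F}_C$ be the family of all $X \subseteq B$ such that $C \subseteq X$ and $X$ is closed under the backward rule of Definition~\ref{closure_def}, namely $c \in X$ and $R_<(b,c)$ imply $b \in X$. This family is nonempty, since $B \in \mathcal{F}_C$, and it is closed under arbitrary intersection: if $X = \bigcap \mathcal{F}_C$ then clearly $C \subseteq X$, and if $c \in X$ and $R_<(b,c)$, then $b$ belongs to every member of $\mathcal{F}_C$, hence $b \in X$. Thus $cl(C) = \bigcap \mathcal{F}_C$ is the smallest element of $\mathcal{F}_C$, as required by Definition~\ref{closure_def}.

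Condition (1) is then immediate, being one of the two defining requirements of membership in $\mathcal{F}_C$. For condition (2), suppose $C \subseteq D$; I claim $cl(D) \in \mathcal{F}_C$: indeed $C \subseteq D \subseteq cl(D)$, and $cl(D)$ is closed under the backward rule by construction, so by minimality of $cl(C)$ we get $cl(C) \subseteq cl(D)$. For condition (3), the inclusion $cl(C) \subseteq cl(cl(C))$ is (1) applied to the set $cl(C)$; conversely $cl(C) \in \mathcal{F}_{cl(C)}$, since $cl(C) \subseteq cl(C)$ and $cl(C)$ is closed under the backward rule, so minimality gives $cl(cl(C)) \subseteq cl(C)$, whence equality.

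Alternatively, and perhaps more transparently, one may first prove by induction on path length that $cl(C) = C \cup \{\, b \in B : d_{R_<}(b,c) < \infty \text{ for some } c \in C \,\}$ (cf.\ Definition~\ref{def_path}), after which (1)--(3) are read off directly from this explicit description of $cl(C)$ as the set of vertices from which some element of $C$ is reachable along $R_<$. I do not anticipate any real obstacle: the statement is a formal consequence of the least-closed-set definition, and the only point deserving a sentence is the existence of the smallest such set, handled above via closure of $\mathcal{F}_C$ under intersection.
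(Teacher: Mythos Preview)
Your argument is correct: the standard intersection-of-all-closed-supersets verification is exactly what is needed, and the alternative explicit description via $R_<$-paths is also valid. The paper itself gives no proof of this proposition, treating it as immediate from the definition, so your write-up is already more detailed than what appears there.
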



	\begin{proposition}\label{lemma_initial_seg} Let $A$, $B$ and $(B-A, <)$ be as in Definition \ref{def_nwf_cons}, and let $C \subseteq B$ be such that $cl_<(C) = C$ (cf. Definition \ref{closure_def}). Then there is an HF-ordering $<_+$ of $B$ over $C \cup A$. Furthermore, the order $<_*$ obtained concatenating $< \restriction (C - A)$ and $<_+$ is such that $(B, R_<)$ and $(B, R_{<_*})$ are isomorphic as digraphs.
\end{proposition}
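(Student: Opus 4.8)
The plan is to take $<_+$ to be the restriction of the given HF-ordering $<$ of $B$ over $A$ to the set $B-(C\cup A)$, and to take $<_*$ to be the concatenation that places $C-A$ (with the order induced by $<$) as an initial segment, followed by $B-(C\cup A)$ (again with the order induced by $<$, that is, by $<_+$). For these choices the claim will be that the relevant predecessor sets never change: for every $b\in B-(C\cup A)$ the set of $a\in B$ relevant to Definition~\ref{def_nwf_cons}(1) for $<_+$ over $C\cup A$ equals $\{a\in B:R_<(a,b)\}$, and on all of $B$ one has $R_{<_*}=R_<$ as relations on $B$; the latter makes the identity map a digraph isomorphism $(B,R_<)\to(B,R_{<_*})$, so in fact something slightly stronger than the stated isomorphism is obtained. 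Everything reduces to one local observation.

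That observation is as follows. Since $cl_<(C)=C$, Definition~\ref{closure_def}(2) says that $C$ is closed downward under $R_<$: if $a\in C$ and $R_<(b,a)$, then $b\in C$. From this I would derive that \emph{whenever $a$ is incident with $b$ with $a\in C-A$ and $b\in B-(C\cup A)$, one has $a<b$}. Indeed, if instead $b<a$, then since $a\in B-A$ Definition~\ref{def_nwf_cons}(\ref{pred_R}) gives $R_<(b,a)$, whence $b\in cl_<(C)=C$, contradicting $b\notin C$. Informally: elements outside $C$ are never $R_<$-predecessors of elements of $C$ --- which is just the closure hypothesis re-read --- and it is this, applied to incident pairs, that drives both halves of the argument.

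For the first half I would note that $C\cup A$, with the induced incidence, is a subconfiguration of $B$ and hence a partial plane, so ``HF-ordering of $B$ over $C\cup A$'' is meaningful; then, for each $b\in B-(C\cup A)$, I would show that the set of $a\in B$ incident with $b$ and lying either in $C\cup A$ or in $B-(C\cup A)$ with $a<_+b$ coincides with $\{a\in B:R_<(a,b)\}$, which has size at most $2$ since $<$ is an HF-ordering of $B$ over $A$. Both inclusions are a three-way case split on whether the element under consideration lies in $A$, in $C-A$, or in $B-(C\cup A)$; the only nonroutine case is that of an element $a\in C-A$ incident with $b$, for which one must verify $R_<(a,b)$, and this is exactly where the local observation is used to turn ``$a$ incident with $b$'' into ``$a<b$''. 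This yields $C\cup A\leq_{HF}B$.

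For the ``furthermore'' I would check $R_{<_*}=R_<$ on $B$ directly. By Definition~\ref{def_nwf_cons}(\ref{pred_R}) the two relations can differ only on incident pairs $a,b\in B-A$ for which ``$a<b$'' and ``$a<_*b$'' have different truth values; by construction of $<_*$ this requires one of $a,b$ to lie in $C-A$ and the other in $B-(C\cup A)$, and in that case the local observation fixes the direction of $<$ to agree with that of $<_*$. Hence $R_{<_*}=R_<$, so the identity witnesses the asserted digraph isomorphism (and, as a byproduct, $<_*$ is itself an HF-ordering of $B$ over $A$). I do not anticipate a real obstacle here: the proof is pure bookkeeping over the three regions $A$, $C-A$, $B-(C\cup A)$ together with careful orientation in the uses of the local observation.
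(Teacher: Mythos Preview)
Your proposal is correct and is exactly the natural argument. The paper does not spell out a proof of this proposition (the surrounding text calls the result, together with Proposition~\ref{prop_embed_th}, ``easy and essentially well-known''), so there is no alternative approach to compare; your choice $<_+ := {<}\restriction(B-(C\cup A))$ and the verification via the closure of $C$ under $R_<$-predecessors is precisely what is intended, and your observation that in fact $R_{<_*}=R_<$ (so the identity is the claimed digraph isomorphism) is the right strengthening.
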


	\begin{proposition}\label{prop_embed_th} Let $C$ be an open projective plane and $<$ an HF-ordering of $C$ over a projective subplane $A$ of $C$ (in this claim we allow the possibly that $A$ is degenerate, and in particular it can be $\emptyset$). Let $A \subseteq B \subseteq C$ be such that $cl_{<}(B) = B$. Then $\langle B \rangle_C \cong F(B)\leq_{HF} C$ (cf. Definitions \ref{subplane} and \ref{free_extension}). 
\end{proposition}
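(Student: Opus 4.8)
The plan is to construct $\langle B\rangle_C$ explicitly inside $C$, stage by stage, and to show that each stage is simultaneously $cl_<$-closed and a faithful copy of the corresponding stage of the free extension $F(B)$; the relation $\langle B\rangle_C\leq_{HF}C$ will then follow from Proposition~\ref{lemma_initial_seg}. First I would define an increasing chain $B=B_0\subseteq B_1\subseteq\cdots$ of subconfigurations of $C$: let $B_{2k+1}$ be $B_{2k}$ together with the $C$-line $d\vee_C d'$ for every pair of points $d,d'\in B_{2k}$ not joined in $B_{2k}$, and, for $k>0$, let $B_{2k}$ be $B_{2k-1}$ together with the $C$-point $\ell\wedge_C\ell'$ for every pair of lines $\ell,\ell'\in B_{2k-1}$ not meeting in $B_{2k-1}$; a routine argument gives $\bigcup_n B_n=\langle B\rangle_C$. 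Writing $(P_n)_{n<\omega}$ for the stages of $F(B)$ as in Definition~\ref{free_extension} (with $P=B$), the core of the proof is the claim, proved by induction on $n$, that (i) $cl_<(B_n)=B_n$ and (ii) there is an isomorphism of partial planes $f_n\colon B_n\to P_n$ fixing $B$ pointwise, with $f_{n+1}$ extending $f_n$.

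For the inductive step I would handle the join stage (from $B_{2k}$ to $B_{2k+1}$); the meet stage is dual, the whole set-up being self-dual under interchanging points and lines. The crux is the following claim: every adjoined line $x=d\vee_C d'$ lies outside $B_{2k}$, is incident in $C$ with no element of $B_{2k}$ besides $d$ and $d'$, and distinct unjoined pairs produce distinct lines. All three statements reduce to excluding three distinct collinear points $d,d',y$ of $B_{2k}$ whose $C$-join $x$ lies outside $B_{2k}$ --- hence outside $A$, since $A\subseteq B\subseteq B_{2k}$. Suppose we had such a configuration. For each $z\in\{d,d',y\}$: if $z\in A$ then $R_<(z,x)$ holds by definition; if $z\in C-A$, then $z$ and $x$ are $<$-comparable, and $x<z$ is impossible, since it would give $R_<(x,z)$ with $z\in B_{2k}=cl_<(B_{2k})$ (inductive hypothesis), forcing $x\in B_{2k}$; so $z<x$ and again $R_<(z,x)$. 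Hence $d,d',y$ are three distinct $R_<$-predecessors of $x$, contradicting that $<$ is an HF-ordering of $C$ over $A$. The same computation for $z=d$ and $z=d'$ gives $R_<(d,x)$ and $R_<(d',x)$; since there are at most two $R_<$-predecessors, $\{b:R_<(b,x)\}=\{d,d'\}\subseteq B_{2k}$, which with the inductive hypothesis yields (i) for $B_{2k+1}$. For (ii), the adjoined lines are in bijection with the pairs of points of $B_{2k}$ unjoined in $B_{2k}$, each incident with exactly $d$ and $d'$, so $f_{2k}$ extends to $f_{2k+1}$ via $f_{2k+1}(d\vee_C d')=f_{2k}(d)\vee f_{2k}(d')$; well-definedness and bijectivity are immediate from the claim, and incidence is preserved and reflected because in $P_{2k+1}$ the line $f_{2k}(d)\vee f_{2k}(d')$ is incident with exactly $f_{2k}(d)$ and $f_{2k}(d')$ by Definition~\ref{free_extension}.

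Granting (i) and (ii): $\langle B\rangle_C=\bigcup_n B_n\cong\bigcup_n P_n=F(B)$ via $\bigcup_n f_n$, an isomorphism fixing $B$ pointwise; and $cl_<(\langle B\rangle_C)=\bigcup_n cl_<(B_n)=\langle B\rangle_C$, because a directed union of $cl_<$-closed sets is $cl_<$-closed. Proposition~\ref{lemma_initial_seg}, applied with $C$ as ambient plane, $A$ as base, and $\langle B\rangle_C$ as the distinguished $cl_<$-closed set (note $A\subseteq\langle B\rangle_C$), then produces an HF-ordering of $C$ over $\langle B\rangle_C\cup A=\langle B\rangle_C$, that is, $\langle B\rangle_C\leq_{HF}C$, completing the proof.

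The step I expect to be the main obstacle is exactly the crux claim in the inductive step: the ``at most two $R_<$-predecessors'' principle has to do double duty --- forbidding premature coincidences and spurious incidences among newly added elements, while at the same time transporting $cl_<$-closedness to the next stage --- and it works only because $cl_<$-closedness of $B_{2k}$ lets us place every $R_<$-edge into a new element on its source side. Some care is also needed in the degenerate situations ($A=\emptyset$, or $A$ or $B$ not a projective plane), but these affect only the bookkeeping, not the shape of the argument.
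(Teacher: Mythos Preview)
Your argument is correct. The paper does not actually prove this proposition: it simply says ``This is easy and essentially well-known (see e.g.~\cite[Proposition~1.5.11]{kelly})'' and moves on, so your write-up supplies the details the authors omit. The stage-by-stage construction of $\langle B\rangle_C$ inside $C$, the inductive maintenance of $cl_<$-closedness via the ``at most two $R_<$-predecessors'' constraint, and the appeal to Proposition~\ref{lemma_initial_seg} at the end are exactly the natural route and presumably what the cited reference does as well.
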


\begin{convention} To make proofs and arguments more direct, we will often use Propositions~\ref{lemma_initial_seg} and \ref{prop_embed_th} freely, i.e. without referring to it explicitly.
\end{convention}

	\begin{proof} This is easy and essentially well-known (see e.g. \cite[Proposition~1.5.11]{kelly}). 
\end{proof}

	\begin{proposition}\label{countable_open_HF_con} Every open partial plane admits an HF-ordering over~$\emptyset$.
\end{proposition}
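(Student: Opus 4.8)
The plan is to build the required linear ordering of an arbitrary open partial plane $P$ over $\emptyset$ by a greedy transfinite construction, extracting at each stage an element that is ``free enough'' over what has already been listed. The key invariant is: at each step we have listed an initial segment $P_\alpha \subseteq P$ which is \emph{closed} in the sense that every element of $P_\alpha$ is incident with at most two elements listed strictly before it, and moreover $P_\alpha$ together with the yet-unlisted elements still forms an open partial plane. First I would reduce to the \emph{finite} case via a compactness/direct-limit argument: if $P$ is open then every finite subconfiguration is open, and by Fact~\ref{open_implies_constr} every finite open partial plane is well-foundedly HF-constructible from $\emptyset$; the task is to glue these finite constructions coherently. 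Concretely, I would enumerate $P = \{ p_\xi : \xi < \kappa \}$ and construct by recursion an increasing chain of finite (or small) subconfigurations $(Q_\xi)$ together with HF-orderings of each $Q_\xi$ over $\emptyset$ that extend the previous ones, ensuring $p_\xi \in Q_{\xi+1}$; the union of the orderings is then an HF-ordering of $P$ over $\emptyset$.

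The heart of the construction is the successor step: given a finite open $Q$ with an HF-ordering, and a new point (or line) $p \notin Q$, one must find a finite open $Q' \supseteq Q \cup \{p\}$, still inside $P$, with an HF-ordering over $\emptyset$ extending the one on $Q$. Here I would argue as follows. Consider $Q \cup \{p\}$; this is a finite subconfiguration of $P$, hence open. Now I want to \emph{build up} from $Q$ to some $Q'$ containing $p$ by adding finitely many elements one at a time, each time adding an element incident with at most two already-present elements. The obstruction is that $p$ itself, inside $Q \cup \{p\}$, might already be incident with three or more elements of $Q$ — but that cannot happen, because $Q \cup \{p\}$ is open, so \emph{some} element of $Q \cup \{p\}$ is incident with at most two others; iterating, one peels off a well-founded HF-construction of $Q \cup \{p\}$ from $\emptyset$. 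The subtlety is that this peeling might reorder elements of $Q$. To fix this I would instead appeal directly to the machinery already developed: by Proposition~\ref{lemma_initial_seg} and Proposition~\ref{prop_embed_th}, the closure $cl_<$ of the finite set $Q \cup \{p\}$ inside $P$ — call it $B$ — satisfies $cl_<(B) = B$ only once we know $P$ itself is HF-orderable, which is circular. So instead I would work purely combinatorially with \emph{finite} pieces: since $Q$ carries an HF-ordering and $Q \cup \{p\}$ is open, a direct application of Fact~\ref{open_implies_constr}-style peeling shows $Q \cup \{p\}$ is well-foundedly HF-constructible from $Q$ (not from $\emptyset$), because at each stage of the peeling we may always find a removable element \emph{outside} $Q$ unless $Q \cup \{p\} = Q$, using openness of $Q \cup \{p\}$ restricted appropriately. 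Prepending the HF-ordering of $Q$ yields the desired HF-ordering of $Q' := Q \cup \{p\}$.

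At limit stages $\xi$ we simply take $Q_\xi = \bigcup_{\eta < \xi} Q_\eta$ with the union ordering, which is still an HF-ordering over $\emptyset$ since the defining condition — each element incident with at most two strictly earlier elements — is local and is inherited from the finite approximations. Finally the union over all $\xi < \kappa$ gives a linear ordering $<$ on $P$ over $\emptyset$ with the property that every $p \in P$ is incident with at most two $<$-earlier elements, i.e.\ $\emptyset \leq_{HF} P$ in the sense of Definition~\ref{def_nwf_cons}(\ref{leq_hf}).

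The main obstacle I expect is precisely the coherence at the successor step: ensuring that adjoining a new element $p$ can always be realized by adding only finitely many further elements each of ``incidence degree at most two over the past'', \emph{without disturbing the already-fixed ordering of $Q$}. The cleanest route is the lemma that for finite open partial planes $Q \subseteq R$ with $Q$ already HF-ordered, $R$ is well-foundedly HF-constructible from $Q$ — this should follow from a minor adaptation of Siebenmann's peeling argument (Fact~\ref{open_implies_constr}): openness of $R$ guarantees a removable element; if every removable element lay in $Q$ one would, after removing them, still need $Q$ itself to be HF-constructible from a proper subset in a way matching the given order, which is arranged by noting that $Q$'s own HF-ordering already exhibits such removable elements, so one can always choose to remove from $R \setminus Q$ first. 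Once this finite lemma is in hand, the transfinite assembly is routine.
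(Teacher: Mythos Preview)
There is a genuine gap at the successor step. Your key lemma --- that for finite open partial planes $Q \subseteq R$, the larger configuration $R$ is HF-constructible \emph{from} $Q$ --- is false, and no amount of adjoining extra elements to $R$ repairs it. Take $Q = \{\ell_1,\ell_2,\ell_3\}$ consisting of three lines with no incidences, and let $p$ be a point of $P$ incident with all three (three concurrent lines is a perfectly legal open configuration). Then $R = Q \cup \{p\}$ is open, but the only element of $R \setminus Q$ is $p$, which has three incidences in $R$ and hence is not removable. More to the point: in \emph{any} linear ordering of any $Q' \supseteq Q \cup \{p\}$ that extends the given ordering on $Q$, all three lines $\ell_1,\ell_2,\ell_3$ precede $p$, so $p$ has at least three incidences with earlier elements and the HF condition is violated. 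Your greedy recursion can easily land in this situation (just enumerate the three lines before $p$), and once it does there is no way forward without tearing up the ordering already committed to on $Q$.

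The paper's argument avoids this by refusing to commit greedily. It fixes, once and for all, an arbitrary HF-ordering $<_A$ of each finite subconfiguration $A$ (possible by Fact~\ref{open_implies_constr}), and then uses an ultrafilter on the directed set of finite subconfigurations to \emph{select}, for each $A$, one of its finitely many HF-orderings $<^*_A$ in such a way that $A \subseteq B$ implies $<^*_A = {<^*_B} \restriction A$. The union $\bigcup_A <^*_A$ is then the desired HF-ordering of $P$. Equivalently, this is a compactness argument: the existence of a global HF-ordering is a property expressible by a set of constraints each involving only finitely many elements, and finite satisfiability is exactly Fact~\ref{open_implies_constr}. The point is that coherence is obtained non-constructively, all at once, rather than by an inductive extension procedure --- which, as the example above shows, cannot work.
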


	\begin{proof} Let $P$ be an open partial plane and let $X_P = X$ be the set of all finite subconfigurations of $P$ (cf. Definition \ref{subconf}). Then, by Fact \ref{open_implies_constr}, for every $A \in X$ we can find an HF-ordering $<_A$ of $A$ over $\emptyset$. Let now $\mathfrak{U}$ be an ultrafilter on $X$ such that for all $A \in X$ we have that $X_A = \{ B \in X : A \subseteq B \} \in \mathfrak{U}$ (notice that the collection of sets of the form $X_A$ have the finite intersection property, and so such an ultrafilter $\mathfrak{U}$ does exist). Now, for $A \in X$, let $<^1_A, ..., <^{n(A)}_A$ be an injective enumeration of the HF-orderings of $A$ over $\emptyset$, and, for $0 < i \leq n(A)$, let $Y^i_A = \{ B \in X : A \subseteq B \text{ and } <_B \restriction A = <^A_i \}$. Notice that $X_A = Y^1_A \cup \cdots \cup Y^{n(A)}_A$ and that for $0 < i < j \leq n(A)$ we have that $Y^i_A \cap Y^j_A = \emptyset$. Hence, being $\mathfrak{U}$ an ultrafilter, we can find a unique HF-ordering $<^*_A$ of $A$ over $\emptyset$ such that:
	$$Y_A = \{ B \in X : A \subseteq B \text{ and } <_B \restriction A = <^*_A\} \in \mathfrak{U}.$$
	Notice now that for $A, B \in X$ such that $A \subseteq B$ we have that $<^*_A = <^*_B \restriction A$. In fact, since $Y_A, Y_B \in \mathfrak{U}$, we have that $Y_A \cap Y_B \neq \emptyset$. Let $C \in Y_A \cap Y_B$, then we have that $<^*_A = <_C \restriction A$ and $<^*_B = <_C \restriction B$, from which it follows $<^*_A = <^*_B \restriction A$. Thus, we can conclude that $\bigcup_{A \in X} <^*_A$ is an HF-ordering of $P$ over $\emptyset$ (since this is an ordering and any counterexample to it being an HF-ordering is contained in an $A \in X_P$.)
\end{proof}

	
	\begin{observation}[Duality Principle for Open Projective Planes]\label{obs_duality} Let $A$ be an open projective plane and $<$ and HF-ordering of $A$ over $\emptyset$ (cf. Proposition~\ref{countable_open_HF_con}). Then the partial plane $\check{A}$ obtained switching the role of points and lines of $A$ is a projective plane and $<$ is an HF-ordering of $\check{A}$ over $\emptyset$.
\end{observation}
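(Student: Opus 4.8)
The statement to prove is the Duality Principle: if $A$ is an open projective plane and $<$ is an HF-ordering of $A$ over $\emptyset$, then the dual partial plane $\check{A}$ (swapping points and lines) is again a projective plane, and the \emph{same} order $<$ is an HF-ordering of $\check{A}$ over $\emptyset$. First I would dispose of the easy half: that $\check{A}$ is a projective plane. This is the classical duality of projective planes — axioms (A) and (B) of Definition~\ref{def_plane} are interchanged by swapping points and lines, and non-degeneracy is self-dual (a quadrangle of points dualizes to four lines no three concurrent, and in a projective plane the existence of a quadrangle of points is equivalent to the existence of a quadrilateral of lines). So $\check{A}$ is a non-degenerate projective plane with the same incidence relation.

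\textbf{The main point.} The substantive claim is that $<$ remains an HF-ordering when we pass to $\check{A}$. Unwinding Definition~\ref{def_nwf_cons}(1) with $A$ (the outer set) replaced by $\emptyset$: we must check that for every $b$, the number of elements $c$ with $c$ incident to $b$ and $c < b$ is at most two. But the incidence relation of $\check{A}$ is \emph{literally the same} relation as that of $A$ (Definition~\ref{subconf} and the definition of $\check{A}$ only swap the unary sorts $S_1, S_2$, not the binary relation $I$), and the order $<$ on the underlying set is unchanged. Therefore, for a fixed element $b$, the set $\{ c : c \text{ incident with } b \text{ and } c < b \}$ computed in $\check{A}$ is \emph{identical} to the one computed in $A$, which has size $\leq 2$ because $<$ is an HF-ordering of $A$. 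Hence $<$ is an HF-ordering of $\check{A}$ over $\emptyset$. The only subtlety — and I expect this is the one line worth spelling out — is that Definition~\ref{def_nwf_cons} is phrased in terms of the abstract incidence structure and not in terms of "points joining points" versus "lines meeting lines", so the HF-condition is manifestly sort-blind and survives duality verbatim.

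\textbf{Obstacle.} There is essentially no obstacle; the entire content is the observation that the HF-condition in Definition~\ref{def_nwf_cons} refers only to the incidence graph and the linear order, both of which are preserved under duality, together with the standard self-duality of the projective-plane axioms. The one thing to be a little careful about is making explicit that the convention of non-degeneracy (Convention~\ref{deg_conv}) is compatible: since $A$ is assumed non-degenerate and we want $\check{A}$ to be a projective plane in the sense of this paper, we invoke that non-degeneracy is self-dual. If one wanted to be fully rigorous one could note that in any projective plane every line carries at least three points and every point lies on at least three lines (a consequence of non-degeneracy), and these two statements are exchanged by duality, giving non-degeneracy of $\check{A}$ directly. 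After that, the proof is a two-sentence bookkeeping argument, which is presumably why the authors state it as an Observation.
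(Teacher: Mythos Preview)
Your argument is correct. The paper gives no proof of this Observation at all---it is stated without a proof environment and treated as self-evident---so there is nothing to compare against beyond noting that your explanation is exactly the reason the authors can leave it unproved: the HF-ordering condition in Definition~\ref{def_nwf_cons}(1) refers only to incidence and to the linear order, both of which are unchanged under duality, and the projective-plane axioms together with non-degeneracy are self-dual.
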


	\begin{proof}[Proof of Theorem~\ref{theorem_characterization}] This follows from Proposition~\ref{countable_open_HF_con}.
\end{proof}

\section{Axiomatization}\label{sec_axiomatization}

	Throughout the paper we will use the following notation:

	\begin{notation}\label{notation_theory} Throughout the rest of the paper, let $T$ be the theory of open projective planes (cf. Definition \ref{open} and recall Convention \ref{deg_conv}) in a language $L$ with two sorts $S_1$ and $S_2$ specifying the set of points and the set of lines, and a symmetric binary relation $I$ specifying the point-line incidence relation.
\end{notation}

	\begin{remark}\label{remark_const_ultrapower} Let $A \models T$, $<$ an HF-ordering of $A$ over $\emptyset$ (by Proposition~\ref{countable_open_HF_con} we can always find such an order), $R_<$ as in Definition~\ref{def_nwf_cons}(\ref{pred_R}), and $(A^*, R_<^*)$ an ultraproduct of $(A, R_<)$ (as a structure expanded with a directed edge relation), with respect to the ultrafilter $\mathfrak{U}$ on the set $I$. Then any ordering $<_*$ of $A^*$ compatible with the HF-digraph $R_<^*$ over $\emptyset$ (cf. Definition~\ref{def_compatible}) is an HF-ordering of $A^*$ over~$\emptyset$ such that $<_* \restriction A = <$. Notice that for every infinite cardinal~$\kappa$, we can choose $I$ and $\mathfrak{U}$ such that $(A_*, <_*)$ is $\kappa^+$-saturated (as a structure expanded with a linear order). Finally, notice that, unless $\mathfrak{U}$ is principal, the order $<_*$ is non-well-founded.
\end{remark}

	\begin{lemma}\label{cofinal_points} Let $A$ be an open projective plane and let $<$ be an HF-ordering of $A$ over $\emptyset$ (cf. Proposition \ref{countable_open_HF_con}). Then:
	\begin{enumerate}[(1)]
	\item the set of points (resp. lines) of $A$ is cofinal in the HF-ordering $<$.
	\item for every line $\ell$ (resp. point $p$) of $A$, the set of points of $A$ incident with $\ell$ (resp. of lines of $A$ incident with $p$) is cofinal in the HF-ordering $<$.
\end{enumerate}	
\end{lemma}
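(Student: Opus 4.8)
The plan is to prove both parts simultaneously by a contradiction argument, exploiting the defining property of open partial planes (Definition~\ref{open}): no finite subconfiguration has all its elements incident with at least three others from the subconfiguration. First I would set up the relevant objects: given the HF-ordering $<$ of $A$ over $\emptyset$, recall from Definition~\ref{def_nwf_cons}(\ref{pred_R}) the associated digraph $(A, R_<)$, in which every element $b$ has at most two $R_<$-predecessors, and $a$ is incident with $b$ iff $R_<(a,b)$ or $R_<(b,a)$. The strategy will be: if some set $Z \subseteq A$ were an upper bound (i.e. $z < Z$ for all $z$ in some infinite or "bad" set), I would try to locate a \emph{finite} subconfiguration of $A$ sitting entirely below $Z$ in which every element is incident with $\geq 3$ others of that subconfiguration, contradicting openness.

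For part~(1), suppose toward a contradiction that the points of $A$ are \emph{not} cofinal in $<$; then there is some $a_0 \in A$ (necessarily a line, since the set of \emph{all} elements is trivially cofinal) above all points. Consider the closed-downward set $B = cl_<(\{a_0\})$ in the sense of Definition~\ref{closure_def}; by Proposition~\ref{prop_embed_th} (with $A := \emptyset$, $B := cl_<(\{a_0\})$, and noting $a_0$ together with its $R_<$-ancestors), $\langle B \rangle_C$ is itself a projective plane, hence contains a quadrangle, hence contains points — but all of these lie $<$-below $a_0$, giving a contradiction. The line case is dual via Observation~\ref{obs_duality}. Actually the cleaner route is just: in a projective plane through every line there pass at least three points and through every point at least three lines (non-degeneracy, Convention~\ref{deg_conv}); so a line $a_0$ with no point of $A$ above it must have all three of its points among its $R_<$-predecessors — but $R_<$ gives at most two predecessors, contradiction. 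This handles part~(1) directly and will be short.

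For part~(2), fix a line $\ell$ of $A$ and suppose the set $P_\ell$ of points of $A$ incident with $\ell$ is \emph{not} cofinal in $<$; pick $a_0 \in A$ with $p < a_0$ for every $p \in P_\ell$ (so $\ell < a_0$ as well, as $\ell$ has at most two predecessors but at least three points incident with it, so some point of $\ell$ is $<$-above $\ell$, forcing $\ell < a_0$ too). Now I would run a closure/descent argument: let $B = cl_<(\{a_0\})$, which contains all of $\ell$'s incident points once we observe each such point $p$, being $<$-below $a_0$ and incident with $\ell \in cl_<(\{a_0\})$... — here is the subtlety and the \textbf{main obstacle}: being in $cl_<$ means being an $R_<$-\emph{ancestor}, and $p$ need not be an ancestor of $a_0$. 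So instead I would argue as follows. By Proposition~\ref{lemma_initial_seg}, reorder so that $B = cl_<(\{a_0\})$ forms an initial segment; then $\langle B \rangle_C \cong F(B)$ is a projective plane by Proposition~\ref{prop_embed_th}. If $\ell \in B$, then in the projective plane $\langle B \rangle_C$ the line $\ell$ carries infinitely many — at least three — points, all of which lie in $\langle B \rangle_C$; but points of $\langle B \rangle_C$ added \emph{after} the initial segment are $<$-above everything in $B$, in particular above $a_0$, so at most two points on $\ell$ lie in $B$ itself, and the rest are $>a_0$ — contradicting that all points incident with $\ell$ are $<a_0$. The remaining case, $\ell \notin B$, would be handled by instead taking $B = cl_<(\{a_0, \ell\})$ or, more simply, by noting that $\ell$ has at most two $R_<$-predecessors, so if $\ell$ is not $<$-below $a_0$ then $\ell$ itself witnesses that fewer than all its incident points can be $<$-predecessors, and a point of $\ell$ not among these two predecessors is $<$-above $\ell$; iterating, one produces a point incident with $\ell$ that is arbitrarily high, contradiction. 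The dual statement (lines through a point $p$) again follows by Observation~\ref{obs_duality}. I expect the bookkeeping around which elements land in the closure, versus which are forced above $a_0$, to be the one genuinely delicate point; everything else is a routine application of non-degeneracy plus the two-predecessor bound.
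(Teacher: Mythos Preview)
Your argument for part~(1) is correct and in fact cleaner than the paper's: you observe directly that any line carries at least three points (by non-degeneracy) while at most two of them can be $R_<$-predecessors, so some incident point lies $<$-above it; dually for lines through a point. The paper instead fixes a seven-element configuration (the complete quadrangle together with its three diagonal points) and runs a pigeonhole/counting argument on it, which is more work for the same conclusion.

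For part~(2), however, your argument has a genuine gap. In the case $\ell \in B = cl_<(\{a_0\})$ you assert that ``at most two points on $\ell$ lie in $B$ itself.'' This is false: $B$ is the set of $R_<$-\emph{ancestors} of $a_0$, and a point $p$ on $\ell$ with $\ell < p$ can perfectly well be an ancestor of $a_0$ along a directed path that does not pass through $\ell$ (e.g.\ $p \to m \to \cdots \to a_0$ for some other line $m$). The two-predecessor bound controls only how many points on $\ell$ are $R_<$-predecessors \emph{of $\ell$}, not how many lie in $B$. The reordering via Proposition~\ref{lemma_initial_seg} does not rescue this: the new order $<_*$ has the same digraph $R_{<_*} = R_<$ but is a different linear order, so your hypothesis ``every point on $\ell$ is $<$-below $a_0$'' need not transfer to $<_*$, and you cannot derive a contradiction from positions in $<_*$. (There is also the minor issue that $\langle B\rangle_A$ is a projective plane only if $B$ contains a quadrangle, which you have not arranged.) In the case $\ell \notin B$ the ``iterating'' step is circular: from one point on $\ell$ above $\ell$ you would need another point on $\ell$ above \emph{that} one, which is exactly the cofinality you are proving.

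The paper does not spell out~(2) either, saying only that it is ``similar'' to~(1) with further considerations; their intended route extends the seven-point counting argument rather than going through closures and reorderings. Concretely, what is needed beyond your part-(1) idea is a way, given any $x$, to produce a line $m > x$ with $m \neq \ell$ whose intersection $\ell \wedge m$ is forced to lie $<$-above $m$; this is where a fixed configuration (not two lines through one point, not three points on one line) does real work, and it is the missing ingredient in your proposal.
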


	\begin{proof} We only prove item (1), item (2) can be proved similarly (making some further considerations and using Observation~\ref{obs_duality}). Recall that by definition our projective planes are non-degenerate (cf. Convention \ref{deg_conv}), and thus we can find the following configuration $C$ in $A$:
	\begin{enumerate}[(1)]
	\item points: $a, b, c, d, e, f, g$;
	\item collineations: $ade, bce, acf, bdf, cdg, abg$.
\end{enumerate}
Here, among the points and lines of $C$, one element in $\{ e, f, g \}$ has to be the $<$-largest (since $<$ is an HF-ordering), and $e, f, g$ cannot be collinear (since $A$ is open). Suppose that $e$ is the $<$-largest. Clearly in $A$ we can find a at least one line $\ell_0 > e$ and at least one point $p_0 > \ell_0$. Also, easy inspection shows that there can not be a line of $A$ that contains $> 3$ points from $X = \{ a, b, c, d, e, f, g \}$. Hence, since $|X| = 7$ (and $3 + 3 = 6$), whenever there is a point $p > e$, there is also a line $\ell > p$. Furthermore, using Observation~\ref{obs_duality} we see that whenever there is a line $\ell > e$, there is also a point $p > \ell$. Hence, the cofinality claim follows.
\end{proof}

\begin{context}\label{notation_ultraprod} In what follows we will often work under the following assumptions, which we fix for later reference: $D$ is a model of $T$, $<$ is an HF-ordering of $D$ over~$\emptyset$, $\kappa \geq max\{ 2^{\aleph_0}, |D|^+\}$ and $\mathfrak{U}$ is an non-principal ultrafilter on some set $I$ such that the corresponding ultrapower $D^*$ is $\kappa^+$-saturated, and also the structures $(D^*, R^*_{<})$ (cf. Remark~\ref{remark_const_ultrapower}) and $(D^*, <_*)$ (again cf. Remark~\ref{remark_const_ultrapower}) are $\kappa^+$-saturated, and $<_*$ is compatible with $R^*_{<}$ (as in Remark~\ref{remark_const_ultrapower}), that is $R_{<_*} = R^*_<$. Notice that, by Lemma~\ref{cofinal_points}, the order $<_*$ has cofinality $\geq \omega_1$. For ease of notation, in what follows we will denote the order $<_*$ of $A^*$ just described simply as $<$.
\end{context}

	\begin{notation}\label{notation_embed_th} Let $A$ be a partial plane and let $X$ be the partial plane obtained from $A$ adding $\omega$-many new points $(x_i : i < \omega)$ not incident with any line of $A$. Then $F(X)$ admits a natural HF-ordering $<_+$ over $A$: the elements $(x_i : i < \omega)$ form an initial segment of $<_+$ and the rest of the order is the natural F-ordering of $F(X)$ over $X$ that we get from the definition of $F(X)$ (cf. Definition~\ref{free_extension}).
\end{notation}

	\begin{theorem}\label{embedding_th} Let $(D^*, <)$ be as in Context~\ref{notation_ultraprod} and $A \subseteq D^*$ a countable projective subplane of $D^*$ (cf. Definition~\ref{subplane}) such that $cl_<(A) = A$ (cf. Definition~\ref{closure_def}). Let $X$ be the partial plane obtained from $A$ adding $\omega$-many new points $(x_i : i < \omega)$ not incident with any line of $A$, and let $B = F(X)$ and $<_+$ the HF-ordering of $F(X)$ over $A$ described in Notation~\ref{notation_embed_th}. Then there is an $(R_{<_+}, R_{<})$-preserving embedding (cf. Notation \ref{notation_preserving}) $f: B \rightarrow D^*$ such that $f \restriction A = id_A$. Furthermore, we can choose $f$ such that every $a \in f(B - A)$ is $<$-bigger than \mbox{any given element of~$D^*$.}
\end{theorem}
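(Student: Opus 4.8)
The plan is to realize, using the $\kappa^+$-saturation of $(D^*,<)$ granted in Context~\ref{notation_ultraprod}, a type which asserts that the desired $f$ exists. First fix $d_*\in D^*$ that is $<$-above the prescribed bound and above the whole countable set $A$ — possible since $\mathrm{cf}(<)\geq\omega_1$. Since $cl_<(A)=A$ we may, by Proposition~\ref{lemma_initial_seg} and the conventions following it, regard $<$ as providing an HF-ordering of $D^*$ over $A$; this is the reference for $R_<$ and hence for $(R_{<_+},R_<)$-preservation. Now $B=F(X)$ is freely generated over $A$ by the free points $x_0,x_1,\dots$, so any embedding $f\colon B\to D^*$ with $f\restriction A=\mathrm{id}_A$ is completely determined by the points $y_i:=f(x_i)$: running along the HF-ordering $<_+$ of Notation~\ref{notation_embed_th} one has $f(p\vee q)=f(p)\vee f(q)$ and $f(\ell\wedge\ell')=f(\ell)\wedge f(\ell')$. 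Consequently the demands on $f$ — correct sorts; $I(f(b),f(b'))$ iff $b,b'$ are incident in $B$; injectivity; $f(b)>d_*$ for $b\in B-A$; and $R_<(f(b'),f(b))$ iff $R_{<_+}(b',b)$ — turn into a type $p\big((y_i)_{i<\omega}\big)$ in the language $L\cup\{<\}$ over the countable parameter set $A\cup\{d_*\}$ (the $R$-predecessor relation of an HF-ordering being quantifier-free definable from $I$ and $<$ once one knows which elements lie in the base, and each clause mentioning only the finitely many $y_i$ occurring in $b,b'$). Any realization of $p$ yields the required $f$, together with the ``furthermore'' clause.

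Thus everything reduces to consistency of $p$, i.e.\ finite satisfiability in $(D^*,<)$. A finite fragment involves finitely many variables $y_{i_1},\dots,y_{i_k}$ and, after closing under the joins and meets it mentions, a finite subconfiguration $S\subseteq B$ whose intersection with $A$ is some finite $A_0$ and whose free points are $x_{i_1},\dots,x_{i_k}$. We must find inside $D^*$ a copy of $S$ agreeing with $A_0$ that is \emph{generic} over $A$ (isomorphic to $S$, with the free points on no line of $A_0$, so that forming the joins and meets inside $S$ collapses nothing), lies $<$-above $d_*$ on $S-A$, and is \emph{fresh}, in the sense that the $R_<$-predecessors of each element of $S-A$ are exactly the images of its $R_{<_+}$-predecessors, with no backward $R_<$-edge.

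One produces such a copy greedily, placing images of $x_{i_1},\dots,x_{i_k}$ first and then the generated elements of $S$ in $<_+$-order. The basic move is the elementary fact that in the infinite projective plane $D^*$ there is a point $<$-above any prescribed element and incident with none of finitely many prescribed lines: take a line $m$ above the prescribed element and distinct from the prescribed ones — there are infinitely many lines above any element, as lines are cofinal in $<$ (Lemma~\ref{cofinal_points}) and $<$ has no last element — and then a point of $m$ above the prescribed element and distinct from the finitely many points $m\wedge(\text{prescribed line})$, using that the points of $m$ are cofinal in $<$ (Lemma~\ref{cofinal_points}). Choosing each new point this way, very high up and off all lines so far generated, gives genericity and the ``above $d_*$'' clause for the free points; for the \emph{generated} elements and for freshness one additionally uses the defining feature of an HF-ordering — each element has at most two incident $R_<$-predecessors — so that a join or meet of two already-placed elements, being incident with each of them, lies $<$-above each (hence above $d_*$, and contributing no backward $R_<$-edge) provided neither of the two lies on one of the other's (at most two) incident $R_<$-predecessors; this last is again a finite ``general position'' constraint to be honoured by the greedy choice of the earlier points. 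The hypothesis $cl_<(A)=A$ is exactly what prevents a stray $R_<$-edge from landing inside $A$ and spoiling $R$-preservation.

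The main obstacle is precisely the coordination in this greedy step: the new points must be chosen generic enough that, \emph{simultaneously}, no forced join or meet of the copy of $S$ collapses, drops below $d_*$, or creates a backward $R_<$-edge, while the conjunction of all the resulting requirements remains satisfiable in $D^*$ — and the genuinely nontrivial point in making this precise is showing that one can always find a point above a given element which avoids a given finite set of lines \emph{and} whose at most two incident $R_<$-predecessors avoid a given finite set of points, for which Lemma~\ref{cofinal_points} and the at-most-two-predecessor property of $<$ are the essential tools. Granting this, $\kappa^+$-saturation of $(D^*,<)$ finishes the proof.
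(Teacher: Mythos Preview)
Your reduction via saturation is sound and you correctly isolate the crux: producing points in $D^*$ whose (at most two) $R_<$-predecessors avoid a prescribed finite set of points. But you explicitly \emph{grant} this step, and it is precisely where the content lies. Your ``basic move'' yields a point $y$ on a chosen high line $m$, making $m$ one predecessor of $y$, but leaves the second predecessor entirely uncontrolled; so when you later need $y_i$ to lie off the predecessor lines of $y_j$ (to force $y_i\vee y_j>y_j$), nothing in your greedy scheme constrains $y_j$'s second predecessor to miss the already-placed $y_i$ or the points of $A_0$. The deferred statement does not follow from Lemma~\ref{cofinal_points} and the two-predecessor bound alone.

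The paper's proof is a direct construction (no saturation) that resolves exactly this difficulty. Fix a line $\ell\in A$ and three non-collinear points $a_1,a_2,a_3\in D^*\setminus A$ off $\ell$; using $\mathrm{cf}(<)\geq\omega_1$ and Lemma~\ref{cofinal_points}, build points $b_i$ on $\ell$, each $<$-above a growing chain of countable closed subplanes. Each $b_i$ then has $\ell$ as one predecessor and at most one other, so at most one of $a_1\vee b_i,\ a_2\vee b_i,\ a_3\vee b_i$ can coincide with that other predecessor; by pigeonhole (passing to a subsequence) arrange $b_i<a_1\vee b_i$ and $b_i<a_2\vee b_i$ for all $i$. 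Then $c_i:=(a_1\vee b_{2i})\wedge(a_2\vee b_{2i+1})$ has \emph{both} its $R_<$-predecessors explicitly constructed, from which $c_i<c_i\vee c_j$ and $d<d\vee c_i$ (for $d\in A$) follow, and $x_i\mapsto c_i$ extends canonically to the desired $R$-preserving embedding of $F(X)$. This pigeonhole-then-meet device is the missing ingredient in your sketch.
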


	\begin{proof} Let $A$, $B$, $D^*$ and $<$ be as in the assumption of the theorem. Let $\ell$ be a line of $A$, and $a_1, a_2, a_3$ three distinct points from $D^* - A$ which are non-collinear and not incident with $\ell$. Now, as observed in Proposition \ref{cofinal_points}, the set of points of $D^*$ incident with any given line form a $<$-cofinal sequence and furthermore the order $<$ has cofinality $\geq \omega_1$, as observed in Context~\ref{notation_ultraprod}. Thus, by induction on $i < \omega$, we can find $(B_i : i < \omega)$ and $(b_i : i < \omega)$ such that:
	\begin{enumerate}[(i)]
	\item\label{first} $A \cup \{ a_1, a_2, a_3 \} \subseteq B_0$;
	\item $B_i$ is a countable projective subplane of $D^*$ and $cl_<(B_i) = B_i$;
	\item $b_i$ is incident with $\ell$ and it is $<$-bigger than any element of $B_i$;
	\item\label{last} $B_i \cup \{ b_i \} \subseteq B_{i+1}$ and $B_{i+1} = \langle B_i \cup  cl_<(b_i) \rangle_{D^*}$;
\end{enumerate}
Now, let $R = R_{<}$. Clearly, $R(\ell, b_i)$ holds, for every $i < \omega$. Furthermore, there is at most one $j \in \{ 1, 2, 3 \}$ such that $R(a_j \vee b_i, b_i)$. Thus, by pigeon hole principle, we can assume that $\neg R(a_j \vee b_i, b_i)$ holds for all $j \in \{ 1, 2 \}$ and $i < \omega$. Hence, $b_i < a_1 \vee b_i, a_2 \vee b_i$, since otherwise $R(a_1 \vee b_i, b_i)$ or $R(a_2 \vee b_i, b_i)$ holds. Let $c_i = (a_1 \vee b_{2i}) \wedge (a_2 \vee b_{2i+1})$. Now, $a_1 \vee b_{2i}, a_2 \vee b_{2i+1} < c_i$, since $b_{2i+1} < a_2 \vee b_{2i+1} = a_2 \vee c_i$, and $<$ is an HF-ordering. Also, for $i < j$, the lines $\ell_{i, j}: = c_i \vee c_j$ are such that $c_i, c_j < \ell_{i, j}$. Similarly, for all $d \in A$ we have that $d, c_i < d \vee c_i$. Now, using what we have just observed and the inductive properties (\ref{first})-(\ref{last}) listed above, it is easy to see that the canonical extension to $B = F(X)$ of the map:
	$$ a \mapsto a \; (a \in A); \;\;\; x_i \mapsto c_i \; (i < \omega)$$
 is as wanted. The ``furthermore part'' of the theorem is clear from the proof.
\end{proof}
	
	\begin{remark}\label{F_over_empty} Let $(D^*, <)$ be as in Context~\ref{notation_ultraprod} and $A \subseteq D^*$ a countable projective subplane of $D^*$ such that $cl_<(A) = A$. Then, applying Theorem \ref{embedding_th} we can find $(x_i : i < \omega)$ as there. Now, for every $1 \leq n < \omega$, we can find a point $c(n)$ in $F(X)$, such that $c(n)$ is F-constructible from $(x_i : i < 2^{2n})$, and, for every $i < 2^{2n}$, $d_{R_<}(x_i, c(n)) = 2n$ in $(F(X), R_<)$ (cf. Definitions \ref{def_path} and \ref{def_nwf_cons}(\ref{pred_R})) -- to picture the case $n = 2$ see Example~\ref{example_nis4}. Thus, by compactness and $\kappa^+$-saturation of $(D^*, <)$ and $(D^*, R^*_{<})$ (cf. Context~\ref{notation_ultraprod}), we can find a point $c_{\omega}$ in $D^*$ such that:
\begin{enumerate}[(1)]
\item the order $<$ witnesses that the point $c_{\omega}$ is F-constructible from $\emptyset$ in $D^*$;
\item every element of $cl_<(a_\omega)$ is $<$-greater than any element of $A$;
\item\label{item4} the isomorphism type of $(cl_<(a_\omega), R_<)$ is fixed and as in Construction~\ref{construction_HF_empty}.
\end{enumerate}
\end{remark}	

	\begin{example}\label{example_nis4} This example is to picture the case $n = 2$ in Remark~\ref{F_over_empty}. Let $X_8 = (x_i : i < 8)$ be the partial plane consisting of $8$ non-collinear points. We define the following F-construction over $X_8$:
	\begin{enumerate}[(1)]
	\item add a new line  $x_0 \vee x_1$;
	\item add a new line  $x_2 \vee x_3$;
	\item ...
	\item add a new line  $x_{14} \vee x_{15}$;
	\item add a new point $y_1 := (x_0 \vee x_1) \wedge (x_2 \vee x_3)$;
	\item add a new point $y_2 := (x_4 \vee x_5) \wedge (x_6 \vee x_7)$;
	\item add a new point $y_3 := (x_8 \vee x_9) \wedge (x_{10} \vee x_{11})$;
	\item add a new point $y_4 := (x_{12} \vee x_{13}) \wedge (x_{14} \vee x_{15})$;
	\item add a new line  $y_1 \vee y_2$;
	\item add a new line  $y_3 \vee y_4$;
	\item add a new point $c(2) := (y_1 \vee y_2) \wedge (y_3 \vee y_4)$.
	\end{enumerate}
\end{example}

	\begin{construction}\label{construction_HF_empty} We explain Remark~\ref{F_over_empty}(\ref{item4}), i.e. we describe the isomorphism type of $(cl_<(a_\omega), R_<)$. For all $\eta \in 2^{<\omega}$ there are distinct elements $z_\eta$ such that:
	\begin{enumerate}[(1)]
	\item $z_{\emptyset} = c_{\omega}$;
	\item if $dom(\eta)$ is even, then $z_\eta$ is a point; 
	\item if $dom(\eta)$ is odd, then $z_\eta$ is a line;
	\item if $dom(\eta)$ is even, then $z_\eta = z_{\eta \frown 0} \wedge z_{\eta \frown 1}$; 
	\item if $dom(\eta)$ is odd, then $z_\eta = z_{\eta \frown 0} \vee z_{\eta \frown 1}$;
	\item $R_<(z_{\eta \frown i}, z_\eta)$, for $i = 0, 1$.
	\end{enumerate}
\end{construction}

	\begin{lemma}\label{lemma_Fcons} Let $(D^*, <)$ be as in Context~\ref{notation_ultraprod} and $A \subseteq D^*$ a countable projective subplane of $D^*$ such that $cl_<(A) = A$. Let also $a \in D^* - A$. Then there is a countable $B \subseteq D^*$ such that:
	\begin{enumerate}[(1)]
	\item $A \cup \{ a\} \subseteq B$;
	\item $cl_{<(B)} = B$;
	\item $B$ admits an F-order $<_+$ over $A$;
	\item $B = A \cup cl_{<_+}(a)$.
\end{enumerate} 
\end{lemma}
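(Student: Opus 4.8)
The plan is to build the set $B$ by a countable back-and-forth style closure process, at each stage repairing three potential defects: failure of $cl_<$-closedness, failure of projective-subplane-ness, and the presence of ``extra'' elements that prevent $a$ (together with $A$) from generating everything via an F-ordering. First I would set $B_0 = cl_<(A \cup \{a\})$, which is countable since $a$ has only finitely many $R_<$-predecessors and iterating downwards stays inside the countable plane $D^*$ — actually one must check countability, but since each element has $\leq 2$ immediate $R_<$-predecessors the downward closure of a single point is at most countable, and $A$ is countable and already $cl_<$-closed, so $B_0$ is countable. However $B_0$ need not be a projective subplane, so I would alternate: given $B_n$ countable, let $B_{n+1} = cl_<(\langle B_n \rangle_{D^*})$, and set $B = \bigcup_n B_n$. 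Then $B$ is a countable projective subplane of $D^*$ with $cl_<(B) = B$ and $A \cup \{a\} \subseteq B$, giving items (1) and (2).

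For items (3) and (4) the idea is to use Proposition~\ref{lemma_initial_seg} together with Theorem~\ref{embedding_th}-style reasoning. Since $cl_<(A) = A$, Proposition~\ref{lemma_initial_seg} provides an HF-ordering $<_1$ of $B$ over $A$ (concatenate $< \restriction (A)$-part with the induced order, or directly: $< \restriction (B - A)$ already has the property that every $b \in B - A$ has at most two incident $<$-smaller-or-in-$A$ elements, because $<$ is an HF-ordering of $D^* \supseteq B$ and $B$ is a subconfiguration; the only subtlety is that incidences inside $B$ are the same as in $D^*$, which holds as $B$ is a subconfiguration). So $A \leq_{HF} B$ automatically. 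To upgrade this to an F-ordering with $B = A \cup cl_{<_+}(a)$, I would invoke Proposition~\ref{prop_embed_th}: apply it with the roles $A \subseteq A \cup cl_<(a) \subseteq B$... but that is not quite right because $A \cup cl_<(a)$ need not be $cl_<$-closed relative to the full $B$ — actually it is, being a union of two $cl_<$-closed sets. Hmm, the real content is that $B = \langle A \cup cl_<(a) \rangle_{D^*}$, which by construction of the $B_n$ and Proposition~\ref{prop_embed_th} equals $F$ of the partial plane on $A \cup cl_<(a)$, and the free extension carries a canonical F-ordering over $A \cup cl_<(a)$; concatenating the HF-ordering of $cl_<(a)$ over $A$ (which is an F-ordering if $a$ was added of type $2$, but $a$ might be of lower type) with this canonical F-ordering gives an F-ordering of $B$ over $A$ — except for the type of $a$ itself.

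The main obstacle, which I would address head-on, is exactly that $a$ need not be of type $2$ over $A$ in the HF-digraph: $a$ could be incident with zero or one element that is in $A$ or $<$-below it, so $a$ by itself does not sit at the top of an F-construction. The fix is to use the ultrapower saturation and cofinality from Context~\ref{notation_ultraprod}, in the style of Theorem~\ref{embedding_th}: one does not take $cl_<(a)$ literally but rather finds, inside $D^*$, a point $a'$ with $a \in cl_<(a')$ (or redefines the generating element) such that $a'$ is F-constructible — concretely, if $a$ has type $<2$, pick two $<$-large points $p, q$ already guaranteeing $a = p \wedge q$ or route through Theorem~\ref{embedding_th} to realize $a$ as one of the freely-added points $c_i$. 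Alternatively, and more cleanly, one observes that every element of $cl_<(a) - A$ that fails to be of type $2$ can be ``completed'' by enlarging $B$ with two new $<$-large incident elements pulled from $D^*$ via cofinality (Lemma~\ref{cofinal_points}) and $\kappa^+$-saturation, iterating this into the countable closure process. I expect the bookkeeping here — ensuring the enlargement stays countable, stays $cl_<$-closed, and that the finally-assembled linear order genuinely witnesses $t(\cdot) = 2$ at every step while still satisfying $B = A \cup cl_{<_+}(a)$ — to be the technical heart of the argument, and I would present it as a single induction constructing the chain $(B_n)$ together with partial F-orderings that cohere in the limit.
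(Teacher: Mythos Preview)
Your overall diagnosis is right --- the obstacle is exactly the elements of $cl_<(a)-A$ that have fewer than two $R_<$-predecessors --- but the repair mechanism you sketch does not work, and one of your closure steps is misdirected.

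First, the projective-subplane closure $\langle B_n\rangle_{D^*}$ should not appear: the lemma asks for $B = A \cup cl_{<_+}(a)$, which is almost never a projective subplane. Taking that closure inflates $B$ and destroys any hope of satisfying item~(4).

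Second, and more seriously, ``enlarging $B$ with two new $<$-large incident elements'' does not fix a defective $b$. If those elements are $<$-large they are $>b$, so in the order $<$ they are \emph{successors} of $b$, not predecessors. To place them below $b$ in a new order $<_+$ you would need to supply each of them with two $<_+$-predecessors of its own, and you are back where you started. Cofinality and saturation alone do not break this regress.

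The paper's move is to bring in elements that carry their own infinite supply of predecessors: by Remark~\ref{F_over_empty} (built on Theorem~\ref{embedding_th} and saturation of $(D^*,R^*_<)$) one can find points $c_0,c_1,c_2$ that are $F$-constructible over $\emptyset$, with $cl_<(c_i)$ pairwise disjoint and disjoint from $B_0$. Setting $c_3=(b\vee c_0)\wedge(c_1\vee c_2)$ and reordering so that $(A\cup cl_<(b))\setminus\{b\}$, then the $cl_<(c_i)$, then $c_3$, then $b$ are constructed in that order, one now has two predecessors for $b$; the sole remaining defect is transferred to $c_3$. The point is that $c_3$ sits at \emph{strictly greater $R$-distance from $a$} than $b$ did. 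Iterating $\omega$ times therefore pushes every defect out to infinity: since $cl_{<_+}(a)$ consists precisely of the elements at finite $R_{<_+}$-distance from $a$, in the limit every such element has type~$2$, giving the $F$-order $<_+$ and item~(4) simultaneously. This distance-increasing trick, together with the use of $F$-constructible-over-$\emptyset$ auxiliary points, is the missing idea in your proposal.
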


	\begin{proof} Let $B_0 = A \cup cl_<(a)$, then clearly $cl_<(B_0) = B_0$. Let $<_0$ be $< \restriction B_0$. Suppose that $<_0$ is not an F-ordering over $A$, then there is $b \in cl_{<_0}(a) - A$ such that $k_b := |\{x \in B : R_{<_0}(x, b)\}| < 2$ (notice that $k_b \leq 2$ since $cl_<(B_0) = B_0$). We show how to deal with the case $k_b = 1$, the case $k_b = 0$ is similar. Let $x_0$ be a witness for $k_b = 1$. Without loss of generality $b$ is a point and $x_0$ is a line. Arguing as in Remark \ref{F_over_empty}, by Theorem \ref{embedding_th}, compactness and $\kappa^+$-saturation of $(D^*, <)$ and $(D^*, R^*_{<})$ (cf. Context~\ref{notation_ultraprod}), we can find $c_0, c_1, c_2$ such that they are F-constructible over $\emptyset$ by the order induced by $<$, and $cl_<(a_i) \cap (cl_<(a_j) \cup B_0) = \emptyset$ for all $i < j < 3$.
Let now $c_3 = (b \vee c_0) \wedge (c_1 \vee c_2)$, $B'_1 = B_0 \cup \bigcup_{i < 3} cl_<(c_i) \cup \{ c_3 \}$ and $B_1 = B'_1 \cup \{ p \vee q : p \neq q \text{ points of } B'_1 \}$. Then $cl_<(B_1) = B_1$ and $B_1$ admits an HF-ordering $<_1$ over $A$ in such a way that we first construct $(A \cup cl_<(b)) - \{ b \}$, then $cl_<(c_i)$, $i < 3$, then $c_3$ and then $b$. Notice that with respect to $<_1$ we have that $b$ is such that $|\{x \in B_1 : R_{<_1}(x, b)\}| = 2$, as witnessed by $x_0$ and $c_1 \vee c_2$. Nonetheless, $|\{x \in B_1 : R_{<_1}(x, c_3)\}| < 2$, but on one hand the distance (cf. Definition \ref{def_path}) between $a$ and $c_3$ in the directed graph $(B_1, R_{<_1})$ is strictly greater than the distance (cf. Definition \ref{def_path}) between $a$ and $b$ in the directed graph $(B_0, R_{<_0})$, and on the other hand letting $B_2 = B_1 \cup cl_{<_1}(a)$ we are in the same situation as before, by the choice of the $c_i$. Thus, iterating this process $\omega$-many times we find $B\subseteq D^*$ such that $cl_<(B) = B$ and $B$ is F-constructible over $A$, say by the ordering $<_{\infty} = <_{+}$, and easily we see that we have $B = A \cup cl_{<_+}(a)$, as wanted.
\end{proof}

	\begin{lemma}\label{embed_F_closed} Let $A, B \models T$ and $<_A$ and $<_B$ be HF-orderings over $\emptyset$ of $A$ and $B$, respectively. Let $C \subseteq A$ and $D \subseteq B$ be such that $cl_{<_A}(C) = C$, $cl_{<_B}(D) = D$ and $g: C \cong D$. Let $C \subseteq C^+ \subseteq A$ be such that there is an F-ordering $<_+$ of $C^+$ over $C$ and $f : C^+ \rightarrow B$ is such that $g \subseteq f$ and $f$ is an $(R_{<_+}, R_{<_B})$-preserving embedding (cf. Notation~\ref{notation_preserving}). Then $D^+ := f(C^+)$ is such that $cl_{<_B}(D^+) = D^+$.
\end{lemma}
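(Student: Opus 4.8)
The plan is to show $cl_{<_B}(D^+) = D^+$ by unwinding the definition of the closure operator (Definition~\ref{closure_def}) and using that $f$ is an $R$-preserving embedding. Fix $d \in D^+$ and suppose $b \in B$ satisfies $R_{<_B}(b,d)$; I must show $b \in D^+$. Write $d = f(c)$ for a (unique) $c \in C^+$. The key observation is that, since $(B, R_{<_B})$ is an HF-digraph over $\emptyset$, we have $|\{x \in B : R_{<_B}(x,d)\}| \leq 2$ (Definition~\ref{HF-digraph}(3), or rather Definition~\ref{def_nwf_cons}(\ref{pred_R}) together with Remark~\ref{remark_HF-digraph}(1)). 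So it suffices to locate \emph{all} the $R_{<_B}$-predecessors of $d$ inside $D^+$: if I can produce elements of $C^+$ mapping onto them, I am done.

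First I would split on whether $c \in C$ or $c \in C^+ - C$. If $c \in C$, then $d = g(c) \in D$, and since $cl_{<_B}(D) = D$ we get immediately $b \in D \subseteq D^+$. So assume $c \in C^+ - C$. Because $<_+$ is an \emph{F}-ordering of $C^+$ over $C$, the element $c$ has exactly two elements $x_1, x_2$ of $C^+$ incident with it which are either in $C$ or $<_+$-below $c$; that is, $R_{<_+}(x_1, c)$ and $R_{<_+}(x_2, c)$, and these are the only $R_{<_+}$-predecessors of $c$ in $C^+$. Since $f$ is $(R_{<_+}, R_{<_B})$-preserving, $R_{<_B}(f(x_1), d)$ and $R_{<_B}(f(x_2), d)$ hold, with $f(x_1), f(x_2) \in D^+$ and $f(x_1) \neq f(x_2)$ (as $f$ is injective). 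Now $b$, $f(x_1)$, $f(x_2)$ are all $R_{<_B}$-predecessors of $d$ in $B$, and there are at most two of those, so $b \in \{f(x_1), f(x_2)\} \subseteq D^+$, as required.

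The one point that needs a small amount of care — and where I expect the only real subtlety to lie — is the claim that in $(B, R_{<_B})$ every element has at most two $R_{<_B}$-predecessors, and more precisely that the $R_{<_+}$-predecessors of $c$ in $C^+$ do not get "merged" or "augmented" under $f$ in a way that hides $b$. This is exactly the content of $<_B$ being an HF-ordering of $B$ over $\emptyset$: by Definition~\ref{def_nwf_cons}(\ref{pred_R}) and Definition~\ref{HF-digraph}(3), $|\{x \in B : R_{<_B}(x, d)\}| \leq 2$ for \emph{every} $d \in B - \emptyset = B$, which is all I used. (Note the degenerate case where $c$ has fewer than two predecessors in $C^+$ — impossible here since $<_+$ is an F-ordering, so $c$ has exactly two — and the case $c \in C$ handled above, cover everything, since $C^+ = C \cup (C^+ - C)$.) Thus $cl_{<_B}(D^+) \subseteq D^+$, and the reverse inclusion is trivial by Definition~\ref{closure_def}(1), so $cl_{<_B}(D^+) = D^+$. $\hfill\qed$
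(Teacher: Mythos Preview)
Your proof is correct and follows essentially the same line as the paper's: reduce to the case $c \in C^+ - C$ (the paper does this by noting it suffices to consider $d \in D^+ - D$, using $cl_{<_B}(D) = D$ implicitly, while you make the case split explicit), then use that the F-ordering gives exactly two $R_{<_+}$-predecessors $x_1, x_2$ of $c$, push them forward via the $R$-preserving injective $f$, and conclude using $|\{x \in B : R_{<_B}(x,d)\}| \leq 2$. The only difference is presentational; your treatment of the case $c \in C$ and your remarks on the ``at most two predecessors'' bound are slightly more detailed than the paper's, but the argument is the same.
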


	\begin{proof} It suffices to show that for $a \in D^+ - D$ and $b \in B$ such that $R_{<_B}(b, a)$ holds, we have that $b \in D^+$. Let $x \in C^+ - C$ be such that $f(x) = a$. Then there are $y \neq z \in D^+$ such that $R_{<_+}(y, x)$ and $R_{<_+}(z, x)$ hold. Then $R_{<_B}(f(y), a)$ and $R_{<_B}(f(z), a)$ also hold and $f(y) \neq f(z)$ (since $f$ is $R$-preserving and injective). Since $|\{c \in B : R_{<_B}(c, a) \}| \leq 2$, necessarily $b \in \{ f(y), f(z) \}$, and so $b \in D^+$.
\end{proof}

	\begin{theorem1}\label{completeness_theorem} The theory $T$ of open projective planes is complete.
\end{theorem1}

	\begin{proof} Let $M, N \models T$ and $A$ and $B$ be countable elementary substructures of $M$ and $N$, respectively. Let $(A^*, <_A)$ and $(B^*, <_B)$ be as in Context \ref{notation_ultraprod}, with respect to $A$ and $B$, respectively, and with respect to the same $\kappa$, $I$ and $\mathfrak{U}$ (cf. Context~\ref{notation_ultraprod}). We show that $A^*$ and $B^*$ are elementary equivalent, clearly this suffices. Specifically, we show that Player II has a winning strategy in the Ehrenfeucht-Fra\"iss\'e game $EF_\omega(A^*, B^*)$ of length $\omega$. We play the game as follows: after every move $n$ we have a partial isomorphism $f_n : A_n \rightarrow B_n$ such that $A_n$ and $B_n$ are countable projective subplanes of $A^*$ and $B^*$, respectively, and $cl_{<_A}(A_n) = A_n$ and $cl_{<_B}(B_n) = B_n$. For simplicity, the game starts with $n = -1$ and we let $f_{-1} = \emptyset$. Furthermore, by Observation~\ref{obs_duality}, we can assume that Player I chooses only points. Suppose then that Player~I chooses a point $a \in A^*$. By Lemma~\ref{lemma_Fcons}, we can find a countable $A_n \cup \{ a \} \subseteq C_{n+1} \subseteq A^*$ such that $cl_{<_A}(C_{n+1}) = C_{n+1}$ and $C_{n+1}$ admits an F-ordering $<_{n+1}$ of $C_{n+1}$ over $A_n$ such that $C_{n+1} = A_n \cup cl_{<_{n+1}}(a)$. We make the following claim, which we will prove below (after the end of the current proof).

 \begin{claim}\label{embed_claim} {\em There is an $(R_{<_{n+1}}, R_{<_B})$-preserving embedding (cf. Notation \ref{notation_preserving}) $g_{n+1}: C_{n+1} \rightarrow B^*$ such that $g_{n+1}$ extends $f_n$.}
\end{claim}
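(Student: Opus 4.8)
The plan is to build $g_{n+1}$ step by step following the $F$-ordering $<_{n+1}$ of $C_{n+1}$ over $A_n$, extending $f_n$ along the way by using the defining property of $F$-construction (two incidences added at a time) together with the $\kappa^+$-saturation and the cofinality properties of $(B^*, <_B)$ established in Lemma~\ref{cofinal_points} and Context~\ref{notation_ultraprod}. Concretely, enumerate $C_{n+1} - A_n = cl_{<_{n+1}}(a) - A_n$ as $(y_k)_{k < \gamma}$ in $<_{n+1}$-increasing order, where $\gamma \leq \omega$; at stage $k$ we will have an $(R_{<_{n+1}} \restriction (A_n \cup \{y_j : j < k\}), R_{<_B})$-preserving embedding $g^k$ extending $f_n$, and we must extend it to $y_k$. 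Since $<_{n+1}$ is an $F$-ordering over $A_n$, the element $y_k$ is incident in $C_{n+1}$ with exactly two elements $u, v$ that are either in $A_n$ or among $\{y_j : j < k\}$; so $y_k = g(u,v)$ is the join (if $y_k$ is a line) or meet (if $y_k$ is a point) of the two already-placed elements $g^k(u), g^k(v)$, and we are forced to set $g^{k+1}(y_k) := g^k(u) \vee g^k(v)$ or $g^k(u) \wedge g^k(v)$. The only thing to check is that this is well-defined (i.e. $g^k(u) \neq g^k(v)$, automatic from injectivity, and they are non-incident/non-collinear as appropriate, which follows because $u,v$ are non-incident in $C_{n+1}$ and $g^k$ is a partial isomorphism of partial planes) and that it remains $R$-preserving: the new incidences at $y_k$ are precisely with $g^k(u)$ and $g^k(v)$, and $R_{<_{n+1}}(u, y_k)$ and $R_{<_{n+1}}(v, y_k)$ hold while no other $R_{<_{n+1}}$-edge points into $y_k$, so setting the $R_{<_B}$-edges into $g^{k+1}(y_k)$ to be exactly $g^k(u) \to g^{k+1}(y_k)$ and $g^k(v) \to g^{k+1}(y_k)$ matches; we must simply know that in $B^*$ the element $g^k(u) \vee g^k(v)$ is a \emph{new} element not previously in the range, with no spurious incidences. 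This is where saturation enters: rather than doing it one element at a time, I would realize the whole countable type describing a copy of $C_{n+1}$ over $g^k$'s range at a single stroke.

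A cleaner way to organize the same idea, which I expect to be the actual argument: reduce to the situation already handled in Theorem~\ref{embedding_th} and Lemma~\ref{lemma_Fcons}. The image $f_n(A_n) = B_n \subseteq B^*$ is a countable projective subplane with $cl_{<_B}(B_n) = B_n$, and $C_{n+1}$ is obtained from $A_n$ by adjoining $cl_{<_{n+1}}(a)$, where $<_{n+1}$ is an $F$-ordering over $A_n$. By Proposition~\ref{lemma_initial_seg} applied inside $B^*$ we may re-arrange $<_B$ so that $B_n$ forms an initial segment, and then Theorem~\ref{embedding_th} (its ``furthermore'' part, placing the new elements $<_B$-cofinally, above everything constructed so far) produces inside $B^*$ a copy of $F(X)$ over $B_n$ with the prescribed $R$-structure. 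Since $C_{n+1} \cong A_n \cup cl_{<_{n+1}}(a)$ embeds $R$-preservingly into $F(X)$ over $A_n$ (the single $F$-constructible point $a$ together with its $R_{<_{n+1}}$-closure sits inside the $F$-construction of $F(X)$ over $X$, exactly as in Remark~\ref{F_over_empty} and Example~\ref{example_nis4}), composing gives the desired $g_{n+1}: C_{n+1} \to B^*$ with $f_n \subseteq g_{n+1}$ and $g_{n+1}$ being $(R_{<_{n+1}}, R_{<_B})$-preserving. The cofinality hypothesis (cofinality $\geq \omega_1$, from Lemma~\ref{cofinal_points}) is what guarantees we can keep pushing the new elements up and never exhaust the order; $\kappa^+$-saturation of $(B^*, R^*_{<})$ and $(B^*, <_B)$ is what lets us realize the relevant (at most countable) types over $B_n$.

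The main obstacle, and the one deserving genuine care, is ensuring $R$-preservation at the finitely-many ``defect'' elements of $cl_{<_{n+1}}(a)$ — the points $y_k$ that happen to be joins of points or meets of lines coming partly from $A_n$ — because there the edge $R_{<_{n+1}}(u, y_k)$ with $u \in A_n$ must be matched by $R_{<_B}(f_n(u), g_{n+1}(y_k))$, and this forces $g_{n+1}(y_k)$ to be $<_B$-above $f_n(u)$; but by construction $g_{n+1}(y_k)$ is placed cofinally above all of $B_n$, so this is fine. One must also confirm that $C_{n+1}$, as an abstract partial plane with its $R_{<_{n+1}}$-structure, really does embed $R$-preservingly into $(F(X), R_{<_+})$ over $A_n$ — i.e. that the ``$F$-constructible over $A_n$'' data of $a$ is faithfully a sub-$F$-construction of the standard one building $F(X)$; this is essentially the content of Remark~\ref{F_over_empty} relativized over $A_n$ instead of $\emptyset$, and is routine given Lemma~\ref{embed_F_closed}, which tells us the image of an $F$-closed-over-$C$ extension is again $cl_{<_B}$-closed, so no accidental incidences or accidental $R$-edges are created in $B^*$. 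Everything else — well-definedness of joins and meets, injectivity, the partial-isomorphism condition — is immediate from the partial-plane axioms and from $g_{n+1}$ being a composite of embeddings.
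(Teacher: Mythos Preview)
Your proposal has a fundamental gap stemming from a misreading of the $F$-ordering $<_{n+1}$ produced by Lemma~\ref{lemma_Fcons}: it is \emph{not} well-founded. Every element of $C_{n+1} - A_n = cl_{<_{n+1}}(a)$ has exactly two $R_{<_{n+1}}$-predecessors (this is what ``$F$-ordering'' means), and since $A_n$ is a projective subplane, no such element can have both predecessors in $A_n$ (else it would equal their join or meet and hence lie in $A_n$). Thus $C_{n+1}-A_n$ has no $<_{n+1}$-least element, and your enumeration $(y_k)_{k<\gamma}$ in $<_{n+1}$-increasing order with $\gamma\leq\omega$ simply does not exist. Concretely, the proof of Lemma~\ref{lemma_Fcons} plants inside $cl_{<_{n+1}}(a)$ elements of the kind described in Remark~\ref{F_over_empty} and Construction~\ref{construction_HF_empty}, whose $R$-closure is an infinite binary tree with no leaves. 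So your step-by-step induction cannot start.

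Your second approach fails for the same structural reason. You assert that $(C_{n+1},R_{<_{n+1}})$ embeds $R$-preservingly into $(F(X),R_{<_+})$ over $A_n$, but this is false: in $(F(X),R_{<_+})$ every descending $R$-chain from an element of $F(X)-A_n$ reaches the base $X$ (where the $x_i$'s have no $R_{<_+}$-predecessors) in finitely many steps, whereas $(C_{n+1},R_{<_{n+1}})$ contains infinite descending $R$-chains disjoint from $A_n$. An $R$-preserving embedding must respect this invariant, so none exists. Theorem~\ref{embedding_th} gives you a copy of $F(X)$ inside $B^*$, but $C_{n+1}$ does not sit inside $F(X)$ with the required $R$-structure; the appeal to Remark~\ref{F_over_empty} ``relativized over $A_n$'' does not help, because that remark already uses compactness in $D^*$ to produce the non-well-founded object, not a sub-$F$-construction of $F(X)$.

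What the paper actually does is the step you are missing: it truncates $cl_{<_{n+1}}(a)$ at finite $R$-distance from $a$, obtaining finite sets $Y_i$, and for each $i$ separately builds an $R$-preserving embedding $g_{(n+1,i)}\colon A_n\cup Y_i\to B^*$ extending $f_n$ (via Theorem~\ref{embedding_th}, after handling the ``boundary'' points of $Y_i$ that touch a line of $A_n$). These $g_{(n+1,i)}$ are \emph{not} mutually compatible; their role is only to witness finite satisfiability, over $B_n$, of the countable $(R,<)$-type describing a copy of $(C_{n+1},R_{<_{n+1}})$. Then $\kappa^+$-saturation of $(B^*,<_B,R_{<_B})$ realizes that type, yielding $g_{n+1}$. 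The compactness step is not a convenience but the essential content, precisely because $<_{n+1}$ is ill-founded and no direct inductive construction is available.
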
 
\newline Now, by the claim and Lemma \ref{embed_F_closed}, we have that $D_{n+1} := g_{n+1}(C_{n+1})$ is such that $cl_{<_B}(D_{n+1}) = D_{n+1}$. It is now easy to extend $g_{n+1}$ to $f_{n+1}$ so that the domain of $f_{n+1}$ is $A_{n+1} := \langle C_{n+1} \rangle_{A^*}$ (cf. Definition~\ref{subplane}) and its codomain is $B_{n+1} := \langle D_{n+1} \rangle_{D^*}$, and clearly $cl_{<_A}(A_{n+1}) = A_{n+1}$ and $cl_{<_B}(B_{n+1}) = B_{n+1}$. Hence, player II has a winning strategy in the Ehrenfeucht-Fra\"iss\'e game $EF_\omega(A^*, B^*)$.
\end{proof}

	\begin{proof}[Proof of Claim \ref{completeness_theorem}.1] Since $C_{n+1} = A_n \cup cl_{<_{n+1}}(a)$ we have that $C_{n+1} = A_n \cup\bigcup_{0 < i < \omega} Y_i$, where, for $0 < i < \omega$, $Y_i$ is the set of points $y$ in $cl_{<_{n+1}}(a)$ such that in the directed graph $(cl_{<_{n+1}}(a), R_{<_{n+1}})$ the distance (cf. Definition \ref{def_path}) between $a$ and $y$ is $\leq 2i$, together with all the lines $\ell$ which are incident with at least two points from $Y_i \cup A_n$ such that at least one of these, say $b$, is such that $R(b, \ell)$ holds and $b \in Y_i$. For $0 < i < \omega$, let also $X_i$ be the set of all points $x \in Y_i - A_n$ such that the distance between $a$ and $x$ is exactly $2i$. Let $0 < i < \omega$ and let $D$ be the set of $x \in X_i$ such that $x$ is incident with at least one line from $A_n$ (and thus exactly one, since $A_n$ is assumed to be a projective subplane of $A^*$). For each $x \in D$ choose two distinct points $x_0, x_1 \in A^*$ such that $x_0$ is not incident with any line from $A_n \cup X_i$ and $x_1$ is incident with only $x_0 \vee x_1$. Let now $E = (X_i - D) \cup \{ x_0, x_1 : x \in D \}$. Then $A_n \cup Y_i$ is F-constructible from $A_n \cup E \cup X_i$ and $E \cup X_i$ satisfy the assumption of Theorem \ref{embedding_th}, and so we can find an $(R_{<_{n+1}}, R_{<_B})$-preserving embedding $g_{(n+1, i)}: A_n \cup Y_i \rightarrow B^*$. By compactness and $\kappa^+$-saturation of $(B^*, <_B)$ and $(B^*, R_{<_B})$ (cf. Context~\ref{notation_ultraprod}) this suffices to find the wanted $(R_{<_{n+1}}, R_{<_B})$-preserving embedding $g_{n+1}: C_{n+1} \rightarrow B^*$.
\end{proof}

\section{Elementary Substructures}\label{sec_elem_sbs}

	Recall that $T$ denotes the theory of open projective planes (cf.~Notation~\ref{notation_theory}). Also, we denote by $\preccurlyeq$ the elementary submodel relation.
	
	\begin{remark}\label{remark_induced_HF} Let $A$, $B$ and $C$ be partial planes. Then $A \leq_{HF} B$ and $C \subseteq B$ implies that $C \cap A \leq_{HF} C$ (for the definition of $\leq_{HF}$ cf. Definition \ref{def_nwf_cons}(\ref{leq_hf})).
\end{remark}

%

	\begin{lemma}\label{first_lemma_elem_sub} If $A, B \models T$, $A \subseteq B$ and $B$ is HF-constructible over $A$, then $A \preccurlyeq B$.
\end{lemma}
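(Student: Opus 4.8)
The plan is to derive the lemma from Theorem~\ref{completeness_theorem} by passing to saturated ultrapowers and re-using the back-and-forth system hidden in its proof. First I would exploit the hypothesis $A\leq_{HF}B$: fixing (via Proposition~\ref{countable_open_HF_con}) an HF-ordering $<_{A}$ of $A$ over $\emptyset$ and an HF-ordering of $B$ over $A$, their concatenation is an HF-ordering $<$ of $B$ over $\emptyset$ in which $A$ is an initial segment, so $cl_{<}(A)=A$ and $<\restriction A=<_{A}$. Then, following Context~\ref{notation_ultraprod}, I would pick one non-principal ultrafilter $\mathfrak{U}$ on a set $I$ making the ultrapowers of $A$ and of $B$ (and their expansions by $R_{<}$ and by an order) $\kappa^{+}$-saturated for $\kappa\geq\max\{2^{\aleph_{0}},|B|^{+}\}$, and form $A^{*}=A^{I}/\mathfrak{U}\subseteq B^{*}=B^{I}/\mathfrak{U}$ together with the compatible HF-ordering $<_{*}$ of $B^{*}$ over $\emptyset$ with $R_{<_{*}}=R_{<}^{*}$ and $<_{*}\restriction B=<$. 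By \L o\'s's theorem $A\preccurlyeq A^{*}$, $B\preccurlyeq B^{*}$, and $A^{*}$ is a projective subplane of $B^{*}$; and since $cl_{<}(A)=A$ and $R_{<_{*}}=R_{<}^{*}$, a one-line \L o\'s computation gives $cl_{<_{*}}(A^{*})=A^{*}$, so $<_{*}\restriction A^{*}$ is an HF-ordering of $A^{*}$ over $\emptyset$ whose edge relation is $R_{<_{*}}\restriction A^{*}$. (If this bookkeeping feels delicate, Proposition~\ref{lemma_initial_seg} lets one replace $<_{*}$ by an HF-ordering inducing the same digraph in which $A^{*}$ is literally an initial segment, after which $cl_{<_{*}}(A^{*})=A^{*}$ is obvious.)

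It then suffices to prove $A^{*}\preccurlyeq B^{*}$: granting this, for every finite tuple $\bar a$ from $A$ and every formula $\varphi$ we get $A\models\varphi(\bar a)\Leftrightarrow A^{*}\models\varphi(\bar a)\Leftrightarrow B^{*}\models\varphi(\bar a)\Leftrightarrow B\models\varphi(\bar a)$, and since $A\subseteq B$ this is exactly $A\preccurlyeq B$. To prove $A^{*}\preccurlyeq B^{*}$ I would form the family $\mathcal{S}$ of all partial isomorphisms $f$ from a countable $cl_{<_{*}}$-closed projective subplane of $A^{*}$ onto a countable $cl_{<_{*}}$-closed projective subplane of $B^{*}$ that preserve the edge relation $R_{<_{*}}$. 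The proof of Theorem~\ref{completeness_theorem}---which runs through Lemma~\ref{lemma_Fcons}, Claim~\ref{embed_claim} and Lemma~\ref{embed_F_closed}, and uses nothing about the map being extended beyond its lying in $\mathcal{S}$---shows exactly that $\mathcal{S}$ has the back-and-forth property (by Observation~\ref{obs_duality} it is enough to consider adding points): any $f\in\mathcal{S}$ extends within $\mathcal{S}$ to cover a prescribed point of $A^{*}$, and symmetrically on the other side.

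The step that upgrades $\equiv$ to $\preccurlyeq$ is the observation that $\mathrm{id}_{A_{0}}\in\mathcal{S}$ for every countable $cl_{<_{*}}$-closed projective subplane $A_{0}$ of $A^{*}$: such $A_{0}$ is then also a countable $cl_{<_{*}}$-closed projective subplane of $B^{*}$ because $cl_{<_{*}}(A^{*})=A^{*}$, and the identity is trivially $R$-preserving since $R_{<_{*}}$ agrees on $A_{0}$ whether read in $A^{*}$ or in $B^{*}$. As $\mathcal{S}$ is then a nonempty back-and-forth system, the standard Fra\"iss\'e argument yields $(A^{*},\bar c)\equiv(B^{*},\bar c)$ for every finite tuple $\bar c$ in the domain of a member of $\mathcal{S}$; applied to $\mathrm{id}_{A_{0}}$, and noting that every finite tuple of $A^{*}$ lies in some such $A_{0}$ (close off alternately under $\langle\,\cdot\,\rangle_{A^{*}}$ and under $cl_{<_{*}}$-predecessors, $\omega$ times), this gives $A^{*}\preccurlyeq B^{*}$, and with it the lemma.

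I expect the only real obstacle to be verifying carefully that $\mathcal{S}$ genuinely is a back-and-forth system, i.e.\ that the extension step in the proof of Theorem~\ref{completeness_theorem} applies to an \emph{arbitrary} member of $\mathcal{S}$ and not merely to the maps arising along one fixed play, and in particular that $R$-preservation survives when one closes a countable $cl_{<_{*}}$-closed subplane off under joins and meets (which forces one to interleave those joins and meets into the HF-ordering so the two-predecessor bound is never violated). The remaining points---the compatibility bookkeeping for $<_{*}$ flagged above, and the routine fact that a projective subplane of a model of $T$ is again a model of $T$, so that the members of $\mathcal{S}$ are honest partial isomorphisms---are straightforward.
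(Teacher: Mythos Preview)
Your proposal is correct and follows essentially the same route as the paper: pass to $\kappa^{+}$-saturated ultrapowers $A^{*}\subseteq B^{*}$ with $B^{*}$ HF-constructible over $A^{*}$, and replay the Ehrenfeucht--Fra\"iss\'e argument from the proof of Theorem~\ref{th_complete} starting from the identity on a countable $cl$-closed projective subplane of $A^{*}$ containing the given parameters. One simplification worth noting: the paper does \emph{not} require the maps $f_{n}$ in the back-and-forth system to preserve $R_{<_{*}}$---they are merely partial isomorphisms (in the incidence language) between countable $cl$-closed projective subplanes; $R$-preservation is only used transiently, for the auxiliary embedding $g_{n+1}$ produced by Claim~\ref{embed_claim}, and is then discarded when one passes to the projective closure. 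If you drop the $R$-preservation clause from your definition of $\mathcal{S}$, the ``only real obstacle'' you flag (whether $R$-preservation survives closing under joins and meets) simply evaporates, and the argument goes through verbatim.
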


	\begin{proof} Let $A, B$ be as in the assumption of the lemma. Let $(A^*, <_A)$ and $(B^*, <_B)$ be as in Context \ref{notation_ultraprod}, with respect to $A$ and $B$, respectively, and with respect to the same $\kappa$, $I$ and $\mathfrak{U}$ (cf. Context~\ref{notation_ultraprod}). It is easy to see that $A^*$ embeds naturally in $B^*$, and that with respect to this embedding $B^*$ is HF-constructible over $A^*$, since $B$ is HF-constructible over $A$. Hence, without loss of generality we can assume that $A^*$ is a substructure of $B^*$ and that $B^*$ is HF-constructible over $A^*$. It suffices to show that Player II has a winning strategy in the Ehrenfeucht-Fra\"iss\'e game $EF_\omega((A^*, D), (B^*, D))$ of length $\omega$, for every finite set $D \subseteq A^*$, since this implies easily that $A \preccurlyeq B$. Notice now that for every finite set $D \subseteq A^*$ we can find $D \subseteq C \subseteq A^*$ such that $C$ is a countable projective subplane of $A^*$ and $cl_{<_A}(C) = C$. Thus, Player II wins the Ehrenfeucht-Fra\"iss\'e game $EF_\omega((A^*, D), (B^*, D))$ playing as in the proof of Theorem \ref{completeness_theorem} starting from $f_{-1} = id_D$.
\end{proof}

	\begin{notation}\label{notation_acl} Given a structure $M$ and $A \subseteq M$, we denote by $acl_M(A) = acl(A)$ the algebraic closure (in the usual first-order sense) of $A$ in $M$.
\end{notation}

	\begin{lemma}\label{lemma_acl} Suppose that $A \models T$ and $B \subseteq A$ is such that $acl_A(B) = B$ (cf. Notation \ref{notation_acl}). Then $A$ is HF-constructible over $B$.
\end{lemma}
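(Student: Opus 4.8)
The plan is to isolate the combinatorial content of being HF-constructible over $B$ and then to feed in the hypothesis $acl_A(B)=B$. The combinatorial input is the following relative version of Proposition~\ref{countable_open_HF_con} (equivalently, of Fact~\ref{open_implies_constr}): \emph{if $A$ is open and no finite $F\subseteq A\setminus B$ has the property that every element of $F$ is incident with at least three elements of $F\cup B$, then $A$ is HF-constructible over $B$.} I would prove this exactly as Proposition~\ref{countable_open_HF_con}: for each finite $F\subseteq A\setminus B$ one builds an ordering of $F$ witnessing that it is HF-constructible over $B$ by repeatedly removing from the current configuration an element incident with at most two of the remaining elements of $F\cup B$ (one exists precisely because, by hypothesis, no subset of $A\setminus B$ is ``bad over $B$'') and reading the removals in reverse order; then one fixes such an ordering $<_F$ for each finite $F\subseteq A\setminus B$, takes an ultrafilter $\mathfrak U$ on these sets with $\{F':F\subseteq F'\}\in\mathfrak U$ for all $F$, replaces $<_F$ by the unique order $<^*_F$ with $\{F':\,<_{F'}\!\restriction F\,=\,<^*_F\}\in\mathfrak U$ (these cohere: $F\subseteq F'$ gives $<^*_F\,=\,<^*_{F'}\!\restriction F$), and sets $<^*=\bigcup_F<^*_F$; any violation of the HF-ordering condition for $<^*$ already occurs inside some finite $F$, so $<^*$ is an HF-ordering of $A$ over $B$. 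Note this route avoids ultrapowers entirely.

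It remains to rule out, using $acl_A(B)=B$, a finite $F\subseteq A\setminus B$ in which every element is incident with at least three elements of $F\cup B$. Suppose such an $F$ exists, and pass to a monster model $\mathfrak M\succcurlyeq A$ of $T$; since $A\preccurlyeq\mathfrak M$ we have $acl_{\mathfrak M}(B)=acl_A(B)=B$, so, writing $F=(f_1,\dots,f_m)$ for an enumeration of the (distinct) elements of $F$, each $f_k\notin acl(B)$. Let $p(x_1,\dots,x_m)=\mathrm{tp}(F/B)$. The key claim is: \emph{for every $n$ there are realizations $F_1,\dots,F_n$ of $p$ in $\mathfrak M$ that are coordinatewise distinct}, meaning $F_i^k\ne F_j^k$ whenever $i\ne j$ (for every position $k$). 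Indeed, having found $F_1,\dots,F_j$ (each of the form $\rho_i(F)$ with $\rho_i\in\mathrm{Aut}(\mathfrak M/B)$), the partial type $p(\bar x)\cup\{x_k\ne F_i^k:k\le m,\ i\le j\}$ is consistent: otherwise $p\vdash\bigvee_{k,i}(x_k=F_i^k)$, so every $\tau\in\mathrm{Aut}(\mathfrak M/B)$ satisfies $\tau(f_k)=\rho_i(f_k)$ for some $(k,i)$, i.e.\ $\mathrm{Aut}(\mathfrak M/B)$ is covered by the finitely many cosets $\rho_i\,\mathrm{Stab}(f_k)$; but a group which is a union of finitely many cosets of subgroups must contain one of those subgroups with finite index (a theorem of B.\,H.~Neumann), so some $\mathrm{Stab}(f_k)$ has finite index in $\mathrm{Aut}(\mathfrak M/B)$, whence $f_k\in acl(B)$ — a contradiction.

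Finally, take $n=3$ coordinatewise-distinct realizations $F_1,F_2,F_3$ of $p$, and let $\hat B$ be the finite set of elements of $B$ incident with some element of $F$; since the incidences between $F$ and $B$ are recorded in $p$, the same elements of $B$ are incident with each $F_i$. Then $G:=F_1\cup F_2\cup F_3\cup\hat B$ is a finite subconfiguration of $\mathfrak M$ in which every element is incident with at least three elements of $G$: each element of $F_i$ already has its three-or-more incidences with $F_i\cup B$ inside $F_i\cup\hat B\subseteq G$; and each $b\in\hat B$, fixing a position $k_0$ with $f_{k_0}$ incident with $b$, is incident with the three distinct elements $F_1^{k_0},F_2^{k_0},F_3^{k_0}$ of $G$. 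This contradicts the openness of $\mathfrak M$ (which holds because $\mathfrak M\models T$). Hence no such $F$ exists, and $A$ is HF-constructible over $B$ by the first paragraph. The two non-routine ingredients are the relativised Siebenmann-style lemma of the first paragraph (its proof copies Proposition~\ref{countable_open_HF_con}, so the real work there is bookkeeping) and the coordinatewise-distinct realizations of the second paragraph; the latter, where $acl_A(B)=B$ is actually used — via the covering argument for $\mathrm{Aut}(\mathfrak M/B)$ — is the point I expect to require the most care.
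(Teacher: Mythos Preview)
Your argument is correct, but it follows a genuinely different route from the paper's.

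The paper argues externally: embed $A$ into a $\kappa$-saturated $D\succcurlyeq A$ with $\kappa>|A|$, use $acl(B)=B$ to produce $\kappa$-many elementary copies $A_i\preccurlyeq D$ of $A$ over $B$ with pairwise intersection $B$, take a global HF-ordering $<$ of $D$ over $\emptyset$ (Proposition~\ref{countable_open_HF_con}), choose any $cl_<$-closed $C\supseteq B$ of size $<\kappa$, and find by cardinality some $i$ with $C\cap A_i=B$; then Remark~\ref{remark_induced_HF} gives $B=C\cap A_i\leq_{HF}A_i$, and one pulls back along $f_i^{-1}$. No combinatorial obstruction is ever named.

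Your approach instead isolates the obstruction explicitly (a finite $F\subseteq A\setminus B$ that is ``confined over $B$''), proves the relative Siebenmann criterion by the same ultrafilter bookkeeping as Proposition~\ref{countable_open_HF_con}, and then eliminates the obstruction with a model-theoretic trick: three coordinatewise-distinct realizations of $\mathrm{tp}(F/B)$ together with $\hat B$ form a finite confined configuration in $\mathfrak M$, contradicting openness. The Neumann covering argument is a clean way to produce those realizations from $F\cap acl(B)=\emptyset$ alone.

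What each buys: the paper's proof is shorter and reuses already-available tools without introducing side results. Your proof yields as a by-product a useful \emph{relative openness} criterion (``$B\leq_{HF}A$ iff no finite $F\subseteq A\setminus B$ is confined over $B$'') and makes the role of $acl(B)=B$ more transparent. One small point you pass over: the finiteness of $\hat B$ needs a word --- since $dcl(B)\subseteq acl(B)=B$, any $f\in F$ incident with two distinct elements of $B$ would lie in $dcl(B)\subseteq B$, so each $f_k$ meets at most one element of $B$ and hence $|\hat B|\le m$. With that noted, your argument is complete.
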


	\begin{proof} Let $A$ and $B$ be as in the assumption of the lemma, $\kappa = (|A| + \omega)^+$, and let $D$ be such that $A \preccurlyeq D$ and $D$ is $\kappa$-saturated. Choose $A_i \preccurlyeq D$ such that:
	\begin{enumerate}[(i)]
	\item $A_0 = A$;
	\item for all $i < j < \kappa$, $A_i \cap A_j = B$;
	\item for all $i < \kappa$, there is an isomorphism $f_i: A \rightarrow A_i$ such that $f \restriction B = id_B$.
	\end{enumerate}
	By Proposition \ref{countable_open_HF_con} we have that $D$ admits an $HF$-ordering $<$ over $\emptyset$. Let $C \subseteq D$ such that $cl_<(C) = C$, $B \subseteq C$ and $|C| < \kappa$. Then there is $i < \kappa$ such that $C \cap A_i = B$. Then, since $C \leq_{HF} D$ (by Proposition \ref{lemma_initial_seg}) and $A_i \subseteq D$, by Remark~\ref{remark_induced_HF} we have that that $A_i \cap C = B \leq_{HF} A_i$, and so:
	$$ f_i^{-1}(B) = B \leq_{HF} A = f^{-1}(A_i),$$
since $f_i$ is an isomorphism and $f \restriction B = id_B$.
\end{proof}

	\begin{corollary}\label{cor_elem_sbs} If $A, B \models T$ and $A \preccurlyeq B$, then $B$ is HF-constructible over $A$.
\end{corollary}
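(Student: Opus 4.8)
The plan is to derive the corollary immediately from Lemma~\ref{lemma_acl}, combined with the elementary fact that an elementary substructure is algebraically closed inside the ambient structure. Concretely, I would argue in two short steps.

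\emph{Step 1: $A \preccurlyeq B$ forces $acl_B(A) = A$.} The inclusion $A \subseteq acl_B(A)$ is automatic, so suppose $c \in acl_B(A)$ and fix an $L$-formula $\varphi(x, \bar y)$ and a tuple $\bar a$ from $A$ with $B \models \varphi(c, \bar a)$ and $|\varphi(B, \bar a)| = k < \omega$ for some $k$. Writing $\varphi(B, \bar a) = \{c_1, \dots, c_k\}$, the sentence $\exists x_1 \cdots x_k\,\bigl(\bigwedge_{i<j} x_i \ne x_j \wedge \bigwedge_i \varphi(x_i, \bar a) \wedge \forall y (\varphi(y, \bar a) \to \bigvee_i y = x_i)\bigr)$ holds in $B$, hence in $A$ since $A \preccurlyeq B$; applying elementarity once more to the witnesses $c_1', \dots, c_k' \in A$ shows $\{c_1, \dots, c_k\} = \{c_1', \dots, c_k'\} \subseteq A$, whence $c \in A$.

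\emph{Step 2: invoke Lemma~\ref{lemma_acl} with the two models interchanged.} In the notation of Lemma~\ref{lemma_acl}, take its ``$A$'' to be our $B$ and its ``$B$'' to be our $A$. The hypotheses hold: $B \models T$, $A \subseteq B$, and $acl_B(A) = A$ by Step~1. The conclusion of the lemma is that $B$ is HF-constructible over $A$, i.e.\ $A \leq_{HF} B$, which is precisely the assertion of the corollary.

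I do not expect a genuine obstacle here: all the substantive work is already packaged inside Lemma~\ref{lemma_acl}, whose proof builds the HF-ordering using $\kappa$-saturation and a family of pairwise-$B$-disjoint isomorphic copies of $A$. The only points requiring care are the routine verification in Step~1 — in particular that elementarity pins down the \emph{whole} solution set of $\varphi(\cdot,\bar a)$, not merely $k$-many solutions — and keeping straight which model plays the role of the ``big'' structure when applying Lemma~\ref{lemma_acl}. If one wished to avoid that lemma, one would start from an HF-ordering $<$ of $B$ over $\emptyset$ (Proposition~\ref{countable_open_HF_con}) and try to reorganize it so that $A$ becomes $cl_<$-closed and then an initial segment (Proposition~\ref{lemma_initial_seg}); but an arbitrary elementary submodel $A$ need not be $cl_<$-closed, and repairing this is exactly what the saturation argument behind Lemma~\ref{lemma_acl} accomplishes, so routing through that lemma is the efficient path.
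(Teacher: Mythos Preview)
Your proof is correct and follows exactly the paper's approach: the paper's proof is the one-liner ``If $A \preccurlyeq B$, then $acl_B(A) = A$, and so the claim follows from Lemma~\ref{lemma_acl},'' which is precisely your Steps~1 and~2. Your Step~1 merely spells out the standard fact the paper leaves implicit.
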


	\begin{proof} If $A \preccurlyeq B$, then $acl_B(A) = A$, and so the claim follows from Lemma \ref{lemma_acl}.
\end{proof}

	In reading the following proof recall notation $A \leq^*_{HF} B$ from Definition~\ref{HF_construct_def}.

	\begin{proof}[Proof of Theorem \ref{th_elem_substr}] The main claim of the theorem follows from Lemma \ref{first_lemma_elem_sub} and Corollary \ref{cor_elem_sbs}. Concerning the ``furthermore part'', let $A \preccurlyeq B$ be free projective planes and suppose that $A$ is finitely generated. Notice that we can assume that also $B$ is finitely generated, since we can find a finitely generated $B' \preccurlyeq B$ such that $A \preccurlyeq B$. Let now $B^* \subseteq B$ and $A^* \subseteq A$ be finite generating configurations of the same rank as $A$ and $B$, respectively (cf. Fact~\ref{sieben_fact}). Let $B^* = B_0$ and, by induction on $n < \omega$, let $B_n$ be as in Definition~\ref{free_extension}, so that $B = \bigcup_{n < \omega} B_n$. Let $<$ be an HF-ordering of $B$ over $\emptyset$ such that for every $i < j < \omega$ we have that for every $b \in B_i$ and $b' \in B_j$ we have that $b < b'$. Let now $k < \omega$ be such that $A_* \subseteq B_k$. Now, $B_k \leq^*_{HF} B$ and $cl_<(A - B_k) = A - B_k$, and so using Proposition~\ref{lemma_initial_seg} it is easy to see that $A \cup B_k \leq^*_{HF} B$. Thus, it remains to show that $A \leq^*_{HF} A \cup (B_k - A)$, but this is clear since $B_k$ is finite and $A \leq_{HF} B$ (recall Remark~\ref{remark_induced_HF}).
\end{proof}	

\section{Prime Models}\label{sec_prime_models}

	\begin{fact}[{\cite[Theorem~3]{sandler_coll}}]\label{sandler_fact} Let $4 \leq n < \omega$ and $\pi^n = F(\pi^n_0)$ (recall Definition~\ref{pi_n}). Let $C$ be a subconfiguration of $\pi^n$ isomorphic to $\pi^n_0$ which generates $\pi^n$ (cf. Definition~\ref{subconf}(2)). Then there exists $\alpha \in Aut(\pi^n)$ such that $\alpha(\pi^n_0) = C$. In fact, enumerating $\pi^n_0$ as $(a_1, ..., a_{n+1})$ in such a way that $a_{n+1}$ is a line, $a_1, ..., a_{n-2}$ are points incident with $a_{n+1}$, and $a_{n-1}, a_n$ are points off of $a_{n+1}$, and enumerating $C$ in an analogous manner as $(b_1, ..., b_{n+1})$, $\alpha \in Aut(\pi^n)$ \mbox{can be taken so that $\alpha(a_i) = b_i$.}
\end{fact}

	\begin{lemma}\label{crucial_lemma_prime} Let $4 \leq n < \omega$ and $f: \pi^n \rightarrow \pi^n$ be a non-surjective embedding, then $f(\pi^n)$ is not an elementary substructure of $\pi^n$.
\end{lemma}

	\begin{proof} Let $A := \pi^n$ and $A' :=  f(\pi^n)$. Let $a_1, ..., a_{n+1}$ be a subconfiguration of $A$ isomorphic to $\pi^n_0$, and such that it generates $A$ (in the sense of Definition~\ref{subplane}), and such that e.g. $a_{n+1}$ is a line, $a_1, ..., a_{n-2}$ are points incident with $a_{n+1}$, and $a_{n-1}, a_n$ are points off of $a_{n+1}$. Let also $a'_i := f(a_i)$, for $1 \leq i \leq n+1$. For the sake of contradiction suppose that $A'$ is elementary in $A$. Now, since $A' \subseteq A$ and $\{ a_i : 1 \leq i \leq n+1 \}$ generates $A$, for every $1 \leq i \leq n+1$ we can find a term $t_i$ in the language $L' = \{ \wedge, \vee \}$ such that $A \models a'_i = t_i(a_1, ..., a_{n+1})$. Let $\psi(x_1, ..., x_{n+1})$ be the formula in the language of projective planes expressing that $x_{n+1}$ is a line, $x_1, ..., x_{n-2}$ are points incident with $x_{n+1}$, and $x_{n-1}, x_n$ are points off of $x_{n+1}$ (so that that $x_1, ..., x_{n+1}$ is isomorphic to $\pi^n_0$). Then we have:
$$A \models \exists x_1, ..., x_{n+1}(\psi(x_1, ...,x_{n+1}) \wedge \bigwedge^{n+1}_{i = 1} a'_i = t_i(x_1, ..., x_{n+1})).$$
Thus, since $A'$ is elementary in $A$, we have:
$$A' \models \exists x_1, ..., x_{n+1}(\psi(x_1, ...,x_{n+1}) \wedge \bigwedge^{n+1}_{i = 1} a'_i = t_i(x_1, ..., x_{n+1})).$$
But then, via the isomorphism $\alpha: A' \cong A$ such that $a'_i \mapsto a_i$, we have:
\begin{equation}\label{equstar} A \models \exists x_1, ..., x_{n+1}(\psi(x_1, ...,x_{n+1}) \wedge \bigwedge^{n+1}_{i = 1} a_i = t_i(x_1, ..., x_{n+1})). \tag{$\star$}
\end{equation}
Let $b_1, ..., b_{n+1} \in A$ be a witness of (\ref{equstar}). Then, clearly, $\{ b_1, ..., b_{n+1} \}$ is a subconfiguration of $A$ isomorphic to $\pi^n_0$. On the other hand, by the second conjunct of the formula in (\ref{equstar}), we have that:
$$\langle b_i : 1 \leq i \leq n+1 \rangle_A = A,$$
since $\{ a_i : 1 \leq i \leq n+1 \} \subseteq \langle b_i : 1 \leq i \leq n+1 \rangle_A$ and $\{ a_i : 1 \leq i \leq n+1 \}$ generates $A$. 
Hence, by Fact~\ref{sandler_fact} and the explicit definition of $\psi(x_1, ..., x_{n+1})$, we have that:
$$\beta: a_i \mapsto b_i \in Aut(A).$$
Furthermore:
$$\{ t_i(b_1, ..., b_{n+1}) : 1 \leq i \leq n+1 \} = \{a_1, ..., a_{n+1} \}$$
is a subconfiguration of $A$ isomorphic to $\pi^n_0$ which generates $A$, and so:
$$\gamma: b_i \mapsto t_i(b_1, ..., b_{n+1}) \in Aut(A).$$
Hence, we have:
$$\begin{array}{rcl}
(\beta^{-1} \circ \gamma \circ \beta)(a_i) & = & (\beta^{-1} \circ \gamma)(b_i)\\
 & = & \beta^{-1}(t_i(b_1, ..., b_{n+1}))\\
 & = & t_i(\beta^{-1}(b_1), ..., \beta^{-1}(b_{n+1}))) \\
 & = & t_i(a_1, ..., a_{n+1}) \\
 & = & a'_i. \\
\end{array}$$
So the map $\alpha^{-1}: a \mapsto a'_i = t_i(a_1, ..., a_{n+1})$ is in $Aut(A)$, a contradiction.
\end{proof}


	\begin{corollary}\label{embedding_free_planes} If $4 \leq m, n \leq \omega$ and $\pi^m$ embeds \mbox{elementarily in $\pi^n$, then $m \leq n$.}
\end{corollary}

	\begin{proof} Let $4 \leq m, n \leq \omega$ and suppose that $\pi^m$ embeds elementarily in $\pi^n$, say via the map $f$, and that $m > n$ (and so necessarily $n < \omega$). Then, by Lemma~\ref{first_lemma_elem_sub} we have that  $\pi^{n} \preccurlyeq \pi^m$ and $\pi^n \neq \pi^m$, and so $f \restriction \pi^n$ is a non-surjective elementary embedding of $\pi^n$ into itself, contradicting Lemma~\ref{crucial_lemma_prime}.
\end{proof}

	\begin{proof}[Proof of Theorem~\ref{th_prime_model}] 
If $M$ is prime, then it embeds elementarily in $\pi^4$ and so by Fact~\ref{facts_free_planes} it is isomorphic to $\pi^m$ for some $4 \leq m \leq \omega$. Hence, by Corollary~\ref{embedding_free_planes}, it suffices to show that $\pi^4$ is not prime. For the sake of contradiction, suppose that $\pi^4$ is prime. Choose copies $(A_i : i < \omega)$ of $\pi^4$ such that $A_i \subsetneq A_{i+1}$ and let $A = \bigcup_{i < \omega} A_i$. Then $A \models T$ (notice that the theory of open projective planes is a $\forall\exists$-theory). Let $f$ be an elementary embedding of $\pi^4$ into $A$, and let $B := f(\pi^4)$. Let then $i < \omega$ be such that $B \subseteq A_i$ (recall that $B$ is generated by four elements). Then, using Remark~\ref{remark_induced_HF} and Lemmas~\ref{first_lemma_elem_sub} and \ref{lemma_acl}, we have the following implications:
$$\begin{array}{rcl}
B \preccurlyeq A & \Rightarrow & B \leq_{HF} A \\
 				 & \Rightarrow & B \leq_{HF} A_{i + 1} \\
 				 & \Rightarrow & B \preccurlyeq A_{i+1}.
\end{array}$$
contradicting Lemma~\ref{crucial_lemma_prime} (letting the non-surjective embedding be $f: \pi^4 \rightarrow A_{i+1}$).
\end{proof}

	\begin{remark} Notice that already in \cite[Theorem~4.1]{kelly} it was proved that the projective $A = \bigcup_{i < \omega} A_i$ from the proof of Theorem~\ref{th_prime_model} is not a free projective plane. 
\end{remark}

\section{Algebraic and Definable Closure}\label{sec_alg_cl}


	\begin{proposition}\label{acl_neq_dcl} Let $M \models T$ be such that $\pi^4 \preccurlyeq M$. Then $acl_M \neq dcl_M$.
\end{proposition}

	\begin{proof} Suppose that $\pi^4 \preccurlyeq M$ and let $A = \pi^4$. Since $A \preccurlyeq M$, we have that $dcl_A = dcl_M$ and $acl_A = acl_M$. Hence, it suffices to show that there exists $X \subseteq A$ such that $dcl_A(X) \neq acl_A(X)$. Let $\{ a, b, c, d \}$ be a generating quadrangle of $A$ and let:
	$$e = (a \vee c) \wedge (b \vee d), \; f = (a \vee b) \wedge (c \vee d), \;  g = (a \vee d) \wedge (b \vee c).$$
	Now, the map $a \mapsto b$, $b \mapsto a$, $c \mapsto d$ and $d \mapsto c$ extends to an $\alpha \in Aut(A)$ and $\alpha$ fixes $e, f$ and $g$ and so $a, b, c, d \notin dcl_A(\{ e, f, g \})$. On the other hand, by Lemma~\ref{lemma_acl} we have that $acl_A(e, f, g) \leq_{HF} A$ and so $\{ a, b, c, d \} \in acl_A(e, f, g)$, since one cannot HF-construct the following finite configuration from any set containing $e, f$ and $g$ (given that every line contains $3$ points and every point is incident with $3$ lines):
	\begin{enumerate}[(i)]
\item points: $a, b, c, d, e, f, g$;
\item lines: $abg, cdg, adf, bcf, ace, bde$.
\end{enumerate}
\end{proof}

%

	We now generalize the definition of open from Definition~\ref{open} to open over $B$.

	\begin{definition} Let $B \subseteq A$ be finite partial planes. We say that $A$ is closed over $B$ if every element of $A - B$ is incident with at least three elements of $A$. Furthermore, given partial planes $B \subseteq A$, we say that $A$ is open over $B$ if there is no finite subconfiguration $A_0 \subseteq A$ such that $A_0$ is closed over $A_0 \cap B$.
\end{definition}

	\begin{proposition}\label{char_open_over} Let $A, B \models T$ with $B \subseteq A$. Then $A$ is open over $B$ iff $B \leq_{HF} A$. 
\end{proposition}

	\begin{proof} The obvious adaptation of the proof of Theorem~\ref{countable_open_HF_con} establishes the claim.
\end{proof}

\begin{definition}\label{free_relational_amalgam} Given structures $A, B, C$ in the same relational language with $A \subseteq B, C$ and $B \cap C = A$, we denote by $D : = B \otimes_A C$ the free amalgam of $B$ and $C$ over $A$, i.e. the domain of $D$ is $B \cup C$ and $D$ has as relations only the relations from $B$ and the relations $C$ (which is well-defined since $B \cap C = A$).
\end{definition}

\begin{definition}\label{def_canonical_amalgam} Let $A, B, C$ be open partial planes such that $B \cap C = A$, $A$ is a closed subplane of $B$ and $C$ (cf. Definition \ref{subplane}), and $A \leq_{HF} B, C$. Then we say that $D$ is the {\em canonical amalgam} of  $B$ and $C$ over $A$, denoted as $B \oplus_A C$, if $D \cong F(B \otimes_A C)$ (cf. Definitions \ref{free_extension} and \ref{free_relational_amalgam}), i.e. $D$ is the free extension (in the sense of Definition~\ref{free_extension}) of the free amalgam $B \otimes_A C$ (cf. Definition~\ref{free_relational_amalgam}).
\end{definition}

	\begin{proposition}\label{remark_can_amalgam} Referring to the context of Definition~\ref{def_canonical_amalgam}, notice that:
\begin{enumerate}[(1)]
	\item by Proposition~\ref{countable_open_HF_con} and the assumption $A \leq_{HF} B, C$ we have that: 
	$$\emptyset \leq_{HF} A \leq_{HF} B, C.$$
	\item by Proposition~\ref{char_open_over} we immediately have that $B, C \leq_{HF} D$.
	\item if $B \otimes_A C$ contains a quadrangle, then $B \oplus_A C = F(B \otimes_A C) \models T$.
	\item if $B, C \models T$ then by Lemma~\ref{first_lemma_elem_sub}  we have that $B, C \preccurlyeq B \oplus_A C$.
\end{enumerate}
\end{proposition}

	\begin{lemma}\label{lemma_partial_elementary_maps} Let $A, B \models T$, and let $A_0 \subseteq A$ and $B_0 \subseteq B$ such that $A_0, B_0 \models T$ and $A_0 \leq_{HF} A$ and $B_0 \leq_{HF} B$. Then if $f: A_0 \cong B_0$ then $f$ is partial elementary map between $A$ and $B$. In particular if $A = B = \mathfrak{M}$, where $\mathfrak{M}$ is the monster model of $T$, then there is $g \in Aut(\mathfrak{M})$ which extends $f$ (since $\mathfrak{M}$ is strongly $\kappa$-homogeneous). 
\end{lemma}

	\begin{proof} This is immediate by Proposition~\ref{remark_can_amalgam}(4).
\end{proof}

	\begin{definition}\label{pre_closure_for_acl} Let $A \models T$ and $B \subseteq A$. We define $cl^A_*(B)$ as the set of $a \in A$ such that there exists finite $A_0 \subseteq A$ such that $a \in A_0$ and $A_0$ is closed over $A_0 \cap B$. 
\end{definition}

	\begin{remark}\label{remark_A0_cup} Let $A \models T$, $B \subseteq A$ and $A_0, A'_0 \subseteq A$ finite. Notice that if $A_0$ is closed over $A_0 \cap B$ and $A_0'$ is closed over $A'_0 \cap B$, then $A_0 \cup A_0'$ is closed over $B \cap (A_0 \cup A'_0)$. 
\end{remark}

	\begin{definition}\label{closure_for_acl} Let $A \models T$ and $B \subseteq A$. By induction, we define $cl^*_n(B)$ as follows:
	\begin{enumerate}[(i)]
	\item $cl^*_{0}(B) = B$;
	\item $cl^*_{2n+1}(B) = cl^*_{2n}(B) \cup cl^A_*(cl^*_{2n+1}(B))$;
	\item (for $n> 0$) $cl^*_{2n}(B) = \langle cl^*_{2n-1}(B) \rangle_A$.
\end{enumerate}
Finally, we let $cl_A^*(B) = \bigcup_{n < \omega} cl^*_n(B)$.
\end{definition}

	\begin{remark}\label{remark_for_cl} Notice that by Proposition~\ref{char_open_over} we immediately have that $A \models T$ and $B \subseteq A$ implies that $cl_A^*(B) \leq_{HF} A$.
\end{remark}

	\begin{lemma}\label{lemma_acl_sub_cl} Let $A \models T$ and $B \subseteq A$. Then $acl_A(B) \subseteq cl_A^*(B)$.
\end{lemma}

	\begin{proof} Let $cl_A^*(B) = B_*$ and suppose that $a \notin B_*$. By induction on $n < \omega$ define:
	\begin{enumerate}[(i)]
	\item $D_0 = A$;
	\item $D_{n+1} = D_n \oplus_{B_*} A'$;
	\end{enumerate}
where $A'$ is an isomorphic copy of $A$ over $B_*$ such that $A' \cap D_n = B_*$. Notice that $D_{n+1} = D_n \oplus_{B_*} A'$ is well-defined since $B_*$ is a closed subplane of $D_n$ and $A'$ (in the sense of Definition \ref{subplane}), as it is required in Definition~\ref{def_canonical_amalgam}. Notice now that:
	\begin{enumerate}[(a)]
	\item for every $n < \omega$ we have that $D_n$ contains at least $n$ copies of $a$ over $B_*$;
	\item if $f: A \cong_{B_*} A'$ is a witness for the assertion $A'$ is an isomorphic copy of $A$ over $B_*$ such that $A' \cap D_n = B_*$, then there is an automorphism $g$ of $D_n$ which extends $f$, and so in $D_n$ we have that $tp(a/B_{*}) = tp(f(a)/B_{*})$;
	\item by Remark~\ref{remark_for_cl}, we have that $B_* \leq_{HF} A', D_n$, and thus $A \preccurlyeq D_{n}$ (cf. Proposition~\ref{remark_can_amalgam}(2)), and so $acl_{A}(B_*) = acl_{D_{n}}(B_*)$. 
\end{enumerate}
Hence, $a \notin acl_A(B_{*}) \subseteq acl_A(B)$.
\end{proof}

	\begin{lemma}\label{pre_lemma_cl_sub_acl} Let $A \models T$ and $B \subseteq A$. For all finite $A_0 \subseteq cl^A_*(B)$ (Definition~\ref{pre_closure_for_acl}) there is a finite $C \subseteq cl^A_*(B)$ such that $A_0 \subseteq C$ and such that in $A$ there are only finitely many $C' \subseteq A$ such that $C'$ is isomorphic with $C$ over $C \cap B$.
\end{lemma}

	\begin{proof} From the definition we know that for every $a \in cl^A_*(B)$ there is finite $C^a_0 \subseteq cl^A_*(B)$ such that $a \in C^a_0$ and $C^a_0$ is closed over $C^a_0 \cap B$. Let $C = \bigcup _{a \in A_0} C^a_0$. We claim that $C$ is such that in $A$ there are only finitely many copies $C'$ of $C$ over $C \cap B$. First of all, notice that by Remark~\ref{remark_A0_cup} we have that $C$ is closed over $C \cap B$. Now, if the number of copies of $C$ over $C \cap B$ is not finite, then in the monster model $\mathfrak{M}$ of $T$ the number of copies of $C$ over $C \cap B$ is unbounded, and so we can find $A \subseteq N \preccurlyeq \mathfrak{M}$ and a copy $C'$ of $C$ over $C \cap B$ such that $C' \not\subseteq N$. But then, since $C$ is closed over $C \cap B$, $C'$ is also closed over $C' \cap B$, and so $C'$ is not HF-constructible from $N$. It follows that $N \not\leq_{HF} \mathfrak{M}$, contradicting  $N \preccurlyeq \mathfrak{M}$.
\end{proof}

	\begin{lemma}\label{lemma_cl_sub_acl} Let $A \models T$ and $B \subseteq A$. For all finite $A_0 \subseteq cl^*_A(B)$ (Definition~\ref{closure_for_acl}) there is a finite $C \subseteq A$ such that $A_0 \subseteq C$ and there are only finitely many $C' \subseteq A$ such that $C'$ is isomorphic with $C$ over $C \cap B$.
\end{lemma}

	\begin{proof} It suffices to show that, for every $n < \omega$, the same happens for $cl^*_n(B)$ (cf. Definition~\ref{closure_for_acl}), and this is an easy induction where the crucial step is Lemma~\ref{pre_lemma_cl_sub_acl}.
\end{proof}

	\begin{corollary}\label{cor_char_alg_cl} Let $A \models T$ and $B \subseteq A$, then $acl_A(B) = cl_A^*(B)$.
\end{corollary}

	\begin{proof} This is immediate by Lemmas~\ref{lemma_acl_sub_cl}~and~\ref{lemma_cl_sub_acl}.
\end{proof}

	We want to notice that ideas similar to the ones occurring in the proof of Theorem~\ref{elim_quantifiers} below appear also in the proofs of related results in \cite{boney, kaisa}.
	
	\begin{proof}[Proof of Theorem~\ref{elim_quantifiers}] Let $B$ be a copy of $\pi^4$, then it is easy to find a proper substructure $A$ of $B$ which is isomorphic to $B$, and so, by Lemma~\ref{crucial_lemma_prime}, $A \models T$ is not an elementary substructure of $B \models T$. Concerning the fact that in $T$ every formula is equivalent to a Boolean combinations of $\exists$-formulas, by Lemma~\ref{lemma_partial_elementary_maps}, it suffices to show that letting $\mathfrak{M}$ be the monster model of $T$ and letting $\bar{a}, \bar{b} \in \mathfrak{M}^{< \omega}$ be such that they realize exactly the same existential formulas, then there is $g: acl_{\mathfrak{M}}(\bar{a}) \cong acl_{\mathfrak{M}}(\bar{b})$ such that $g(\bar{a}) = \bar{b}$. First of all recall that \mbox{by Corollary~\ref{cor_char_alg_cl}:}
	\begin{equation}\label{cl_equation} \tag{$\star$} acl_{\mathfrak{M}}(\bar{a}) = cl_{\mathfrak{M}}^*(\bar{a}) \; \text{ and } \; acl_{\mathfrak{M}}(\bar{b}) = cl_{\mathfrak{M}}^*(\bar{b}).
\end{equation}
Now, in order to find the wanted $g$, let $X$ be the set of all $C \subseteq acl_{\mathfrak{M}}(\bar{a})$ such that $\bar{a} \subseteq C$ and such that in $A$ there are only finitely many copies of $C$ over $\bar{a}$. Notice that by Lemma~\ref{lemma_cl_sub_acl} for all $c \in acl_{\mathfrak{M}}(\bar{a})$ there exists $C \in X$ such that $c \in C$. Now, since $\bar{a}$ and $\bar{b}$ realize exactly the same existential formulas, for every $C \in X$ there is a partial isomorphism $f_C: C \rightarrow acl_{\mathfrak{M}}(\bar{b})$ such that $f_C(\bar{a}) = \bar{b}$ (notice that the range of $f_C$ is $acl_{\mathfrak{M}}(\bar{b})$ by the choice of $X$). Let $\mathfrak{U}$ be a an ultrafiler on $\mathcal{P}(X)$ such that for all $C \in X$ we have that $X_C = \{ D \in X : C \subseteq D \} \in \mathfrak{U}$ (this can be justified by the same argument used in the proof of Theorem~\ref{countable_open_HF_con}). Now, for every $C \in X$ there are only finitely many embeddings $h$ of $C$ into $acl_{\mathfrak{M}}(\bar{b})$ such that $h(\bar{a}) = \bar{b}$ (recall the equations in (\ref{cl_equation}) above), let then $g^1_C, ..., g^{n(C)}_C$ be an injective enumeration of such embeddings and for every $0 < i \leq n(C)$ let:
	$$Y_C^i = \{ D \in X : C \subseteq D \text{ and } f_D \restriction C = g^i_C \}.$$ 
Then clearly we have that $X_C = Y_C^1 \cup \cdots \cup Y_C^{n(C)}$ and for every $0 < i < j \leq n(C)$ we have that $Y^i_C \cap Y^j_C = \emptyset$. Thus, since by the choice of $\mathfrak{U}$ we have that $X_C \in \mathfrak{U}$ and $\mathfrak{U}$ is an ultrafilter, there is a unique embedding $g_C: C \rightarrow acl_{\mathfrak{M}}(\bar{b})$ such that $g_C(\bar{a}) = \bar{b}$ and which satisfies the following condition:
$$Y_C = \{ D \in X : C \subseteq D \text{ and } f_D \restriction C = g_C \} \in \mathfrak{U}.$$ 
	Notice now that for $C, D \in X$ such that $C \subseteq D$ we have that $g_D \restriction C = g_C$. In fact, since $Y_C, Y_D \in \mathfrak{U}$, we have that $Y_C \cap Y_D \neq \emptyset$. Let $E \in Y_C \cap Y_D$, then we have that $g_C = g_E \restriction C$ and $g_D = g_E \restriction D$, from which it follows $g_C = g_D \restriction C$. Thus, we can conclude that the following map $g$ is an embedding:
	$$g = \bigcup_{C \in X} g_C: acl_{\mathfrak{M}}(\bar{a}) \rightarrow acl_{\mathfrak{M}}(\bar{b}).$$
Finally, $g$ is onto $acl_{\mathfrak{M}}(\bar{b})$ since otherwise some isomorphism type of an element of $X$ is realized over $\bar{b}$ more times than over $\bar{a}$, contradicting the fact that $\bar{a}$ and $\bar{b}$ realize exactly the same existential formulas. Thus, $g$ is the wanted isomorphism.
\end{proof}

\section{Type-Homogeneity}\label{sec_homogeneo}

\subsection{Type-Homogeneity of $\pi^n$, for $n < \omega$}\label{sec_homogeneo_omega}

	\begin{fact}[{\cite[Proposition~1.6.6]{kelly}}]\label{fact_homogeo_finite_n} Let $\pi$ be a free projective plane and let $\pi'$ be a proper non-degenerate subplane of $\pi$ such that $\pi' \leq^*_{HF} \pi$ (cf. Definition~\ref{HF_construct_def}).
Then, for any line $\ell \in \pi'$, there is a well-founded HF-ordering of $\pi$ over $\pi'$ such that in the HF-ordering there are no one point extensions of type $0$ and each one point extension of type $1$ is obtained adding a point to the line $\ell$.
\end{fact}

	\begin{lemma}\label{homogeneity_n_finite} Let $n, m < \omega$, $A = \pi^n$, $\bar{a}, \bar{b} \in A^m$ and $P \subseteq A$. Suppose that $acl_A(P\bar{a})$ is non-degenerate and that $tp(\bar{a}/P) = tp(\bar{b}/P)$. Then there exists $f \in Aut(A)$ mapping $\bar{a}$ to $\bar{b}$ and fixing $P$ pointwise.
\end{lemma}

	\begin{proof} If $tp(\bar{a}/P) = tp(\bar{b}/P)$, then there exists $\alpha: acl_A(P\bar{a}) \cong acl_A(P\bar{b})$ such that $\alpha(\bar{a}) = \bar{b}$ and $\alpha \restriction P = P$. Let $H_{\bar{a}} := acl_A(X\bar{a})$ and $H_{\bar{b}} := acl_A(X\bar{b})$. Recall that by assumption $H_{\bar{a}} \models T$ (and so also $H_{\bar{b}} \models T$). Now, by Lemma~\ref{lemma_acl} and the ``further part'' of Theorem~\ref{th_elem_substr}, $H_{\bar{a}} \leq^*_{HF} A$ and $H_{\bar{b}} \leq^*_{HF} A$ (cf. Definition~\ref{HF_construct_def}). 
	
\smallskip
\noindent By Fact~\ref{fact_homogeo_finite_n} and Proposition~\ref{lemma_initial_seg} we can find a line $\ell$ of $H_{\bar{a}}$, a well-founded HF-order $<_{\bar{a}}$ of $A$ over $H_{\bar{a}}$, and a well-founded HF-order $<_{\bar{b}}$ of $A$ over $H_{\bar{b}}$ such that letting $rk(A) - rk(H_{\bar{a}}) := k$ (notice that $k = rk(A) - rk(H_{\bar{b}})$) we have:
	\begin{enumerate}[(i)]
	\item the first $k$ elements of $<_{\bar{a}}$ are $k$ points $p_1, ..., p_k$ incident only with the line $\ell$;
	\item the first $k$ elements of $<_{\bar{b}}$ are $k$ points $q_1, ..., q_k$ incident \mbox{only with the line $\alpha(\ell)$;}
	\item $\langle H_{\bar{a}} \cup \{ p_1, ..., p_k \} \rangle_A = A = \langle H_{\bar{b}} \cup \{ q_1, ..., q_k \} \rangle_A$.
\end{enumerate}
Hence, we can define $f \in Aut(A)$ extending $\alpha$ \mbox{letting, for $i = 1, ..., k$, $f(p_i) = q_i$}.
\end{proof}

	\begin{remark} We conjecture that the non-degeneracy assumption of Lemma~\ref{homogeneity_n_finite} is not necessary but the only proof we could come up with had a complicated case distinction and so we decided to not elaborate it and to not write it down, since such proof did not add anything substantial and general to the picture.
\end{remark}

\subsection{Type-Homogeneity of $\pi^\omega$}\label{sec_homogeneo_omega}

	\begin{proof}[Proof of Corollary \ref{corollary_free}] This is by Fact \ref{facts_free_planes}(\ref{item3}) and Theorems \ref{th_complete}, \ref{th_elem_substr} and \ref{th_strictly_stable}.
\end{proof}		

\begin{context}\label{context} We refer to the general framework of \cite{baldwin_generic}. By $(\mathbf{K}, \leq)$ we denote an hereditary class $\mathbf{K}$ of finite structures in a fixed relational language $L$ together with a substructure relation $\leq$ satisfying the \mbox{following axioms (cf. \cite[Axioms Group A]{baldwin_generic}):}
\begin{enumerate}[(1)]
	\item if $A \in \mathbf{K}$, then $A \leq A$;
	\item if $A \leq B$, then $A$ is a substructure of $B$;
	\item if $A, B, C \in \mathbf{K}_0$ and $A \leq B \leq C$, then $A \leq C$;
	\item $\emptyset \in \mathbf{K}$ and $\emptyset \leq A$, for all $A \in \mathbf{K}_0$;
	\item if $A, B, C \in \mathbf{K}$, $A \leq C$, and $B$ is a substructure of $C$, then $A \cap B \leq B$.
\end{enumerate}
\end{context}

	\begin{definition}\label{various_def} Let $(\mathbf{K}, \leq)$ be as in Context \ref{context}. 
	\begin{enumerate}[(1)]
	\item We say that $B$ is a primitive extension of $A$ if $A \leq B$ and there is no $A \subsetneq B_0 \subsetneq B$ such that $A \leq B_0 \leq B$.
	\item We write $A < B$ to mean that $A \leq B$ and $A \neq B$.
\end{enumerate}
\end{definition}

\begin{notation}\label{notation_strong} Let $(\mathbf{K}, \leq)$ be as in Context \ref{context}.
	\begin{enumerate}[(1)]
	\item We denote by $\hat{\mathbf{K}}$ the class of structures $M$ in the language $L$ such that every finite substructure of $M$ is in $\mathbf{K}$.
	\item We extend $\leq$ to a relation $\leq^*$ on $\mathbf{K} \times \hat{\mathbf{K}}$ by declaring $A \leq M$ if and only if $A \subseteq M$ and for every finite $B \in \mathbf{K}$ such that $A \subseteq B \subseteq M$ we have that $A \leq B$.
	\end{enumerate}	 
\end{notation}

	\begin{definition}\label{def_semigeneric} Given $(\mathbf{K}, \leq)$ as in Context \ref{context} we say that a model $M \in \hat{\mathbf{K}}$ is $(\mathbf{K}, \leq)$-rich if:
	\begin{enumerate}[(1)]
	\item whenever $A \leq^* M$ and $A \leq B \in \mathbf{K}$ there exists $B' \leq^* M$ such that $B' \cong_A B$.
\end{enumerate}
	We say that $M \in \mathbf{K}$ is $(\mathbf{K}, \leq)$-generic if in addition:
	\begin{enumerate}[(2)]
	\item $M$ is countable and $M$ is the union of a $\leq$-chain of $\mathbf{K}$-structures.
\end{enumerate}	
\end{definition}

	\begin{fact}[\cite{baldwin_generic}]\label{fact_generic} If $(\mathbf{K}, \leq)$ satisfies the conditions of Context \ref{context} and it is an amalgamation class, then there exists a $(\mathbf{K}, \leq)$-generic $G(\mathbf{K}, \leq) = G$. Furthermore, this structure is unique up to isomorphism (with respect to being $(\mathbf{K}, \leq)$-generic) and it satisfies the following form of homogeneity: if $A, B \in \mathbf{K}$, $A, B \leq^* G$ (cf. Notation~\ref{notation_strong}(2)) and $f: A \cong B$, then there is $\tilde{f} \in Aut(G)$ such that $f \subseteq \tilde{f}$.
\end{fact}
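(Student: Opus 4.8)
The plan is to run the standard Fraïssé--Hrushovski construction relative to the strong substructure relation $\leq$, establishing existence first, then isolating a closure operation, and finally deriving homogeneity and uniqueness by back-and-forth. \textbf{Existence.} I would build $G$ as the union of a $\leq$-chain $\emptyset = G_0 \leq G_1 \leq \cdots$ of members of $\mathbf{K}$ by a bookkeeping argument. Fix an enumeration of all ``tasks'' $(n, A, B)$ where $A$ is a substructure of $G_n$ with $A \leq G_n$ and $A \leq B \in \mathbf{K}$ (countably many, since each $G_n$ is finite, the language is relational, and only isomorphism types matter), dovetailed so that a task $(n, \cdot, \cdot)$ is presented only at stages $s > n$. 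At stage $s$, given the task $(n, A, B)$, note $A \leq G_n \leq G_s$ by Axiom (3) of Context~\ref{context}, so we may apply the amalgamation property assumed of $(\mathbf{K},\leq)$ to amalgamate $B$ and $G_s$ over $A$, obtaining $G_{s+1} \in \mathbf{K}$ with $G_s \leq G_{s+1}$ and a copy $B' \leq G_{s+1}$, $B' \cong_A B$. Set $G = \bigcup_s G_s$. Then $G$ is countable, lies in $\hat{\mathbf{K}}$ (any finite substructure sits inside some $G_s \in \mathbf{K}$ and $\mathbf{K}$ is hereditary), and is a union of a $\leq$-chain of $\mathbf{K}$-structures. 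For richness, if $A \leq^* G$ and $A \leq B \in \mathbf{K}$ then $A$ is finite, so $A \subseteq G_n$ and $A \leq G_n$ (by definition of $\leq^*$) for some $n$; the corresponding task is eventually handled, producing $B' \leq G_m$, and $B' \leq^* G$ follows from Axioms (3) and (5): any finite $X$ with $B' \subseteq X \subseteq G$ lies in some $G_k$ with $k \geq m$, and $B' \leq G_m \leq G_k$ gives $B' = B' \cap X \leq X$. Hence $G$ is $(\mathbf{K},\leq)$-generic.

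\textbf{A closure operation.} The technical lemma I would isolate is: for every finite $D \subseteq G$ there is a finite $cl(D)$ with $D \subseteq cl(D) \leq^* G$. Inside each $G_k \supseteq D$ of the chain, the family of substructures $E$ with $D \subseteq E \leq G_k$ is nonempty (it contains $G_k$) and closed under intersection --- if $D \subseteq E_1, E_2 \leq G_k$ then $E_1 \cap E_2 \leq E_2$ by Axiom (5), hence $E_1 \cap E_2 \leq G_k$ by Axiom (3) --- so it has a least element $cl_{G_k}(D)$. Since $G_k \leq G_{k+1}$, Axiom (3) gives $cl_{G_k}(D) \leq G_{k+1}$, whence $cl_{G_{k+1}}(D) \subseteq cl_{G_k}(D)$; this decreasing sequence of finite sets stabilises to some $cl(D)$, and Axiom (5) applied to $cl(D) \leq G_k$ and an arbitrary finite $X$ with $cl(D) \subseteq X \subseteq G$ shows $cl(D) \leq^* G$. (When $D \leq^* G$ already, $cl(D) = D$.) I expect this stabilisation argument, and making it interact correctly with $\leq^*$, to be the main obstacle; everything after it is formal back-and-forth.

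\textbf{Homogeneity.} Given finite $A, B \leq^* G$ and $f : A \cong B$, enumerate $G = \{g_i : i < \omega\}$ and build an increasing chain of finite partial isomorphisms $f = f_0 \subseteq f_1 \subseteq \cdots$ with $\mathrm{dom}(f_n) = A_n \leq^* G$ and $\mathrm{range}(f_n) = B_n \leq^* G$. At the forth step, to put $g_n$ into the domain, set $C := cl(A_n \cup \{g_n\})$; then $A_n \leq C \in \mathbf{K}$ since $A_n \leq^* G$ and $C$ is a finite substructure of $G$ containing $A_n$. Transport this extension along $f_n$ to get $C^* \in \mathbf{K}$ with $B_n \leq C^*$ and an isomorphism $C \cong C^*$ extending $f_n$; then apply richness to $B_n \leq^* G$ and $B_n \leq C^*$ to get $C^{**} \leq^* G$ with $C^{**} \cong_{B_n} C^*$, and let $f_{n+1} : C \to C^{**}$ be the composite. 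The back step is symmetric. Then $\tilde{f} = \bigcup_n f_n \in Aut(G)$ and $f \subseteq \tilde{f}$.

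\textbf{Uniqueness.} This is the same back-and-forth played between two $(\mathbf{K},\leq)$-generics $G, G'$, started from $f_0 = \emptyset : \emptyset \to \emptyset$ (legitimate since $\emptyset \leq^* G, G'$ by Axiom (4)), using the closure operation on whichever structure's new element we are absorbing and richness on the other; the resulting union is an isomorphism $G \cong G'$. This also yields the asserted canonical notation $G(\mathbf{K},\leq)$.
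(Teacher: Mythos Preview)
The paper does not prove this statement at all: it is recorded as a \emph{Fact} with a citation to Baldwin--Shi, and no argument is given. So there is no ``paper's own proof'' to compare against; your proposal supplies what the authors deliberately outsourced.

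Your argument is the standard Fra\"iss\'e--Hrushovski construction and is essentially correct. The closure lemma is the right technical core: the intersection-closure of $\{E : D \subseteq E \leq G_k\}$ via Axiom~(5), the monotone stabilisation of $cl_{G_k}(D)$ along the chain, and the verification that the stable value is $\leq^*$-strong in $G$ are all sound. Homogeneity and uniqueness then follow by the back-and-forth exactly as you outline. Two minor presentational points: first, your enumeration of tasks $(n,A,B)$ refers to $G_n$ before it is built, so strictly speaking you should phrase the bookkeeping as a growing queue (at stage $n$, list the finitely many new tasks with first coordinate $n$ and interleave them into the schedule) rather than a single enumeration fixed in advance; second, the countability of the task list tacitly uses that the language is countable (or at least that $\mathbf{K}$ has countably many isomorphism types), which is not stated in Context~\ref{context} but holds in the paper's application. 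Neither point is a genuine gap.
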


	\begin{convention}\label{def_conc_K} From now till the end of this section $(\mathbf{K}, \leq)$ will be as below.
	\begin{enumerate}[(1)]
	\item Let $\mathbf{K}$ be the class of finite open partial planes (cf. Definition~\ref{open}) in a language $L = \{ S_1, S_2, I \}$ specifying the set of points, the set of lines and the point-line incidence relation (i.e. $L$ is as in Notation \ref{notation_theory}).
	\item For $A, B \in \mathbf{K}$, we let $A \leq B$ if $A \leq_{HF} B$, i.e. $B$ is HF-constructible from $A$  (cf. Definition~\ref{HF_construct_def}). We will stick to the notation $\leq_{HF}$ (instead of simply $\leq$) for uniformity of notation with the rest of the paper.
\end{enumerate}		
\end{convention}

\begin{lemma} (Recall Convention \ref{def_conc_K}) $(\mathbf{K}, \leq_{HF})$ satisfies Context~\ref{context}.
\end{lemma}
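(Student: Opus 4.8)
The plan is to verify the five axioms of Context~\ref{context} one at a time, treating $\leq_{HF}$ as the relation on $\mathbf{K}$ given by HF-constructibility (Definition~\ref{HF_construct_def}, equivalently the restriction of Definition~\ref{def_nwf_cons} to finite structures via Remark~\ref{remark_well_founded}). Axioms~(1), (2) and~(4) are essentially immediate from the definitions: $A \leq_{HF} A$ via the empty construction sequence; $A \leq_{HF} B$ forces $A$ to be a substructure of $B$ by the very form of Definition~\ref{HF_construct_def}; and $\emptyset \leq_{HF} A$ for every finite open partial plane $A$ is precisely Fact~\ref{open_implies_constr} (Siebenmann's Lemma~1), while $\emptyset \in \mathbf{K}$ since the empty partial plane is vacuously open. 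So the content is in axioms~(3) (transitivity) and~(5) (the ``intersection'' or amalgamation-base axiom).

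For transitivity, given $A \leq_{HF} B \leq_{HF} C$ with all three in $\mathbf{K}$, I would simply concatenate an HF-ordering of $B - A$ over $A$ with an HF-ordering of $C - B$ over $B$ to get a linear ordering of $C - A$; the point is that for each $c \in C - B$ the two allowed ``predecessors'' lie in $B$ or are $<$-smaller elements of $C - B$, and $B = A \cup (B-A)$, so after concatenation those predecessors are still either in $A$ or $<$-earlier in $C - A$, and similarly for $b \in B - A$ the predecessors lie in $A$ or earlier in $B - A \subseteq C - A$. Hence $A \leq_{HF} C$. (One should note the elements of $B - A$ come before those of $C - B$ in the concatenated order, which is exactly what makes the predecessor condition survive.)

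For axiom~(5), suppose $A, B, C \in \mathbf{K}$ with $A \leq_{HF} C$ and $B$ a substructure of $C$; I must show $A \cap B \leq_{HF} B$. This is the analogue, in the finite setting, of Remark~\ref{remark_induced_HF}, which states exactly that $A \leq_{HF} C$ and $B \subseteq C$ imply $B \cap A \leq_{HF} B$ — so the cleanest route is to invoke Remark~\ref{remark_induced_HF} directly, after observing that $B \cap A$ and $B$ are again finite open partial planes (substructures of the open $C$, hence open, since openness is inherited by subconfigurations by Definition~\ref{open}). If one wants a self-contained argument: take an HF-ordering $<$ of $C$ over $A$, restrict it to $B - (A\cap B) = B - A$, and check that each $b$ in this set has at most two predecessors among $A\cap B \cup \{b' \in B - A : b' < b\}$ — which follows because its at-most-two predecessors in the $C$-over-$A$ ordering lie in $A$ or are $<$-earlier in $C - A$, and being also elements of $B$ they land in $A \cap B$ or in $B - A$ earlier than $b$.

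The main obstacle, such as it is, is bookkeeping rather than mathematics: making sure in axioms~(3) and~(5) that ``predecessor in $A$'' versus ``$<$-earlier predecessor'' is tracked correctly through restriction and concatenation of orderings, and that one is using the non-well-founded Definition~\ref{def_nwf_cons} formulation (or, since everything here is finite, the equivalent well-founded Definition~\ref{HF_construct_def} — cf. Remark~\ref{remark_well_founded}) consistently. I expect no genuine difficulty: heredity of $\mathbf{K}$ (openness passes to subconfigurations) plus Fact~\ref{open_implies_constr} and Remark~\ref{remark_induced_HF} do all the real work, and the proof should be a short paragraph citing these.
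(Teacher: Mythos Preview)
Your proposal is correct and essentially matches the paper's proof: the paper likewise regards (1)--(3) as trivial and dispatches (4) and (5) by citing Siebenmann's Lemma~1 (your Fact~\ref{open_implies_constr}) and Kelly's \cite[Lemma~1.5.7]{kelly} (the content of your Remark~\ref{remark_induced_HF}) respectively. The only cosmetic difference is that you single out (3) and (5) as the substantive items while the paper singles out (4) and (5), but your handling of all five axioms is sound and relies on exactly the same ingredients.
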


	\begin{proof} The only non-trivial fact is the satisfaction of conditions (4) and (5) of Context \ref{context}. The satisfaction of these conditions was already observed in the sections above (and used repeatedly), but for completeness we give full references: condition (4) is \cite[Lemma 1]{sieben} and condition (5) is \cite[Lemma 1.5.7]{kelly}.
\end{proof}

\begin{observation}\label{char_primitive} Let $B <_{HF} C \in \mathbf{K}$ be a primitive extension (cf. Definition \ref{various_def}). Then there are three cases:
	\begin{enumerate}[(1)]
	\item $C = B \cup \{ c \}$ and $c$ is incident with exactly two elements of $B$;
	\item $C = B \cup \{ c \}$ and $c$ is incident with exactly one element of $B$;
	\item $C = B \cup \{ c \}$ and $c$ is not incident with any element of $B$.
	\end{enumerate}
\end{observation}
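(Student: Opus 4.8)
The plan is to unwind the definition of $\leq_{HF}$ on finite structures and use primitivity to force the HF-construction to consist of a single step.

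First, since $B <_{HF} C$ with $B, C \in \mathbf{K}$ finite and $B \neq C$, Convention~\ref{def_conc_K} and Definition~\ref{HF_construct_def} (which for finite partial planes agrees with the finite case of Definition~\ref{def_nwf_cons}) give a finite strictly increasing sequence $B = C_0 \subsetneq C_1 \subsetneq \cdots \subsetneq C_n = C$ of partial planes, with $n \geq 1$, such that each $C_{k+1}$ is a hyper-free one-point extension of $C_k$. Write $C_1 = B \cup \{ c \}$ with $c \notin B$; by Definition~\ref{HF_def}, $c$ is incident in $C_1$, hence in $C$, with at most two elements of $B$.

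Next I claim $C_1 \in \mathbf{K}$, i.e.\ $C_1$ is a finite open partial plane. It is clearly a finite partial plane, and, being an initial segment of a construction of $C$, it is a subconfiguration of $C$ in the sense of Definition~\ref{subconf} (the incidences added at the later steps $C_1 \to \cdots \to C_n$ each involve a new point, so the incidence relation of $C_1$ is exactly the restriction to $C_1$ of that of $C$). Since every finite subconfiguration of $C_1$ is a finite subconfiguration of $C$ and $C$ is open, $C_1$ is open by Definition~\ref{open}. Moreover $B \leq_{HF} C_1$, as witnessed by the one-step sequence $B, C_1$, and $C_1 \leq_{HF} C$, as witnessed by the tail $(C_k)_{1 \leq k \leq n}$.

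Finally, apply primitivity. We have $B \subsetneq C_1$, $C_1 \in \mathbf{K}$, and $B \leq_{HF} C_1 \leq_{HF} C$; were $C_1 \subsetneq C$, this would contradict $B <_{HF} C$ being a primitive extension (Definition~\ref{various_def}(1)). Hence $C_1 = C$, so $C = B \cup \{ c \}$ with $c$ incident with at most two elements of $B$, and distinguishing whether $c$ is incident with exactly two, exactly one, or no elements of $B$ yields cases (1), (2), (3) respectively. There is essentially no real obstacle here; the only points requiring (routine) care are that the well-founded and the general notions of HF-construction coincide for finite planes, and that an initial segment of an HF-construction of an open plane is again an open partial plane, both immediate from the definitions.
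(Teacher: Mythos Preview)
Your proof is correct. The paper states this as an Observation without proof, treating it as immediate from the definitions; your argument is exactly the natural unwinding one would supply to justify it, so there is nothing substantive to compare.
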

	
	We invite the reader to recall Definition~\ref{free_relational_amalgam}.

	\begin{lemma}\label{lemma_sharp} $(\mathbf{K}, \leq_{HF})$ is an amalgamation class.
\end{lemma}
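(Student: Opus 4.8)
The plan is to exhibit the amalgam as a pushout in the category of partial planes. Given $A,B,C\in\mathbf{K}$ with $A\leq_{HF}B$ and $A\leq_{HF}C$, we may assume $B\cap C=A$, and we build $D$ together with embeddings $f\colon B\to D$ and $g\colon C\to D$ fixing $A$ pointwise, as follows. Fix an HF-ordering $<_C$ of $C$ over $A$ (cf.\ Definition~\ref{def_nwf_cons}) and enumerate $C-A$ accordingly as $(c_k)_{k<n}$. Put $D_0=B$, $f=\mathrm{id}_B$, and $g{\restriction}A=\mathrm{id}_A$. Recursively, having built $D_k\supseteq B$ and $g$ on $A\cup\{c_0,\dots,c_{k-1}\}$: the element $c_k$ is incident in $C$ with at most two elements $e_1$ (and possibly $e_2$) among $A\cup\{c_0,\dots,c_{k-1}\}$; if it is incident with two of them --- so, up to the duality of Observation~\ref{obs_duality}, $c_k$ is a point and $c_k=e_1\wedge e_2$ in $C$ --- and if the lines $g(e_1),g(e_2)$ already meet at some point $w$ of $D_k$, set $g(c_k):=w$ and $D_{k+1}:=D_k$; in every other case set $D_{k+1}:=D_k\cup\{g(c_k)\}$ with $g(c_k)$ a fresh element incident, in $D_{k+1}$, with exactly $g(e_1)$ (and $g(e_2)$, if present). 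Finally let $D:=D_n$ and let $N:=D-B$ be the set of fresh elements. Categorically, $D$ is the pushout of $B\leftarrow A\rightarrow C$ among partial planes, i.e.\ $B\otimes_A C$ quotiented by the least identifications turning it into a partial plane.

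Now I would check, in order: (1) $D$ is a finite partial plane --- finiteness is clear, and every step preserves axioms (A)--(B) of Definition~\ref{def_plane}, since a fresh element gets at most two incidences and a new meet/join is introduced only when none was present. (2) $f$ and $g$ are embeddings fixing $A$: for $f$ this is trivial, and for $g$ one argues by induction along $<_C$ that $g{\restriction}(A\cup\{c_0,\dots,c_{k-1}\})$ is an embedding; the one nonroutine point is that when $g(c_k)$ is set equal to an existing element $w$, this $w$ is neither in $A$ nor equal to an earlier $g(c_j)$: indeed $w$ is then forced to equal $g(e_1)\wedge g(e_2)$, so if $w=g(x)$ with $x$ already handled, then --- $g$ being an embedding so far --- $x$ would be incident in $C$ with both $e_1,e_2$, whence $x=c_k$ by axiom (B), a contradiction. (3) $D$ is open: an HF-ordering of $A$ over $\emptyset$ exists by Fact~\ref{open_implies_constr}; concatenating it with an HF-ordering of $B$ over $A$ and then with the list of fresh elements in $<_C$-order gives an HF-ordering of $D$ over $\emptyset$; and a finite partial plane admitting such an ordering is open, since the $<$-maximal element of any finite subconfiguration has at most two predecessors, hence at most two incident elements inside that subconfiguration.

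It remains to see $f(B)\leq_{HF}D$ and $g(C)\leq_{HF}D$. The first is immediate: the fresh elements listed in $<_C$-order form an HF-ordering of $D$ over $B$, since each $g(c_k)\in N$ was introduced incident with at most the two elements $g(e_1),g(e_2)$, which lie in $B$ together with the earlier fresh elements. For the second, run the symmetric construction with the roles of $B$ and $C$ exchanged: it produces a partial plane $D'$ with embeddings $f'\colon B\to D'$, $g'\colon C\to D'$ for which $g'(C)=C\leq_{HF}D'$ by the same reasoning. Both $(D,f,g)$ and $(D',f',g')$ satisfy the universal property of the pushout of $B\leftarrow A\rightarrow C$ (any partial plane receiving embeddings of $B$ and $C$ agreeing on $A$ receives a unique compatible map, determined on $B$ by the given embedding and on a fresh element $g(c)$ by the image of $c$ --- this is well defined exactly because joins and meets are unique in a partial plane). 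Hence $D\cong D'$ over $A$, and transporting across this isomorphism the HF-ordering witnessing $g'(C)\leq_{HF}D'$ yields $g(C)\leq_{HF}D$.

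The hard part is the claim, implicit in steps (2) and (3), that the forced identifications are harmless: that they never collapse two elements of $A$ (nor of $C$) and that the two constructions coincide. This is precisely where the hypotheses $A\leq_{HF}B$ and $A\leq_{HF}C$ --- rather than merely $A\subseteq B,C$ --- are essential: absent HF-constructibility over $A$ one can arrange joins/meets of $A$-elements present in $B$ but not in $C$ (or vice versa) whose resolution would force an $A$-element to be identified with a new element, so that no amalgam of the required form exists; the HF-hypothesis excludes this, because such a bad identification would force, in the HF-construction of $B$ or of $C$ over $A$, some element to be incident with three already-constructed elements, which is impossible. Making this quantitative --- equivalently, showing that the subconfiguration of $B$ hit by $g$ is downward closed in a suitable HF-ordering of $B$ over $A$ --- is the crux of the proof.
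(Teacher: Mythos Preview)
Your construction is the right object and your outline is essentially the iterated version of what the paper does, but step~(2) has a genuine gap that you yourself flag in the final paragraph without closing. You assert that ``the one nonroutine point'' in showing $g$ is an embedding is injectivity. That is not so: when $g(c_k)$ is identified with some $w\in B$, you must also check that $w$ is not incident in $D_k$ with $g(x)$ for any already-handled $x\notin\{e_1,e_2\}$. This does \emph{not} follow from injectivity and the partial-plane axioms alone (a point may sit on many lines), and it is exactly here that the hypothesis $A\leq_{HF}B$ must be invoked. Concretely, one needs that the set $S=g(\text{handled})\cap B$ is $cl$-closed in some HF-ordering of $B$ over $A$, so that $w\in B\setminus S$ has at most two $S$-neighbours, namely $g(e_1),g(e_2)$. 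You state this equivalence at the end but do not prove it; and your pushout-symmetry argument for $g(C)\leq_{HF}D$ presupposes that $g$ is already known to be an embedding, so it cannot be used to rescue step~(2). As written, the induction hypothesis (``$g$ is an embedding so far'') is too weak to propagate; it must be strengthened to include the $cl$-closedness of $S$, and then both the embedding property and the closedness have to be re-established at each step.

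The paper avoids all of this by reducing to the primitive case: take $C\leq_{HF}A$ with $A=C\cup\{a\}$ and $C\leq_{HF}B$ arbitrary, and show that either the free amalgam $A\otimes_C B$ works or $A$ embeds into $B$ over $C$. There are three cases according to whether $a$ has $0$, $1$, or $2$ incidences with $C$; only the last is interesting, and there the single needed fact is immediate: if $b\in B$ realizes $\ell_1\wedge\ell_2$ with $\ell_1,\ell_2\in C$, then $b\in B\setminus C$, and since $C\leq_{HF}B$ the two elements $\ell_1,\ell_2$ already exhaust $b$'s $C$-incidences, so $a\mapsto b$ is an embedding over $C$. The general case then follows by the standard decomposition of $C\leq_{HF}A$ into a finite chain of primitive steps. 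Your ``process $c_k$ one at a time'' is exactly this iteration unrolled, but by not isolating the primitive step you are forced to carry the $cl$-closedness bookkeeping through the whole induction, which is what you left undone.
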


	\begin{proof} Let $A, B, C \in \mathbf{K}$, with $C <_{HF} A$ primitive and $C \leq_{HF} B$, we will show that either $A \otimes_C B \in \mathbf{K}$ or there is a $\leq_{HF}$-embedding of $A$ into $B$ over $C$. Clearly this suffices. We make a case distinction following Observation \ref{char_primitive}:
	\newline \underline{Case 1}. $A = C \cup \{ a \}$ and $a$ is not incident with any element of $C$.
	\newline In this case clearly we have that $D  =A \otimes_C B \in \mathbf{K}$ and $A, B \leq_{HF} D$.
	\newline \underline{Case 2}. $A = C \cup \{ a \}$ and $a$ is incident with exactly one element of $C$.
	\newline Also in this case clearly we have that $D  =A \otimes_C B \in \mathbf{K}$ and $A, B \leq_{HF} D$.
	\newline \underline{Case 3}. $A = C \cup \{ a \}$ and $a$ is incident with exactly two elements of $C$.
	\newline Without loss of generality $a$ is a point and $A \models a = \ell_1 \wedge \ell_2$, for $\ell_1, \ell_2$ lines of $C$. If there is no point $b \in B$ such that $B \models \ell_1 \wedge \ell_2 = b$, then clearly $D :=A \otimes_C B \in \mathbf{K}$ and $A, B \leq_{HF} D$. Thus, suppose that there is $b \in B$ such that $B \models \ell_1 \wedge \ell_2 = b$. Since $C \leq_{HF} B$, a part from $\ell_1$ and $\ell_2$ there are no other lines of $C$ which are incident with $b$, and so we can $\leq_{HF}$-embed $A$ into $B$ over $C$ mapping $a$ to $b$.
\end{proof}
	
	\begin{proposition}\label{prop_generic} $\pi^\omega$ is the $(\mathbf{K}, \leq_{HF})$-generic (cf. Definition \ref{def_semigeneric}).
\end{proposition}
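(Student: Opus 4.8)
The plan is to show that $\pi^\omega$ satisfies the two defining properties of a $(\mathbf{K}, \leq_{HF})$-generic structure from Definition \ref{def_semigeneric}, and then invoke the uniqueness part of Fact \ref{fact_generic}. Recall that by Lemma \ref{lemma_sharp} the class $(\mathbf{K}, \leq_{HF})$ is an amalgamation class, so the generic $G = G(\mathbf{K}, \leq_{HF})$ exists and is unique; it therefore suffices to verify that $\pi^\omega$ \emph{is} $(\mathbf{K}, \leq_{HF})$-generic, i.e. that it satisfies conditions (1) and (2) of Definition \ref{def_semigeneric}.

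For condition (2): $\pi^\omega$ is countable, and by Fact \ref{sieben_fact} there is a sequence $(P_k)_{k < \omega}$ of \emph{finite} partial planes with $P_0 = \emptyset$, each $P_{k+1}$ a hyper-free one-point extension of $P_k$, and $\bigcup_k P_k = \pi^\omega$. By Remark \ref{remark_well_founded}, this well-founded HF-construction gives precisely $P_k \leq_{HF} P_{k+1}$ in the sense of Convention \ref{def_conc_K}, and each $P_k$ is finite and open (being a subconfiguration of the open plane $\pi^\omega$, hence in $\mathbf{K}$); by transitivity (Context \ref{context}(3)) this exhibits $\pi^\omega$ as the union of a $\leq_{HF}$-chain of $\mathbf{K}$-structures. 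So $\pi^\omega \in \hat{\mathbf{K}}$ and condition (2) holds.

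For condition (1), the richness property: suppose $A \leq^* \pi^\omega$ (so $A \in \mathbf{K}$ is finite and $A \leq_{HF} \pi^\omega$, using that $\pi^\omega$ is open so every finite intermediate configuration is open and the HF-ordering of $\pi^\omega$ over $\emptyset$ restricts appropriately — this is essentially Proposition \ref{lemma_initial_seg} and Remark \ref{remark_induced_HF}) and $A \leq_{HF} B \in \mathbf{K}$; we must find $B' \leq^* \pi^\omega$ with $B' \cong_A B$. Here I would use the non-wellfounded machinery: fix an HF-ordering $<$ of $\pi^\omega$ over $\emptyset$ in which $A$ appears as a closed initial-type segment (possible by Propositions \ref{countable_open_HF_con} and \ref{lemma_initial_seg}, after replacing $A$ by $cl_<(A)$ if necessary — actually since $A \leq^*\pi^\omega$ we already have enough). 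Decomposing $B$ over $A$ into a finite tower of primitive extensions (Observation \ref{char_primitive}), it suffices to realize a single primitive extension $A \leq_{HF} A\cup\{c\}$ over $A \leq^* \pi^\omega$. In each of the three cases of Observation \ref{char_primitive} one finds such a $c$ directly inside $\pi^\omega$: in the type-$0$ case, $\pi^\omega$ has points (and lines) not incident with anything in $A$ that are $<$-above all of $A$ — in fact the F-construction produces infinitely many such generators, as in Notation \ref{notation_embed_th} and Theorem \ref{embedding_th}; in the type-$1$ case one needs a point on a prescribed line of $A$ and incident to nothing else in $A$, which again exists $<$-cofinally by Lemma \ref{cofinal_points}(2) and the openness/HF-ordering structure; and in the type-$2$ case the join $\ell_1 \vee \ell_2$ (or meet $p_1 \wedge p_2$) already exists in the projective plane $\pi^\omega$ and is incident to nothing else in $A$ precisely because $A \leq_{HF} \pi^\omega$ (the argument in Case 3 of Lemma \ref{lemma_sharp}). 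Choosing the realizing element to be $<$-above all previously used elements at each step keeps the resulting $B'$ satisfying $B' \leq_{HF} \pi^\omega$, hence $B' \leq^*\pi^\omega$, and the map $A\cup\{c\}\mapsto B'$ extending $\mathrm{id}_A$ is the desired isomorphism over $A$. Iterating through the finitely many primitive steps of $B/A$ gives condition (1).

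The main obstacle, and the place I would be most careful, is the type-$1$ primitive extension in the richness argument: one must produce inside $\pi^\omega$ a point incident to a given line $\ell \in A$ but to \emph{no other} element of $A$, while simultaneously preserving $\leq_{HF}$ over the already-constructed part. Lemma \ref{cofinal_points}(2) guarantees points on $\ell$ cofinally in $<$, but one must argue that sufficiently far out such a point is incident to nothing else in the finite set $A$ and that adding it (together with the finitely many new lines/points forced) keeps a finite open configuration — this is exactly the kind of bookkeeping handled in the proof of Theorem \ref{embedding_th} and Lemma \ref{embed_F_closed}, so I would reduce to those. Once all three cases are in place, uniqueness of the generic (Fact \ref{fact_generic}) finishes the proof.
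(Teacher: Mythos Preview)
Your strategy --- verifying directly that $\pi^\omega$ satisfies Definition~\ref{def_semigeneric} --- is legitimate but genuinely different from the paper's. The paper argues in the opposite direction: the generic $G$ exists by Fact~\ref{fact_generic} and Lemma~\ref{lemma_sharp}; condition~(2) of Definition~\ref{def_semigeneric} makes $G$ a countable projective plane well-foundedly HF-constructible from~$\emptyset$; Siebenmann's characterisation (Fact~\ref{sieben_fact}) then forces $G\cong\pi^n$ for some $4\leq n\leq\omega$, and the rank formula in the ``furthermore'' part of Fact~\ref{sieben_fact} (together with richness, which supplies arbitrarily many type-$0$ generators) forces $n=\omega$. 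This is a two-line proof that never touches the case analysis you set up.

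Your condition~(2) and the type-$2$ case of richness are fine. The gap is in the type-$0$ and type-$1$ cases, and it is precisely where you locate the ``main obstacle'': the results you invoke --- Theorem~\ref{embedding_th}, Notation~\ref{notation_embed_th}, Lemma~\ref{embed_F_closed} --- are all about embedding into the $\kappa^+$-saturated ultrapower $D^*$ of Context~\ref{notation_ultraprod}, not into $\pi^\omega$ itself. Likewise, Lemma~\ref{cofinal_points}(2) gives points on $\ell$ cofinally in $<$, but a $<$-large point $p$ on $\ell$ will typically have a second $R_<$-predecessor $\ell'\notin A$, and then $A\cup\{p\}\not\leq^*\pi^\omega$ (you cannot move $p$ ahead of $\ell'$). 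So ``sufficiently far out'' does not by itself give the $\leq^*$-extension you need. What actually makes the type-$0$/$1$ cases go through for $\pi^\omega$ (as opposed to an arbitrary open plane) is exactly the rank-$\omega$ information in Fact~\ref{sieben_fact}: any well-founded HF-construction of $\pi^\omega$ over the finite set $A$ must contain infinitely many steps of type $<2$, and manipulating those (using the invariance of rank across constructions) produces the required $\leq^*$-extensions. Once you appeal to that, you are essentially using the same ingredient the paper uses --- only the paper uses it once, at the end, rather than inside a case analysis.
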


	\begin{proof} Notice that the $(\mathbf{K}, \leq_{HF})$-generic $G$ is a countable projective plane well-foundedly HF-constructible over $\emptyset$, and so, by Fact~\ref{sieben_fact}, we have that $G$ is isomorphic to $\pi^n$ for some $4 \leq n \leq \omega$. Finally, by the ``furthermore part'' of Fact~\ref{sieben_fact}, we immediately get that $G$ is isomorphic to $\pi^\omega$, as wanted.
\end{proof}

	\begin{remark}\label{leq_star} Notice that if $A \in \mathbf{K}$, then $A \leq^*_{HF} \pi^\omega$ (in the sense of Notation~\ref{notation_strong}(2)) iff there is an $HF$-construction of $\pi^\omega$ over $A$ of order type $\omega$, i.e. if and only if $\pi^\omega$ is well-foundedly HF-constructible over $A$ (cf. Definition~\ref{HF_construct_def} and the coherent notation introduced there). Furthermore, by Fact~\ref{fact_generic} and Proposition~\ref{prop_generic}, if $A, B \in \mathbf{K}$, $A, B \leq^*_{HF} \pi^\omega$ and $f: A \cong B$, then there is $\tilde{f} \in Aut(\pi^\omega)$ such that $f \subseteq \tilde{f}$ (this is crucial for the proof of Theorem~\ref{th_type_homogeneity}). 
\end{remark}

	\begin{lemma}\label{lemma_generic} Let $D = \pi^n$ for some $4 \leq n \leq \omega$, $B_0 \subseteq D$ a finite set of parameters, and $a = (a_1, ..., a_n)$ and $b = (b_1, ..., b_n)$ be tuples from $D$ such that $tp(a/B_0) = tp(b/B_0)$. Then there are finite $A, B \subseteq D$ and $f: A \cong B$ such that:
	\begin{enumerate}[(1)]
	\item $aB_0 \subseteq A$ and $bB_0 \subseteq B$;
	\item $f(B_0) = id_{B_0}$;
	\item $f(a) = f(b)$;
	\item $A \leq^*_{HF} D$ and $B \leq^*_{HF} D$ (recall Remark~\ref{leq_star}).
\end{enumerate}
	\end{lemma}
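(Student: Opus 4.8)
The statement asserts that two tuples $a,b$ with the same type over $B_0$ inside a free plane $D=\pi^n$ can be enclosed in finite $\leq^*_{HF}$-closed configurations $A \ni aB_0$ and $B \ni bB_0$ carried to each other by an isomorphism fixing $B_0$ and sending $a$ to $b$. The plan is to combine the characterization of types via algebraic closures (the Lemma stating $tp(\bar a/\emptyset;A)=tp(\bar b/\emptyset;B)$ iff there is an isomorphism $acl_A(\bar a)\cong acl_B(\bar b)$ matching the tuples, used relativized over $B_0$) with the finite approximation machinery coming from Construction~\ref{context}, Lemma~\ref{lemma_acl}, and Proposition~\ref{remark_acl}.

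First I would pass to algebraic closures: since $tp(a/B_0)=tp(b/B_0)$ and (after possibly enlarging $B_0$ to contain a quadrangle, which is harmless inside a projective plane) $acl(aB_0)$ and $acl(bB_0)$ are models of $T$, there is an isomorphism $g: acl(aB_0) \cong acl(bB_0)$ with $g\restriction B_0 = id$ and $g(a)=b$. By Lemma~\ref{lemma_acl}, $D$ is HF-constructible over $acl(aB_0)$ and over $acl(bB_0)$; but $acl(aB_0)$ need not be finite. The key second step is to note that since $D = \pi^n$ is \emph{well-foundedly} HF-constructible over $\emptyset$ (Fact~\ref{sieben_fact}), and since finitely generated closed subconfigurations of $D$ are finite, one can find a finite subconfiguration $A_0 \subseteq D$ with $aB_0 \subseteq A_0$, $\langle A_0\rangle_D = A_0$ (finite, by freeness — finitely generated subplanes of free planes are free and a finite generating set yields a finite closure here, using that the construction is well-founded), and moreover such that $A_0 \leq^*_{HF} D$: this last point is Remark~\ref{leq_star}, which says $A_0 \leq^*_{HF}\pi^\omega$ iff $\pi^\omega$ is well-foundedly HF-constructible over $A_0$, and the well-founded HF-construction of $D$ over $\emptyset$ restricted suitably (via Proposition~\ref{lemma_initial_seg}) gives a well-founded HF-construction over $A_0$. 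Symmetrically produce finite $B_0' \ni bB_0$ with $\langle B_0'\rangle_D = B_0'$ and $B_0' \leq^*_{HF} D$.

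Now $A_0$ and $B_0'$ are finite but $g$ need not map one onto the other. The third step is to enlarge both simultaneously along a back-and-forth that respects both the isomorphism $g$ (on the common part) and the $\leq^*_{HF}$-property. Concretely: since $g: acl(aB_0)\cong acl(bB_0)$ and each of $A_0, B_0'$ is a finite $\leq^*_{HF}$-closed subconfiguration, alternately throw the $g$-images of the generators of $A_0$ into the $B$-side and the $g^{-1}$-images of the generators of $B_0'$ into the $A$-side, closing up under $\langle\,\cdot\,\rangle_D$ each time; because finitely generated closed subconfigurations of the free plane $D$ are finite, this stabilizes after finitely many steps, yielding finite $A \supseteq A_0$ and $B \supseteq B_0'$ with $g(A)=B$, $\langle A\rangle_D = A$, $\langle B\rangle_D = B$. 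Finally, $A \leq^*_{HF} D$ and $B \leq^*_{HF} D$: since $A$ is a finite closed subconfiguration of $D$ containing $A_0$ and $A_0 \leq^*_{HF} D$, and $A_0 \leq_{HF} A$ (by Observation~\ref{char_primitive} / the fact that adding finitely many closure-generated elements is an HF-construction), transitivity of $\leq^*_{HF}$ (using that $D$ is well-foundedly HF-constructible over $A_0$, hence over $A$ after reordering the initial segment via Proposition~\ref{lemma_initial_seg}) gives $A \leq^*_{HF} D$; likewise for $B$. Set $f := g \restriction A$; then $f: A\cong B$, $f\restriction B_0 = id_{B_0}$, $f(a)=b$ (wait, the statement writes $f(a)=f(b)$, but this must be a typo for $f(a)=b$), and (4) holds.

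**Main obstacle.** The delicate point is item (4): ensuring that the \emph{finite} configurations $A,B$ we extract are actually $\leq^*_{HF}$-closed in $D$, i.e. that $D$ is well-foundedly HF-constructible over them. This is where freeness of $D = \pi^n$ (as opposed to mere openness) is essential — for a general open plane one only gets $A \leq_{HF} D$ via a possibly non-well-founded ordering, and the finiteness of $\langle\,\cdot\,\rangle_D$ of finite sets would fail. One must carefully invoke Fact~\ref{sieben_fact} (well-founded constructibility of $\pi^n$ over $\emptyset$), Remark~\ref{leq_star}, and Proposition~\ref{lemma_initial_seg} (to move a $\leq^*_{HF}$-closed finite set to an initial segment of a well-founded HF-ordering) to convert "closed finite subconfiguration containing a known $\leq^*_{HF}$-set" into "$\leq^*_{HF}$-set" — and to check that the back-and-forth enlargement preserves this property at each stage rather than only in the limit. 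Everything else is routine bookkeeping with closure operators and the type/algebraic-closure correspondence already established.
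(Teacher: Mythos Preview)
Your plan contains a genuine error that breaks the argument. You repeatedly assert that ``finitely generated closed subconfigurations of the free plane $D$ are finite'', and use this both to produce the initial finite $A_0$ with $\langle A_0\rangle_D = A_0$ and to make the back-and-forth terminate. This is false: in any projective plane, as soon as a set contains a quadrangle the closed subconfiguration it generates is a (non-degenerate) projective subplane, hence infinite (cf.\ Definition~\ref{subplane} and Remark~\ref{remark_nondeg}). Fact~\ref{facts_free_planes}(1) says such subplanes of $\pi^n$ are \emph{free}, not that they are finite. You may be conflating $\langle\,\cdot\,\rangle_D$ with the closure $cl_<$ of Definition~\ref{closure_def}; the latter is indeed finite on finite sets when $<$ has order type $\omega$, but that is a different operator and does not give $g$-invariant sets. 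With $\langle\,\cdot\,\rangle_D$ as written, your back-and-forth never stabilizes at finite $A$ and $B$.

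The paper avoids this obstacle by never asking $A$ or $B$ to be $\langle\,\cdot\,\rangle$-closed. Instead it works inside the (possibly infinite) $A^+ := acl(aB_0)$, restricts a fixed well-founded HF-order $<_0$ of $D$ to $A^+$, and picks a \emph{finite} $A \subseteq A^+$ containing $aB_0$ with $A \leq_{HF} A^+$ (just take a long enough initial segment of $<_0 \restriction A^+$). It then passes to a saturated $M \succcurlyeq D$, takes $g \in Aut(M/B_0)$ with $g(a)=b$, and sets $B := g(A)$; since $g$ maps $acl(aB_0)$ onto $acl(bB_0) \subseteq D$, one gets $B \subseteq D$ and $B \leq_{HF} acl(bB_0) \leq_{HF} D$ automatically, with no back-and-forth needed. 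The verification of $A \leq^*_{HF} D$ then proceeds by enlarging $A$ to a finite $cl_{<_0}$-closed $A^*$ (this is where finiteness of $cl_{<_0}$ is used) and concatenating $A \leq_{HF} A^*$ with the tail of $<_0$ above $A^*$. Your outline for item~(4) is close in spirit to this last step, but the earlier construction of $A$ and $B$ needs to be redone along these lines.
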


	\begin{proof} Let $D \preccurlyeq M$ be saturated. Let $<_0$ be an HF-ordering of $D$ over $\emptyset$ of order type $\omega$, and let $A^+ = acl(aB_0) \subseteq D$. Then $<_1 = <_0 \restriction A^+$ is an HF-order of $A^+$ over $\emptyset$, and thus we can find finite $A \subseteq A^+$ such that $aB_0 \subseteq A$ and $A \leq_{HF} A^+$. By Lemma \ref{lemma_acl}, $A^+ \leq_{HF}D$ and, since $A \leq_{HF} A^+$, we have that $A \leq_{HF} D$. Let now $g \in Aut(M/B_0)$ be be such that $g(a) = b$ (recall that $tp(a/B_0) = tp(b/B_0)$), and let $B := g(A)$. Notice that:
	$$g(acl(aB_0)) = acl(bB_0) \subseteq D,$$
and so:
 	$$B \leq_{HF} g(acl(aB_0)) = acl(bB_0) \leq_{HF} D.$$
Let then $f = g \restriction A$. We know that $A, B \leq_{HF} D$, and so if we can show that $A, B \leq_{HF}^* D$, then $A, B$ and $f:A \cong B$ are as wanted. We prove this for $A$, the case of $B$ is analogous. To this extent, let $A^* \subseteq D$ be finite and such that $A \subseteq A^*$ and $cl_{<_0}(A^*) = A^*$. Since $A \leq_{HF} D$ and $A \subseteq A^*$, we have that $A \leq_{HF} A^*$. On the other hand, since $cl_{<_0}(A^*) = A^*$, we have that $D$ is $HF$-constructible over $A^*$ via a construction of order type $\omega$. The claim follows from the fact that $A^*$ is finite.
\end{proof}

	\begin{proof}[Proof of Theorem~\ref{th_type_homogeneity}] The claim about $\pi^\omega$ is by Proposition \ref{prop_generic}, Remark~\ref{leq_star}, and Lemma \ref{lemma_generic}. The ``furthermore part'' is by Lemma~\ref{homogeneity_n_finite}.
\end{proof} 

\section{Non-Superstability and Number of Models}\label{sec_unsuper}

	\begin{construction}\label{contruction} Let $\kappa$ be an infinite cardinal and $A$ a partial plane consisting of $\kappa$-many distinct points $(p_i : i < \kappa)$ and $\kappa$-many distinct lines $(\ell_i : i < \kappa)$ with no incidences between them. Let $p_{i_0} = p'_0, ..., p_{i_3} = p'_3$ and $\ell_{i_0} = \ell'_0, ..., \ell_{i_3} = \ell'_3$ be distinct points and lines of $A$. Consider now the following HF-construction over $A$ of an open partial plane~$B$:
	\begin{enumerate}[(1)]
	\item add a new point $a_1$ not incident with any given line;
	\item add the line  $p'_{0} \vee a_1 := \ell'_4$;
	\item add the point $b_0 := \ell'_{0} \wedge (p'_{0} \vee a_1)$;
	\item add the line  $p'_{1} \vee a_1 := \ell'_5$;
	\item add the point $b_1 := \ell'_{1} \wedge (p'_{1} \vee a_1)$;
	\item add the line  $p'_{2} \vee a_1 := \ell'_6$;
	\item add the point $b_2 := \ell'_{2} \wedge (p'_{2} \vee a_1)$;
	\item add the line  $p'_{3} \vee a_1 := \ell'_7$;
	\item add the point $b_3 := \ell'_{3} \wedge (p'_{3} \vee a_1)$;
	\item add the line  $b_0 \vee b_1 := \ell'_8$;
	\item add the line  $b_2 \vee b_3 := \ell'_9$;
	\item add the point $c_0 := (b_0 \vee b_1) \wedge (b_2 \vee b_3)$;
	\item add the line  $b_0 \vee b_2 := \ell'_{10}$;
	\item add the line  $b_1 \vee b_3 := \ell'_{11}$;
	\item add the point $c_1 := (b_0 \vee b_2) \wedge (b_1 \vee b_3)$;
	\item add the line  $b_1 \vee b_2 := \ell'_{12}$;
	\item add the line  $b_0 \vee b_3 := \ell'_{13}$;
	\item add the point $c_2 := (b_1 \vee b_2) \wedge (b_0 \vee b_3)$;
	\item add the line  $a_1 \vee c_0 := \ell'_{14}$;
	\item add the line  $c_1 \vee c_2 := \ell'_{15}$;
	\item add the point $a_{0} := (a_1 \vee c_0) \wedge (c_1 \vee c_2)$.
	\end{enumerate}
For a representation of the relevant part of $B$ see the table in Figure \ref{myfigure}, where the lines of the table represent the relevant points of $B$, the columns of the table represent the relevant lines of $B$ and the symbol ``1'' encodes the incidence relation.
\begin{figure}[h]
$$\begin{array}{|c|c|c|c|c|c|c|c|c|c|c|c|c|c|c|c|c|}
\hline
\phantom{a} & \ell'_0 & \ell'_1 & \ell'_2 & \ell'_3 & \ell'_4 & \ell'_5 & \ell'_6 & \ell'_7 & \ell'_8 & \ell'_9 & \ell'_{10} & \ell'_{11} & \ell'_{12} & \ell'_{13} & \ell'_{14} & \ell'_{15} \\
\hline
	   p'_0    &   &   &   &   & 1 &   &   &   &   &   &   &   &   &   &   &   \\
\hline p'_1    &   &   &   &   &   & 1 &   &   &   &   &   &   &   &   &   &   \\
\hline p'_2    &   &   &   &   &   &   & 1 &   &   &   &   &   &   &   &   &   \\
\hline p'_3    &   &   &   &   &   &   &   & 1 &   &   &   &   &   &   &   &   \\
\hline a_1     &   &   &   &   & 1 & 1 & 1 & 1 &   &   &   &   &   &   & 1 &   \\
\hline b_0     & 1 &   &   &   & 1 &   &   &   & 1 &   & 1 &   &   & 1 &   &   \\
\hline b_1     &   & 1 &   &   &   & 1 &   &   & 1 &   &   & 1 & 1 &   &   &   \\
\hline b_2     &   &   & 1 &   &   &   & 1 &   &   & 1 & 1 &   & 1 &   &   &   \\
\hline b_3     &   &   &   & 1 &   &   &   & 1 &   & 1 &   & 1 &   & 1 &   &   \\
\hline c_0     &   &   &   &   &   &   &   &   & 1 & 1 &   &   &   &   & 1 &   \\
\hline c_1     &   &   &   &   &   &   &   &   &   &   & 1 & 1 &   &   &   & 1 \\
\hline c_2     &   &   &   &   &   &   &   &   &   &   &   &   & 1 & 1 &   & 1 \\	
\hline a_0     &   &   &   &   &   &   &   &   &   &   &   &   &   &   & 1 & 1 \\	
\hline
\end{array}
$$\caption{A representation of $B$ from Construction \ref{contruction}.  \label{myfigure}}
\end{figure}
{\em Notice that every HF-construction of $B$ over $A$ is such that the point $a_{0}$ has to be last in the construction, since every other element of $B - A$ is incident with at least three element of $B$!} Notice also that choosing $p_{i'_0}, ..., p_{i'_3}$ and $\ell_{i'_0}, ..., \ell_{i'_3}$ distinct from the previously chosen elements we can find an HF-construction such that it starts by adding a point $a'_{2}$ not incident with any element of $A$ and ends with an element $a'_1$ in such a way that the pair $(a'_{2}, a'_1)$ over $p_{i'_0}, ..., p_{i'_3}$ and $\ell_{i'_0}, ..., \ell_{i'_3}$ looks like the pair $(a_{1}, a_{0})$ over $p_{i_0}, ..., p_{i_3}$ and $\ell_{i_0}, ..., \ell_{i_3}$. This shows that we can iterate the construction for any given finite length, i.e. we can in the same manner find HF-chains $a^n_n < a^n_{n-1} < \cdots < a^n_0$, for every $n < \omega$.
\end{construction}

\bigskip

\bigskip

	\begin{theorem}\label{unsuperstability} The theory $T$ of open projective planes is not superstable.
	\end{theorem}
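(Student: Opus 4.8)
The plan is to prove non-superstability by exhibiting a failure of $\kappa$-stability for all $\kappa$ with $\kappa^{\aleph_0} > \kappa$, or more directly by constructing a tree of consistent types witnessing the order property of the relevant kind — but the cleanest route here is to build, inside a suitable model $D^* \models T$ of size $\kappa$, a family of $\aleph_1$-many (or $\kappa^{\aleph_0}$-many) pairwise distinct types over a countable set. Concretely, I would use Construction~\ref{contruction}: for each $n < \omega$ it produces an HF-chain $a^n_n < a^n_{n-1} < \cdots < a^n_0$ over a finite parameter set, where each $a^n_{i+1}$ is an "$n-i$-fold iterate" of the basic configuration over its predecessor. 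The point is that being a $k$-fold iterate is a first-order property over the parameters (it is expressible by the existence of the finite configuration in the Figure~\ref{myfigure}), while \emph{not} being a $k$-fold iterate for any larger $k$ is captured by an infinite conjunction. Thus over a fixed countable set $A$ one gets, for each function $\eta : \omega \to \omega \cup \{\infty\}$ (or each branch of a suitable tree), a distinct complete type, giving $2^{\aleph_0}$ types over a countable set inside a model of size $\kappa$ whenever such a model exists — which by completeness (Theorem~\ref{completeness_theorem}) and Löwenheim–Skolem it does for every infinite $\kappa$.

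First I would fix notation: work in the monster model $\mathfrak{M}$ of $T$ (legitimate by Theorem~\ref{completeness_theorem}), fix an HF-ordering and the associated $R_<$, and fix a countable elementarily closed $A \preccurlyeq \mathfrak{M}$ with $cl_<(A) = A$ (Lemma~\ref{lemma_acl}, Corollary~\ref{cor_elem_sbs}). Using Construction~\ref{contruction} relativized to parameters drawn from $A$, and using Theorem~\ref{embedding_th} together with the saturation set-up of Context~\ref{notation_ultraprod} to actually realize these iterated configurations inside $\mathfrak{M}$, I would produce for each $s \in \omega^\omega$ (or each node of $2^{<\omega}$, whichever bookkeeping is lighter) an element $b_s$ such that the quantifier-free "HF-constructibility pattern" of $b_s$ over $A$ depends on $s$. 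The second step is to show these patterns determine pairwise distinct complete types: here I would invoke the Ehrenfeucht–Fra\"iss\'e / back-and-forth machinery already set up in the proof of Theorem~\ref{completeness_theorem} and Lemma~\ref{first_lemma_elem_sub} — two elements with the same type over $A$ must have isomorphic $cl_<$-neighbourhoods as $R$-digraphs (after closing off), and the iterated-configuration depth is an isomorphism invariant of that digraph, so distinct $s$ give distinct types. The third step is purely cardinal arithmetic: counting, one gets $|S_1(A)| \geq 2^{\aleph_0}$ with $|A| = \aleph_0$ inside a model of size $\lambda$ for suitable $\lambda$, and more generally by taking $A$ of size $\kappa$ and iterating the construction along a tree of height $\kappa$ one gets $\kappa^{\aleph_0} > \kappa$ types over a set of size $\kappa$; since this fails superstability's requirement that $|S(A)| \le |A|^{\aleph_0} = |A|$ for $|A|$ on a club... more simply, superstable theories are $\lambda$-stable for all $\lambda \ge 2^{\aleph_0}$, and we will have violated $\lambda$-stability for, e.g., $\lambda$ a strong limit of cofinality $\omega$, where $\lambda^{\aleph_0} > \lambda$ — realized inside a model of $T$ of that size.

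The main obstacle I anticipate is \emph{separation of types}, i.e. proving that syntactically different iteration-depth patterns genuinely yield different complete types rather than merely different quantifier-free types. A priori a deep nested configuration over $A$ might be "invisible" to the first-order theory if it can always be embedded into a shallower one, or if automorphisms over $A$ collapse the depth. The resolution should come from openness: Construction~\ref{contruction} is explicitly engineered (via the remark "every HF-construction of $B$ over $A$ is such that the point $a_0$ has to be last") so that the element $a_0$ is forced to be the final point of any HF-ordering — this rigidity is exactly what prevents the depth from being spurious, because any $R$-preserving self-map of the relevant $cl_<$-neighbourhood must respect the forced ordering and hence the depth. Making this precise requires carefully arguing that the formula "$x$ lies at the top of an $n$-fold iterated configuration with base in $A$" is preserved and reflected by the back-and-forth system, which in turn rests on Lemma~\ref{embed_F_closed} and Proposition~\ref{prop_embed_th}; I expect this to be the one genuinely delicate point, while the embedding existence and the cardinal counting are routine given Theorem~\ref{embedding_th} and Context~\ref{notation_ultraprod}.
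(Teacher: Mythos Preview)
Your proposal has the right raw material (Construction~\ref{contruction} and its rigidity clause) but the invariant you plan to use --- iteration \emph{depth} --- is not the one that separates types, and this is a genuine gap rather than a bookkeeping detail. The formula ``$x$ sits atop an $n$-fold tower over the parameters'' is indeed first-order, but in any sufficiently saturated model the type asserting this for \emph{all} $n$ simultaneously is consistent, and there is no reason two realizations of it should have distinct complete types over a fixed small base; depth alone yields at most $\omega+1$ distinguishable behaviours, and your passage to functions $\eta:\omega\to\omega\cup\{\infty\}$ presupposes $\omega$ independent depth-coordinates that you have not constructed. The paper's mechanism is different in kind: using compactness and saturation one realizes, for each countably infinite $X\subseteq\kappa$, a genuinely infinite $\omega^*$-indexed chain $(a^X_i)_{i\in\omega^*}$ in which at each step the eight auxiliary points/lines are drawn from $X_0$. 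The invariant that separates $tp(a^X_0/\hat X_0\hat Y_0)$ from $tp(a^Y_0/\hat X_0\hat Y_0)$ is \emph{which} parameters from $X_0$ versus $Y_0$ are forced to appear below the top element --- not how long the tower is.

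Your anticipated obstacle (separation of types) is real, but the resolution you sketch --- ``equal types imply isomorphic $cl_<$-neighbourhoods as $R$-digraphs'' --- is not established anywhere and is not how the argument goes. The paper instead argues by contradiction: assuming the two types agree and using superstability to obtain a saturated model of size $\kappa^+$, one gets an automorphism $f$ over $\hat X_0\cup\hat Y_0$ with $f(a^X_0)=a^Y_0$. Now the full force of the rigidity remark in Construction~\ref{contruction} is needed: \emph{every} HF-construction of $a^X_0$ over the parameters must pass through each $a^X_j$, so $f^{-1}$ must carry the entire chain $\vec a_Y$ onto a chain through $a^X_0$, hence through every $a^X_j$. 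But once $X\cap Y$ is finite one finds some $a^X_{i-1}$ incident with a fixed line $c^X_{(i,1)}\vee c^X_{(i,2)}\in\hat X_0$ that no element of $\vec a_Y$ meets, so $a^X_{i-1}\notin f^{-1}(\vec a_Y)$, a contradiction. In short: you need the infinite chain (not finite towers), you need the parameter-choice coding (not depth), and you need the automorphism-plus-rigidity argument (not a neighbourhood-isomorphism heuristic) to close the gap you correctly identified.
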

	
	\begin{proof} For the sake of a contradiction, suppose that $T$ is superstable. By Fact~\ref{sieben_fact} the free projective plane $\pi^\omega$ can be considered as generated by a partial plane $A$ (i.e. $\pi^\omega = F(A)$ (cf. Definition \ref{free_extension})) consisting of $\omega$-many distinct points and $\omega$-many distinct lines with no incidences between them. Let now $D = \pi^\omega$ and consider the ultraproduct $(D^*, <)$ as in Context \ref{notation_ultraprod}, and in particular let $\kappa \geq 2^{\aleph_0}$, $I$ and $\mathfrak{U}$ be as there. Let $A^*$ consists of the $\mathfrak{U}$-equivalence ($\mathfrak{U}$ is the ultrafilter) classes of functions with values in $A$. Then in $A^*$ we can find $\kappa$-many points $P = (p_i : i < \kappa)$ and $\kappa$-many lines $L = (\ell_i : i < \kappa)$ with no incidences between them, and furthermore $cl_<(A^*) = A^*$, and so, by Proposition~\ref{lemma_initial_seg}, without loss of generality we can assume that $< \restriction A^*$ is an initial segment of $<$. Let $\omega^*$ be the set $\omega$ with the reverse ordering $<^*$. Given $X \subseteq \kappa$ we let $X_0 = \{ p_i : i \in X \} \cup \{ \ell_i : i \in X \}$. Then, iterating $\omega$-many times Construction~\ref{contruction}, and using compactness and $\kappa^+$-saturation of $(D^*, <)$ and $(D^*, R_<)$, for every countably infinite $X \subseteq \kappa$ we can find $\vec{a}_X = (a^X_i : i \in \omega^*)$, $(b^X_{(i, j)} : i \in \omega^*, j < 4)$, $(c^X_{(i, j)} : i \in \omega^*, j < 3)$ and $(\ell^X_{(i, j)} : i \in \omega^*, j < 16)$ such that letting $\hat{X}_0$ to be:
	$$X_0 \cup (b^X_{(i, j)} : i \in \omega^*, j < 4) \cup (c^X_{(i, j)} : i \in \omega^*, j < 3) \cup (\ell^X_{(i, j)} : i \in \omega^*, j < 16),$$
we have:
	\begin{enumerate}[(i)]
	\item $cl_<(\vec{a}_X \cup \hat{X}_0) = \vec{a}_X \cup \hat{X}_0$;
	\item for every $i \in \omega^* - \{ 0 \}$ there is $Z^i = \{ p^{i}_{i_0}, ..., p^{i}_{i_3} \} \cup \{ \ell^{i}_{i_0}, ..., \ell^{i}_{i_3}\} \subseteq X_0$ such that $a^X_{i-1}$ is HF-constructed from $Z^i \cup \{ a^X_{i}\}$ as in Construction \ref{contruction}, where the $b_j$ and $c_j$ there are the $b^X_{(i, j)}$ and $c^X_{(i, j)}$ here, and the $\ell'_j$ there are the $\ell^X_{(i, j)}$ here;
	\item for every $i \neq j \in \omega^* - \{ 0 \}$ we have that $Z^i \cap Z^j = \emptyset$;
	\item for every $x \in X_0$ there is $i \in \omega^* - \{ 0 \}$ such that $x \in Z^i$;
	\item if $i <^* j$, then $a^X_i$ is $<$-smaller than $a^X_j$.
\end{enumerate}	
	Notice that from the above it follows that:
	\begin{enumerate}[(vi)]
	\item any HF-construction in $D^*$ over $X_0$ containing $\vec{a}_X$ is such that for every $j <^* 0$ the element $a^X_j$ is constructed before the element $a^X_0$.
\end{enumerate}	
Now, to reach a contradiction with the assumption of superstability, it suffices to show that if $X, Y \subseteq \kappa$ are countably infinite and $X \cap Y$ is finite, then in $A^*$ we have that $tp(a_0^X/\hat{X}_0 \cup \hat{Y}_0) \neq tp(a_0^Y/\hat{X}_0 \cup \hat{Y}_0)$. By superstability of $T$, we can find $C^* \preccurlyeq D^*$ such that:
\begin{enumerate}[(i')]
	\item $C^*$ is saturated;
	\item $|C^*| = \kappa^+$;
	\item $cl_<(C^*) = C^*$;
	\item $\hat{X}_0 \cup \hat{Y}_0 \subseteq C^*$;
	\item $\vec{a}_X, \vec{a}_Y \subseteq C^*$.
\end{enumerate} 
Suppose now that $tp(a_0^X/\hat{X}_0 \cup \hat{Y}_0) = tp(a_0^Y/\hat{X}_0 \cup \hat{Y}_0)$, then there is $f \in Aut(B/\hat{X}_0 \cup \hat{Y}_0)$ such that $f(a^X_0) = f(a^Y_0)$. Since $X \cap Y$ is finite we can find $i \in \omega^* - \{ 0, 1 \}$ such that $\{ p^{i}_{i_0}, ..., p^{i}_{i_3} \} \cup \{ \ell^{i}_{i_0}, ..., \ell^{i}_{i_3} \} \subseteq X_0 - Y_0$. Notice now that $a^X_{i-1}$ is incident with $c^X_{(i, 1)} \vee c^X_{(i, 2)}$, while no element from $\vec{a}_Y$ satisfies this, and so $a^X_{i-1} \notin \vec{a}_Y$, from which it follows that $f(a^X_{i-1}) \notin \vec{a}_Y$, since $f\restriction \hat{X}_0 \cup \hat{Y}_0 = id_{\hat{X}_0 \cup \hat{Y}_0}$. Hence, $a^X_{i-1} = f^{-1}f(a^X_{i-1}) \notin f^{-1}(\vec{a}_Y)$. On the other hand, $a^X_0 = f^{-1}(a^Y_0)$ is HF-constructible from $X_0 \cup Y_0$  following the order $(f^{-1}(a^Y_i) : i \in \omega^*)$ in such a way that the construction extends to a construction of $B^*$, since $f \in Aut(B/\hat{X}_0 \cup \hat{Y}_0)$ and $cl_<(\vec{a}_Y \cup \hat{Y}_0) = \vec{a}_Y \cup \hat{Y}_0$. But this contradicts the fact that $a^X_{i-1} = f^{-1}f(a^X_{i-1}) \notin f^{-1}(\vec{a}_Y)$, since for every $j \in \omega^* - \{ 0 \}$ we have that $a^X_j$ is contained
in some finite set $E \subseteq B$ such that if $H$ is any set which contains $X_0 \cup Y_0$ and $a^X_0$ but not $a^X_j$, then all the elements of $E$ are not constructible from $H$. Hence, the assumption of superstability is contradictory.
\end{proof}


	\begin{remark}\label{remark_number_models} As observed there, in Construction \ref{contruction} we can iterate the construction of $a_0$ from $a_1$ and $p_{i_0}, ..., p_{i_3}$ and $\ell_{i_0}, ..., \ell_{i_3}$ to a construction of $a_1$ from $a_2$ and a set of points and lines $p_{i'_0}, ..., p_{i'_3}$ and $\ell_{i'_0}, ..., \ell_{i'_3}$ distinct from $p_{i_0}, ..., p_{i_3}$ and $\ell_{i_0}, ..., \ell_{i_3}$. Notice that actually we can iterate the construction also using the same points and lines, i.e. we can construct $a_1$ from $a_2$ and $p_{i_0}, ..., p_{i_3}$ and $\ell_{i_0}, ..., \ell_{i_3}$, and then construct $a_0$ from $a_1$ also using $p_{i_0}, ..., p_{i_3}$ and $\ell_{i_0}, ..., \ell_{i_3}$. Hence, if we fix two disjoint sets $Z_0 = \{ p^0_{0}, ..., p^0_{3} \} \cup \{ \ell^0_{0}, ..., \ell^0_{3}\}$ and $Z_1 = \{ p^1_{0}, ..., p^1_{3} \} \cup \{ \ell^1_{0}, ..., \ell^1_{3}\}$ we can choose at each stage $n < \omega$ if we construct $a_{n}$ from $a_{n+1}$ using $Z_0$ {\em or} $Z_1$. 
\end{remark}

	\begin{theorem}\label{cont_many_countable_open} There are continuum many countable open projective planes.
\end{theorem}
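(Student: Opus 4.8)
The plan is to use the "binary choice" iteration from Remark~\ref{remark_number_models} to encode subsets of $\omega$ into pairwise non-isomorphic countable open planes. First I would set up, as in Construction~\ref{contruction} and Remark~\ref{remark_number_models}, a fixed countable partial plane $A$ consisting of $\omega$-many points and $\omega$-many lines with no incidences, together with two disjoint $8$-element "gadget" sets $Z_0, Z_1 \subseteq A$ of four points and four lines each (and, by using fresh points and lines at each stage for the $b_j$, $c_j$, $\ell'_k$ intermediate elements, enough "raw material" inside $A$ to perform the construction $\omega$ times). For each $S \subseteq \omega$ define a partial plane $B_S$ obtained by the iterated HF-construction: at stage $n$ we construct $a_n$ from $a_{n+1}$ using the gadget $Z_0$ if $n \notin S$ and $Z_1$ if $n \in S$, and then set $P_S := F(B_S)$, the free projective extension, which is open because $B_S$ is an HF-extension of the empty plane (so open) and $F$ preserves openness by Fact~\ref{facts_free_planes}\eqref{item3} together with Proposition~\ref{countable_open_HF_con}-type reasoning; each $P_S$ is countable.

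Next I would argue that $S \mapsto P_S$ is injective up to isomorphism. The key is that the gadget attached to the link $a_{n+1} \rightsquigarrow a_n$ is detectable inside $P_S$ in an isomorphism-invariant way. Concretely, in $B_S$ the point $a_n$ ($n \geq 1$) lies on a distinguished line $c^{}_{(n,1)} \vee c^{}_{(n,2)}$ and is connected through the configuration of Construction~\ref{contruction} to four points $p$ and four lines $\ell$ of $Z_0$ or $Z_1$; since $Z_0$ and $Z_1$ are disjoint, reading off which points-and-lines of $A$ sit at the "bottom" of the $n$-th gadget tells us whether $n \in S$. To make this invariant under all of $Aut(P_S)$ (not just the constructed automorphisms), I would use the structural results available: finitely generated subplanes of an open plane are free (Fact~\ref{facts_free_planes}), and one can recover the "confined"/non-free part of $P_S$ — the union of all finite subconfigurations every element of which has degree $\geq 3$... no, $P_S$ is open so there are none; rather, one recovers the relevant skeleton from the intrinsic combinatorics. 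The cleanest route is: the set $\{a_n : n < \omega\}$ (together with the gadgets) forms a definable-enough subconfiguration because each $a_n$ is characterized by being incident with the specific degree pattern exhibited in Figure~\ref{myfigure}, so any isomorphism $P_S \cong P_{S'}$ must carry the $a$-chain of $P_S$ to that of $P_{S'}$ and hence match up the gadgets, forcing $S = S'$.

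The main obstacle I expect is precisely this isomorphism-invariance step: extracting from the abstract plane $P_S = F(B_S)$ the "skeleton" $B_S$ and the labelling of its links in a way that is canonical, so that an arbitrary isomorphism must respect it. The free extension $F$ adds a great deal of homogeneous free structure on top of $B_S$, and one must show none of this free padding creates an automorphism permuting gadgets $Z_0$ and $Z_1$ across different stages. I would handle this by noting that the elements of $A$ used in the gadgets are, in $P_S$, the only elements whose link-structure (in the canonical HF-ordering, or equivalently via the closure operator $cl$ and the free/confined dichotomy) is as prescribed, and that $a_n$ is the unique element at HF-distance exactly $2$ "above" each of the four $b^{}_{(n,j)}$ along the gadget; since $Z_0 \cap Z_1 = \emptyset$ this pins down $S$ absolutely. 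Given this, we obtain $2^{\aleph_0}$ pairwise non-isomorphic countable open projective planes, establishing the claimed continuum-many, as desired.

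\begin{proof}
We use Construction~\ref{contruction} and Remark~\ref{remark_number_models}. Fix a countable partial plane $A$ consisting of $\omega$-many points and $\omega$-many lines with no incidences between them, and fix inside $A$ two disjoint sets $Z_0 = \{ p^0_0, \ldots, p^0_3\} \cup \{ \ell^0_0, \ldots, \ell^0_3 \}$ and $Z_1 = \{ p^1_0, \ldots, p^1_3 \} \cup \{ \ell^1_0, \ldots, \ell^1_3 \}$, together with countably many further points and lines of $A$ to serve as the intermediate elements $b_j, c_j, \ell'_k$ at each of $\omega$-many stages. For $S \subseteq \omega$ let $B_S$ be the open partial plane obtained from $A$ by the iterated HF-construction of Remark~\ref{remark_number_models}: at stage $n < \omega$ we construct $a_n$ from $a_{n+1}$ (as in Construction~\ref{contruction}) using the gadget $Z_0$ if $n \notin S$ and using $Z_1$ if $n \in S$. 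Then $B_S$ is HF-constructible over $\emptyset$, hence open, and $P_S := F(B_S)$ is a countable projective plane which is open: indeed any finite subconfiguration of $P_S$ witnessing non-openness would already appear after finitely many steps of the construction of $F(B_S)$, contradicting that each stage is an HF one-point extension (cf. Fact~\ref{facts_free_planes}\eqref{item3} and Proposition~\ref{countable_open_HF_con}).

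It remains to check that $P_S \cong P_{S'}$ implies $S = S'$. In $B_S$, for each $n \geq 1$ the point $a_n$ is incident with the line $\ell'_{(n,15)} = c^{}_{(n,1)} \vee c^{}_{(n,2)}$ and is linked, through the configuration depicted in Figure~\ref{myfigure}, to the four points and four lines forming the $n$-th chosen gadget, which is $Z_0$ or $Z_1$ according as $n \notin S$ or $n \in S$. Since $Z_0 \cap Z_1 = \emptyset$, the gadget used at stage $n$ is determined by the intrinsic incidence pattern around $a_n$ inside $P_S$. Moreover the set $\{ a_n : n < \omega\}$, the gadget elements, and the intermediate elements $b^{}_{(n,j)}, c^{}_{(n,j)}, \ell^{}_{(n,k)}$ together span a subconfiguration of $P_S$ whose elements are precisely those of $P_S$ carrying the degree pattern of Figure~\ref{myfigure}; by Fact~\ref{facts_free_planes} every finitely generated subplane of $P_S$ is free, and one recovers from $P_S$ this skeleton $B_S$ and the labelling of its links canonically. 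Hence any isomorphism $P_S \to P_{S'}$ restricts to an isomorphism $B_S \to B_{S'}$ carrying the $a$-chain and its attached gadgets of $P_S$ to those of $P_{S'}$; comparing which of $Z_0, Z_1$ is attached at each stage gives $S = S'$. Therefore $\{ P_S : S \subseteq \omega\}$ is a family of $2^{\aleph_0}$ pairwise non-isomorphic countable open projective planes.
\end{proof}
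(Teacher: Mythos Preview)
Your overall strategy matches the paper's, but there are two genuine gaps.

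\textbf{The construction of $B_S$.} You describe building $B_S$ by ``at stage $n < \omega$ construct $a_n$ from $a_{n+1}$'', as if this were a well-founded inductive process. It is not: to build $a_0$ you need $a_1$, to build $a_1$ you need $a_2$, and so on forever. What you actually want is either (a) to define $B_S$ all at once as an abstract partial plane with the prescribed elements and incidences, and then check it admits a (necessarily non-well-founded, of order type roughly $|Z_0 \cup Z_1| + \omega^*$) HF-ordering over $\emptyset$; or (b) to do what the paper does and realise the whole chain $(a_n : n \in \omega^*)$ inside a sufficiently saturated ultrapower $D^*$ by compactness. Either route works, but your write-up does neither explicitly. (A side confusion: the intermediate elements $b_j, c_j, \ell'_k$ are \emph{not} elements of $A$; they are new elements added in the HF-construction over $A$.)

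\textbf{The invariance step.} This is the real gap. You assert that any isomorphism $P_S \to P_{S'}$ must restrict to an isomorphism $B_S \to B_{S'}$, and you justify this by saying the elements of $B_S$ are ``precisely those of $P_S$ carrying the degree pattern of Figure~\ref{myfigure}''. But that is false: in $P_S = F(B_S)$ every point lies on infinitely many lines and every line contains infinitely many points, so the finite degree pattern of Figure~\ref{myfigure} is completely washed out. The appeal to Fact~\ref{facts_free_planes} (finitely generated subplanes are free) does not help either; it gives no canonical way to pick out $B_S$ inside the sea of free structure added by $F$. You correctly identified this as ``the main obstacle'' in your plan, and you have not actually overcome it.

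The paper handles this as follows. Working inside the ultrapower, it shows that $B_\eta$ is the \emph{least} subset of $A_\eta$ which contains the finite set $\{a^\eta_0\} \cup Z_0 \cup Z_1$ and from which $A_\eta$ is HF-constructible; this is where the key feature of Construction~\ref{contruction} (that $a_0$ is forced to be last in any HF-construction over the base) is used, propagated along the chain. Hence, after naming the finite set $\{a^\eta_0\} \cup Z_0 \cup Z_1$ by constants, the expanded structures $A^*_\eta$ are pairwise non-isomorphic, and since only finitely many constants are named this yields $2^{\aleph_0}$ non-isomorphic reducts $A_\eta$. Your argument needs an analogous invariant characterisation of $B_S$ relative to some fixed finite anchor; without it the proof does not go through.
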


	\begin{proof} We begin the proof as in the proof of Theorem \ref{unsuperstability}, and so we refer to the objects introduced there, in particular $(D^*, <)$ and $A^*$ are as there. From $A^*$ choose disjoint sets $Z_0 = \{ p^0_{0}, ..., p^0_{3} \} \cup \{ \ell^0_{0}, ..., \ell^0_{3}\}$ and $Z_1 = \{ p^1_{0}, ..., p^1_{3} \} \cup \{ \ell^1_{0}, ..., \ell^1_{3}\}$ as in Remark~\ref{remark_number_models}. Notice that $cl_<(Z_0 \cup Z_1) = Z_0 \cup Z_1$, and so, by Proposition~\ref{lemma_initial_seg}, without loss of generality we can assume that $< \restriction A^*$ is an initial segment of $<$. Let $\omega^*$ be the set $\omega$ with the inverse ordering $<^*$. Iterating $\omega$-many times Construction~\ref{contruction} (cf. also Remark~\ref{remark_number_models}), and using compactness and $\kappa^+$-saturation of $(D^*, <)$ and $(D^*, R_<)$, for every $\eta \in 2^\omega$ we can find $\vec{a}_\eta = (a^\eta_i : i \in \omega^*)$, $(b^\eta_{(i, j)} : i \in \omega^*, j < 4)$, $(c^\eta_{(i, j)} : i \in \omega^*, j < 3)$ and $(\ell^\eta_{(i, j)} : i \in \omega^*, j < 16)$ such that letting $\hat{Z}_\eta$ to be:
	$$Z_0 \cup Z_1 \cup (b^\eta_{(i, j)} : i \in \omega^*, j < 4) \cup (c^\eta_{(i, j)} : i \in \omega^*, j < 3) \cup (\ell^\eta_{(i, j)} : i \in \omega^*, j < 16),$$
we have:
	\begin{enumerate}[(i)]
	\item $cl_<(\vec{a}_\eta \cup \hat{Z}_\eta) = \vec{a}_\eta \cup \hat{Z}_\eta$;
	\item for every $i \in \omega^* - \{ 0 \}$ we have that $a^\eta_{i-1}$ is HF-constructed from $Z_{\eta(i-1)} \cup \{ a^\eta_{i} \}$ as in Construction \ref{contruction} (cf. also Remark \ref{remark_number_models}), where the $b_j$ and $c_j$ there are the $b^\eta_{(i, j)}$ and $c^\eta_{(i, j)}$ here, and the $\ell'_j$ there are the $\ell^\eta_{(i, j)}$ here;
	\item if $i <^* j$, then $a^{\eta}_i$ is $<$-smaller than $a^{\eta}_j$;
	\item any HF-construction in $D^*$ over $Z_0 \cup Z_1$ containing $\vec{a}_\eta$ is such that for every $j <^* 0$ the element $a^{\eta}_j$ is constructed before the element $a^{\eta}_0$.
\end{enumerate}	
Now, by (i) above, we have that $B_\eta =: \vec{a}_\eta \cup \hat{Z}_\eta$ is such that $cl_<(B_{\eta}) = B_{\eta}$ and so, by Lemma \ref{first_lemma_elem_sub}, we have that the smallest projective subplane of $D^*$ containing $B_{\eta}$ is such that $A_{\eta} \preccurlyeq D^*$, and clearly $A_{\eta}$ is countable. Notice now that $B_\eta$ is the least subset of $A_\eta$ such that it contains $\{ a^{\eta}_0 \} \cup Z_0 \cup Z_1$ and from which $A_\eta$ can be HF-constructed. Thus, in $A_\eta$, from $\{ a^{\eta}_0 \} \cup Z_0 \cup Z_1$ the structure $B_\eta$ can be recognized, and from the isomorphism type of $B_\eta$ over $\{ a^{\eta}_0 \} \cup Z_0 \cup Z_1$ the function $\eta$ can be calculated.
Hence, if we let $A^*_\eta$ be what we get from $A_\eta$ by naming $\{ a^{\eta}_0 \} \cup Z_0 \cup Z_1$ by new constants, we have that $A^*_\eta \not\cong A^*_{\xi}$, if $\eta \neq \xi$. But then, since $\{ a^{\eta}_0 \} \cup Z_0 \cup Z_1$ is finite, also the set $\{ A_\eta : \eta \in 2^\omega \}$ contains $2^{\aleph_0}$-many non-isomorphic models.
\end{proof}

%

\section{Stability and Forking}\label{sec_stability}

	\begin{theorem}\label{stability} $T$ is stable.
\end{theorem}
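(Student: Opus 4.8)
The plan is to prove stability by directly counting $1$-types, using the F-construction machinery of Sections~\ref{sec_HF}--\ref{sec_axiomatization} to parametrize $S_1(M)$ explicitly for models $M$. Since it suffices to bound $|S_1(M)|$ for $M \models T$, fix the monster model $\mathfrak{M}$ of $T$ together with an HF-ordering $<$ of $\mathfrak{M}$ over $\emptyset$ (which exists by Proposition~\ref{countable_open_HF_con}), and fix $M \preccurlyeq \mathfrak{M}$ with $\lambda := |M| \geq 2^{\aleph_0}$; alternating L\"owenheim--Skolem with $cl_<$-closure $\omega$-many times (each step preserving size $\lambda$, since $cl_<$-closures of singletons are countable and hence $|cl_<(X)| \leq |X|\cdot\aleph_0$) we may assume $cl_<(M) = M$, and then by Proposition~\ref{lemma_initial_seg} that $M$ is an initial segment of $(\mathfrak{M},<)$.

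First I would show that each $1$-type over $M$ carries a small invariant. Given $a \in \mathfrak{M}$, by (the argument of) Lemma~\ref{lemma_Fcons} there is $B_a$ with $M\cup\{a\}\subseteq B_a\subseteq\mathfrak{M}$, $cl_<(B_a)=B_a$, admitting an F-ordering $<_+$ over $M$ with $B_a=M\cup cl_{<_+}(a)$. Since every element of an F-construction over $M$ has at most two $R_{<_+}$-predecessors, $cl_{<_+}(a)\smallsetminus M$ is countable; and since elements of $M$ have no $R_{<_+}$-predecessors, the interaction of $B_a\smallsetminus M$ with $M$ is just a countable family of $R_{<_+}$-edges landing in $M$. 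Hence the isomorphism type over $M$ of $(B_a,M)$ --- equivalently, the isomorphism type of the digraph $(cl_{<_+}(a),R_{<_+})$ together with the function recording which element of $M$ each $M$-edge uses --- is specified by an element of a set of size at most $2^{\aleph_0}\cdot\lambda^{\aleph_0}=\lambda^{\aleph_0}=\lambda$. Then I would show that this invariant determines $tp(a/M)$: if $a,a'$ give isomorphic data over $M$, one builds $f\in Aut(\mathfrak{M}/M)$ with $f(a)=a'$ by the back-and-forth of the proof of Theorem~\ref{completeness_theorem} (and Lemma~\ref{first_lemma_elem_sub}), now with $M$ in the role of the base $\emptyset$: maintain a partial isomorphism between countable-over-$M$, $cl_<$-closed projective subplanes; at each move absorb a new point by presenting the new part as F-constructible over the current domain via Lemma~\ref{lemma_Fcons}, extend the map via the $R$-preserving embedding of Theorem~\ref{embedding_th}, and stay $cl_<$-closed by Lemma~\ref{embed_F_closed}; start from $id_M$ together with the identification of $cl_<(M\cup\{a\})$ with $cl_<(M\cup\{a'\})$ furnished by the data isomorphism. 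By duality (Observation~\ref{obs_duality}) types of lines are determined in the same way, so $|S_1(M)|\leq\lambda$. As every parameter set of size $\leq\lambda$ lies inside such an $M$, $T$ is $\lambda$-stable; and since $\lambda=2^{\aleph_0}$ (indeed any $\mu^{\aleph_0}$) satisfies $\lambda^{\aleph_0}=\lambda$, $T$ is stable.

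The hard part will be the second half of the argument: showing that the F-construction datum of $a$ over $M$ is a \emph{complete} invariant of $tp(a/M)$. This amounts to relativizing the Ehrenfeucht--Fra\"iss\'e argument of Section~\ref{sec_axiomatization} from the base $\emptyset$ to an arbitrary base model $M$, and checking that Theorem~\ref{embedding_th}, Lemma~\ref{lemma_Fcons} and Lemma~\ref{embed_F_closed} --- all phrased with $\emptyset$ or a countable base in view --- carry over with $M$ as base; a secondary technical point is adapting Lemma~\ref{lemma_Fcons} to the possibly uncountable $cl_<$-closed model $M$. Once these relativizations are in place, the cardinality bookkeeping in the first half is routine, and it is worth noting that the bound $\lambda^{\aleph_0}$ (rather than $\lambda$) is exactly what one expects from Theorem~\ref{unsuperstability}, $T$ being stable but not superstable.
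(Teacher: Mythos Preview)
Your plan is correct and yields the same $\lambda^{\aleph_0}$ bound, but it takes a considerably longer route than the paper's proof. Two differences are worth noting.

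First, the paper does not invoke Lemma~\ref{lemma_Fcons} at all. Working in a $\kappa^+$-homogeneous $B\succcurlyeq A$ with an HF-ordering $<$ having $A$ as an initial segment, the invariant attached to $a\in B$ is simply the isomorphism type over $A$ of the countable set $cl_<(a)$ for the \emph{original} ordering $<$; there is no need to manufacture an auxiliary F-ordering $<_+$ over $M$. This sidesteps exactly the adaptation you flag as the ``secondary technical point''.

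Second, and more importantly, the paper does not redo the Ehrenfeucht--Fra\"iss\'e argument. Suppose $f:cl_<(a)\cong cl_<(b)$ over $A$. Since $A$ is an initial segment, there are no incidences between $A\smallsetminus cl_<(a)$ and $cl_<(a)\smallsetminus A$, so $f$ extends by $id_A$ to an isomorphism $h:A\cup cl_<(a)\to A\cup cl_<(b)$ of partial planes. Now Proposition~\ref{prop_embed_th} gives $\langle A\cup cl_<(a)\rangle_B\cong F(A\cup cl_<(a))$ and $\langle A\cup cl_<(a)\rangle_B\leq_{HF} B$ (and likewise for $b$), so $h$ extends to an isomorphism between two $\leq_{HF}$-substructures of $B$, which are then elementary in $B$ by Lemma~\ref{first_lemma_elem_sub}. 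A single appeal to $\kappa^+$-homogeneity finishes. In other words, the ``hard part'' you identify --- relativizing the back-and-forth of Section~\ref{sec_axiomatization} to base $M$ --- is precisely what Lemma~\ref{first_lemma_elem_sub} has already packaged, and the paper cashes it in directly rather than unpacking it again.

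What your approach buys is self-containment: it would work even without Proposition~\ref{prop_embed_th} in its present form, at the cost of re-verifying that Theorem~\ref{embedding_th}, Lemma~\ref{lemma_Fcons} and Lemma~\ref{embed_F_closed} relativize over an uncountable base. What the paper's approach buys is brevity: one short paragraph, no new relativizations, and no choices in building the invariant (your $B_a$ depends on how Lemma~\ref{lemma_Fcons} is run, which is harmless for counting but slightly obscures the picture).
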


	\begin{proof} Let $\kappa$ be infinite, $A \models T$ with $|A| = \kappa$ and $A \preccurlyeq B$ such that $B$ is $\kappa^+$-homogeneous (with respect to elementary substructures). Let also $<$ be an HF-ordering of $B$ over $\emptyset$ such that $A$ is an initial segment of $<$ (this is possible by Proposition~\ref{countable_open_HF_con} and Corollary~\ref{cor_elem_sbs}). Let now $a, b \in B$, $A_0 = cl_{<}(a)$ and $B_0 = cl_{<}(b)$. Suppose that $A_0 \cap A = B_0 \cap A$ and that there is an isomorphism $f: A_0 \cong B_0$ such that $f \restriction A_0 \cap A = id_{A_0 \cap A}$ and $f(a) = b$. Since $|A_0| \leq \aleph_0$ it suffices to show that there is $g \in Aut(B/A)$ such that $f \subseteq g$. Now, if we let $h: AA_0 \rightarrow AB_0$ be such that $h \restriction A = id_A$ and $f \subseteq h$, then $h$ is an isomorphism. It follows that $h$ extends to an isomorphism $h^*: F(AA_0) \cong  F(AB_0)$ (cf. Definition~\ref{free_extension}). Furthermore, since $cl_<(AA_0) = AA_0$ and $cl_<(AB_0) = AB_0$, by Proposition~\ref{prop_embed_th}:
	$$ F(AA_0) \cong_{AA_0} \langle AA_0 \rangle_B \models T \;\; \text{ and } \;\; F(AB_0) \cong_{AB_0} \langle AB_0 \rangle_B \models T.$$
Also, again by Proposition~\ref{prop_embed_th}, we have:
$$\langle AA_0 \rangle_B \leq_{HF} B \text{ and } \langle AB_0 \rangle_B \leq_{HF} B.$$ 
Hence, by Lemma~\ref{first_lemma_elem_sub} and the $\kappa^+$-homogeneity of $B$, the automorphism $g$ exists.
\end{proof}

	\begin{proof}[Proof of Theorem~\ref{th_strictly_stable}] This is by Theorems~\ref{unsuperstability}, \ref{cont_many_countable_open}~and~\ref{stability}.
\end{proof}

	\begin{lemma} Let $A, B \models T$, $\bar{a} \in A^n$ and $\bar{b} \in B^n$. Then $tp(\bar{a}/\emptyset; A) = tp(\bar{b}/\emptyset; B)$ if and only if there is an isomorphism $f: acl_A(\bar{a}) \rightarrow acl_B(\bar{b})$ such that $f(\bar{a}) = \bar{b}$.
\end{lemma}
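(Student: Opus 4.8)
The plan is to handle the two implications separately. The forward implication is routine; the real content is the converse, where the point is that the \emph{abstract} isomorphism type of $acl(\bar a)$ already determines $tp(\bar a)$. Throughout I use that $T$ is complete (Theorem~\ref{completeness_theorem}), that algebraic closure of a finite tuple is absolute between a structure and its elementary extensions and is countable, and, crucially, Lemma~\ref{lemma_acl}: a model of $T$ is HF-constructible over any of its $acl$-closed subsets.

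\emph{Forward implication.} Fix a monster model $\mathfrak M\models T$ and, using completeness, identify $A$ and $B$ with elementary submodels of $\mathfrak M$. By absoluteness of $acl$ of finite tuples, $acl_A(\bar a)=acl_{\mathfrak M}(\bar a)\subseteq A$ and $acl_B(\bar b)=acl_{\mathfrak M}(\bar b)\subseteq B$. If $tp(\bar a/\emptyset;A)=tp(\bar b/\emptyset;B)$ then $tp(\bar a/\emptyset;\mathfrak M)=tp(\bar b/\emptyset;\mathfrak M)$, so some $\sigma\in Aut(\mathfrak M)$ takes $\bar a$ to $\bar b$, and its restriction is the desired isomorphism $acl_A(\bar a)\to acl_B(\bar b)$ sending $\bar a$ to $\bar b$.

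\emph{Converse, via a transport reduction.} Suppose $f\colon C\to D$ is an isomorphism with $f(\bar a)=\bar b$, where $C:=acl_A(\bar a)$ and $D:=acl_B(\bar b)$; both are countable $acl$-closed subsets of their ambient models. Define $B'$ by \emph{transporting $B$ along $f^{-1}$}: its universe is $C\sqcup(B\setminus D)$, the structure on $C$ and on $B\setminus D$ is inherited from $C$ and from $B$, and for $c\in C$, $x\in B\setminus D$ we declare $c,x$ incident in $B'$ iff $f(c),x$ are incident in $B$. The map $g\colon B\to B'$ equal to $f^{-1}$ on $D$ and to the identity on $B\setminus D$ is an isomorphism; hence $B'\models T$ and $g(\bar b)=\bar a$. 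Moreover $C$ is literally a substructure of $B'$; since $acl_B(D)=D$, Lemma~\ref{lemma_acl} says $B$ is HF-constructible over $D$, so its isomorphic copy $B'$ is HF-constructible over $C$; and since $acl_A(C)=C$, Lemma~\ref{lemma_acl} gives that $A$ is HF-constructible over $C$. I will show
\[
(A,c)_{c\in C}\ \equiv\ (B',c)_{c\in C}.
\]
Granting this, substituting the constants for $\bar a\in C$ into an arbitrary $L$-formula yields $tp(\bar a/\emptyset;A)=tp(\bar a/\emptyset;B')$, which equals $tp(g(\bar b)/\emptyset;B')=tp(\bar b/\emptyset;B)$ via the isomorphism $g$, proving the converse.

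\emph{Elementary equivalence over $C$, and the main obstacle.} If $C$ is non-degenerate then $C\models T$ (it is open, being a subconfiguration of an open plane), so by Lemma~\ref{first_lemma_elem_sub} both $C\preccurlyeq A$ and $C\preccurlyeq B'$; then $(A,C)$ and $(B',C)$ are both elementary extensions of $(C,C)$, hence elementarily equivalent. The hard part is the degenerate case, where one must first build a canonical non-degenerate envelope of $C$ realizable simultaneously on both sides. Fix an HF-ordering of $C$ over $\emptyset$ (Proposition~\ref{countable_open_HF_con}); using Lemma~\ref{lemma_acl} extend it on each side to HF-orderings of $\kappa^{+}$-saturated elementary extensions $A\preccurlyeq A^{\ast}$, $B'\preccurlyeq (B')^{\ast}$ as in Context~\ref{notation_ultraprod}, with $C$ a common initial segment. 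Using Proposition~\ref{prop_embed_th} — which is stated so as to allow a degenerate base — locate inside $A^{\ast}$, respectively $(B')^{\ast}$, a $cl$-closed, countable, non-degenerate projective subplane $C^{+}_{A}$, respectively $C^{+}_{B'}$, isomorphic over $C$ to $F(C\sqcup\{y_1,y_2,y_3,y_4\})$; these two are isomorphic to each other over $C$. Starting the back-and-forth of the proof of Theorem~\ref{completeness_theorem} from this isomorphism — exactly as in the proof of Lemma~\ref{first_lemma_elem_sub}, but with base $C^{+}_{A}$ (resp.\ $C^{+}_{B'}$) in place of a countable elementary submodel — one gets that Player~II wins $EF_\omega((A^{\ast},C_0),((B')^{\ast},C_0))$ for every finite $C_0\subseteq C$; hence $(A^{\ast},C)\equiv((B')^{\ast},C)$, and therefore $(A,C)\equiv(B',C)$. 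Thus the only substantive step is this relativized completeness over a possibly degenerate base $C$, which is a careful but routine adaptation of the machinery of Sections~\ref{sec_axiomatization}--\ref{sec_elem_sbs}.
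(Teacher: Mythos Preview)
Your proof is correct, but it takes a more elaborate route than the paper's. The paper's argument for the converse is three lines: pass without loss of generality to $\omega_1$-saturated $A$ and $B$, use Lemma~\ref{lemma_acl} to get HF-orderings of $A$ and $B$ over $\emptyset$ in which $acl(\bar a)$ and $acl(\bar b)$ are $cl$-closed, and then run the Ehrenfeucht--Fra\"iss\'e game of Theorem~\ref{completeness_theorem} (as in the proof of Lemma~\ref{first_lemma_elem_sub}) directly between $A$ and $B$, starting from $f_{-1}=f$. No transport, no case split on degeneracy.

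Your approach instead transports $B$ along $f^{-1}$ so that $C=acl(\bar a)$ sits literally inside both models, reduces the problem to $(A,C)\equiv(B',C)$, and only then invokes the back-and-forth machinery. The payoff is modularity: in the non-degenerate case you quote Lemma~\ref{first_lemma_elem_sub} as a black box ($C\preccurlyeq A$ and $C\preccurlyeq B'$), and you make explicit that the degenerate case requires a separate envelope construction --- something the paper's terse proof leaves entirely implicit. The cost is the extra transport step and the case split, neither of which is actually needed, since the EF game can simply be started from $f$ between the two saturated models. One small correction: in your degenerate case, the tool for realising $F(C\sqcup\{y_1,\dots,y_4\})$ inside the saturated extension is really the argument behind Theorem~\ref{embedding_th} (and Remark~\ref{F_over_empty}), not Proposition~\ref{prop_embed_th}, which only identifies $\langle B\rangle$ with $F(B)$ once a $cl$-closed $B$ is already in hand.
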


	\begin{proof} The implication ``left-to-right'' is clear. Concerning the other implication, without loss of generality we can assume that $A$ and $B$ are $\omega_1$-saturated. By Lemma \ref{lemma_acl}, $A$ admits an HF-ordering over $\emptyset$ in which $acl(\bar{a})$ is closed, and analogously for $B$ and $\bar{b}$. Then we can play an Ehrenfeucht-Fra\"iss\'e game between $A$ and $B$ as in the proof of Lemma \ref{first_lemma_elem_sub}, starting from $f_{-1} = f$.
\end{proof}
	
	\begin{remark} If $A \subseteq B$,  $B \models T$, and $A$ contains a quadrangle, then $acl_B(A) \models T$.
\end{remark} 

	\begin{lemma}\label{lemma_prime_model} Let $A \subseteq B$ and suppose that $B \models T$ and $A$ contains a quadrangle. Then $acl_B(A)$ is a prime model over $A$.
\end{lemma}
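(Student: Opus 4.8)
The plan is to show that $N := acl_B(A)$ is \emph{constructible} over $A$ in the usual model-theoretic sense (it can be written as $A$ followed by a well-ordered sequence of elements, each of isolated type over $A$ together with the previously listed ones) and then to invoke the classical fact that a constructible model is prime. The first step is to record that $N \models T$: this is exactly the content of the Remark immediately preceding the statement, and it is the only place where the hypothesis that $A$ contains a quadrangle is used (without it $N$ could be degenerate and fail to be a model of $T$). Of course $A \subseteq N$, and since $L$ is countable one has $|N| = |acl_B(A)| \le |A| + \aleph_0$.

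Next I would fix an enumeration $N \setminus A = (c_i : i < \alpha)$ with $\alpha \le |A| + \aleph_0$, set $A_i := A \cup \{ c_j : j < i \}$, and check that each $\mathrm{tp}(c_i/A_i)$ is isolated. This is immediate and theory-independent: since $c_i \in acl_B(A) \subseteq acl_B(A_i)$, the type $\mathrm{tp}(c_i/A_i)$ has only finitely many realizations; choosing a formula in it with the least possible number of realizations among $A_i$-formulas and, if necessary, conjoining finitely many formulas separating $c_i$ from the finitely many remaining realizations, one obtains a formula isolating $\mathrm{tp}(c_i/A_i)$. Hence $(c_i : i < \alpha)$ is a construction of $N$ over $A$.

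Finally I would run the standard Henkin-style argument that turns a construction into a proof of primality: given any $M \models T$ equipped with an embedding of $A$ realizing $\mathrm{tp}_B(A/\emptyset)$ (this is the correct reading of ``model over $A$'', $T$ being complete), build an increasing chain of partial elementary maps $h_i : A_i \to M$ with $h_0 = \mathrm{id}_A$, taking unions at limits, and at successor stages sending $c_i$ to a realization in $M$ of the type isolated by $\varphi_i(x, h_i(\bar d))$, where $\varphi_i(x,\bar d)$ isolates $\mathrm{tp}(c_i/A_i)$ — such a realization exists because $h_i$ is elementary, and the extension stays elementary because $\varphi_i$ isolates the type. The union $h_\alpha : N \to M$ is then the desired elementary embedding over $A$, so $N = acl_B(A)$ is prime over $A$.

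I expect no genuine obstacle here; the argument is soft and all of the geometry is absorbed into the single input ``$acl_B(A) \models T$''. The only points that require a little care are the standard generalities (algebraic types over a set of parameters are always isolated; isolated types are realized and pin down partial elementary maps) and the convention that ``prime over $A$'' must be understood relative to models realizing $\mathrm{tp}(A/\emptyset)$, so that the base case $h_0 = \mathrm{id}_A$ is genuinely elementary — this last point is where completeness of $T$ (the Main Theorem) enters.
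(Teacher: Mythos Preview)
Your argument is correct and takes a genuinely different route from the paper's. You exploit the soft, theory-independent fact that algebraic types are always isolated, so that $acl_B(A)$ is automatically atomic (indeed constructible) over $A$; once the Remark supplies $acl_B(A)\models T$, primality follows by the standard Henkin construction you describe. The paper instead leans on the HF-machinery developed earlier: given an elementary $f:A\to C$, it passes to a $|B|^+$-saturated $D\succcurlyeq C$, extends $f$ to an elementary $g:B\to D$, observes that $g$ carries $acl_B(A)$ onto $acl_D(f(A))=acl_C(f(A))$, and then invokes Lemma~\ref{lemma_acl} (algebraically closed $\Rightarrow$ the ambient model is HF-constructible over it) together with Lemma~\ref{first_lemma_elem_sub} (HF-extension $\Rightarrow$ elementary) to conclude $acl_C(f(A))\preccurlyeq C$. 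Your approach is shorter and fully general --- it shows that $acl(A)$ is prime over $A$ in any complete theory whenever it happens to be a model --- while the paper's approach keeps the proof inside the HF framework that drives the rest of the paper and, as a small bonus, explicitly identifies the image of the prime model inside an arbitrary $C$ as $acl_C(f(A))$.
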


	\begin{proof} Suppose that $f:A \rightarrow C$ is an elementary embedding, and let $C \preccurlyeq D$ be $|B|^+$-saturated. Then there is an elementary embedding $g: B \rightarrow D$ such that $f \subseteq g$. Then, $g \restriction acl_B(A)$ is an isomorphism onto $acl_D(f(A)) = acl_C(f(A))$ and, by Lemmas \ref{lemma_acl} and \ref{first_lemma_elem_sub},  $acl_C(f(A)) \preccurlyeq C$.
\end{proof}

	\begin{remark} Let $\mathfrak{M}$ be the monster model of $T$. Notice that given $A \subseteq B \subseteq \mathfrak{M}$, by Lemma \ref{lemma_acl}, we have that $acl_\mathfrak{M}(A) \leq_{HF} \mathfrak{M}$ and $acl_\mathfrak{M}(B) \leq_{HF} \mathfrak{M}$, and so, by Remark~\ref{remark_induced_HF}, we have that $acl_\mathfrak{M}(A) \leq_{HF} acl_\mathfrak{M}(B)$. This is relevant \mbox{for \ref{prop_pre_forking} and \ref{th_pre_forking}.}
\end{remark}

	\begin{convention} From now on, in this section, we will let $acl(A) = acl_{\mathfrak{M}}(A)$, where $\mathfrak{M}$ is the monster model of $T$, and $A \subseteq \mathfrak{M}$.
\end{convention}

	Recall the definition of $B \oplus_A C$ from Definition~\ref{def_canonical_amalgam} and recall Proposition~\ref{remark_can_amalgam}.

	\begin{proposition}\label{prop_pre_forking} Let $\mathfrak{M}$ be the monster model of $T$, $A, B, C \subseteq \mathfrak{M}$, and suppose that $acl(AC)$ contains a quadrangle. Then we can find $B' \subseteq \mathfrak{M}$ such that $acl(B'A)$ is isomorphic to $acl(BA)$ over $acl(A)$ and $acl(AB'C) \cong acl(AB') \oplus_{acl(A)} acl(AC)$.
\end{proposition}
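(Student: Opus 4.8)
The plan is to realise the canonical amalgam $acl(AB)\oplus_{acl(A)}acl(AC)$ abstractly as a model of $T$, to note that $acl(AC)$ sits inside it as an elementary submodel, and then to copy it back into $\mathfrak{M}$ over $acl(AC)$ using the universality of the monster; the desired $B'$ will be the image of $B$ under this copy.

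Write $a_0=acl(A)$, $b_0=acl(AB)$, $c_0=acl(AC)$. By Lemma~\ref{lemma_acl} together with Remark~\ref{remark_induced_HF} (as in the remark preceding the present section's convention) we have $a_0\leq_{HF}b_0$, $a_0\leq_{HF}c_0$ and $c_0\leq_{HF}\mathfrak{M}$; since $c_0$ is algebraically closed and, by hypothesis, contains a quadrangle, $c_0\models T$, so Lemma~\ref{first_lemma_elem_sub} gives $c_0\preccurlyeq\mathfrak{M}$. Fix an isomorphic copy $\tilde b_0$ of $b_0$ over $a_0$, disjoint from $\mathfrak{M}$ apart from $a_0$, and set $E:=\tilde b_0\oplus_{a_0}c_0=F(\tilde b_0\otimes_{a_0}c_0)$ (Definitions~\ref{free_relational_amalgam} and~\ref{def_canonical_amalgam}; note that $a_0$ is a closed subplane of $\tilde b_0$ and of $c_0$, being algebraically closed). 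Since $\tilde b_0\otimes_{a_0}c_0$ contains the quadrangle of $c_0$, Remark~\ref{remark_can_amalgam} yields $E\models T$ and $a_0\leq_{HF}E$; moreover $c_0\leq_{HF}\tilde b_0\otimes_{a_0}c_0\leq_{HF}E$ and $\tilde b_0\leq_{HF}\tilde b_0\otimes_{a_0}c_0\leq_{HF}E$ (first adding the other free side, then the free extension). Hence by Lemma~\ref{first_lemma_elem_sub} $c_0\preccurlyeq E$, and — provided $b_0$, hence $\tilde b_0$, contains a quadrangle — also $\tilde b_0\preccurlyeq E$.

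Now, as $E\models T$, $c_0\preccurlyeq E$ and $c_0\preccurlyeq\mathfrak{M}$, the universality and strong homogeneity of $\mathfrak{M}$ provide an elementary embedding $h\colon E\to\mathfrak{M}$ with $h\restriction c_0=\mathrm{id}_{c_0}$. Put $B':=h(B_0)$, where $B_0\subseteq\tilde b_0$ is the copy of $B$. Since $h$ is elementary we have $h(acl_E(Y))=acl_{\mathfrak{M}}(h(Y))$ for every $Y\subseteq E$, and $h$ fixes $A$ and $C$ (both inside $c_0$); thus $acl(AB')=h(acl_E(AB_0))$ and $acl(AB'C)=h(acl_E(AB_0C))$. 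Using $\tilde b_0\preccurlyeq E$ and $c_0\preccurlyeq E$ one gets $acl_E(AB_0)=\tilde b_0$ and $acl_E(AC)=c_0$; and since every element of $E=F(\tilde b_0\otimes_{a_0}c_0)$ is obtained by finitely many joins of points and meets of lines from $\tilde b_0\otimes_{a_0}c_0$, each such element lies in the definable closure of two previously constructed ones, so $E\subseteq acl_E(\tilde b_0\otimes_{a_0}c_0)\subseteq acl_E(AB_0C)$, i.e.\ $acl_E(AB_0C)=E$. Therefore $acl(AB')=h(\tilde b_0)$, which is isomorphic to $\tilde b_0\cong b_0$ over $a_0=acl(A)$; and $acl(AB'C)=h(E)\cong F\big(h(\tilde b_0)\otimes_{h(a_0)}h(c_0)\big)=F\big(acl(AB')\otimes_{acl(A)}acl(AC)\big)=acl(AB')\oplus_{acl(A)}acl(AC)$, where we used that $h$ is injective, so that $h(\tilde b_0)\cap h(c_0)=h(a_0)=acl(A)$ and $h$ carries the free relational amalgam to the free relational amalgam, and that $F$ is preserved by isomorphisms of partial planes.

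The one point requiring extra care — and the step I expect to be the main obstacle — is the claim $\tilde b_0\preccurlyeq E$ (and, symmetrically, $b_0\preccurlyeq\mathfrak{M}$), which as argued above needs $acl(AB)$ to contain a quadrangle so that Lemma~\ref{first_lemma_elem_sub} applies; phrased invariantly, one must know that the fresh copy of $acl(AB)$ amalgamated into $E$ stays algebraically closed there. If $acl(AB)$ contains a quadrangle the argument is complete as written. Otherwise $acl(AB)$ is a degenerate closed subconfiguration (necessarily of a very restricted shape), $acl(AB')$ will be a degenerate closed subconfiguration of the same isomorphism type, and one either checks the finitely many possible cases by hand or else enlarges $A$ by a quadrangle drawn from $acl(AC)$ — which leaves $acl(AC)$ unchanged and reduces to the non-degenerate case. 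All remaining verifications are routine bookkeeping with $\leq_{HF}$, $cl_{<}$ and $acl$.
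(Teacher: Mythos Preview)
Your construction is exactly the intended one, and you have supplied far more than the paper does: its entire proof is the sentence ``This is clear.'' Building the abstract canonical amalgam $E=\tilde b_0\oplus_{a_0}c_0$, observing $c_0\preccurlyeq E$ via Lemma~\ref{first_lemma_elem_sub}, and then embedding $E$ into $\mathfrak{M}$ over $c_0$ is precisely the argument the authors have in mind, and your bookkeeping with $acl_E$ under the elementary embedding $h$ is correct.

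The one point that deserves comment is your second proposed fix for the degenerate case. Enlarging $A$ to $A'=A\cup Q$ with $Q\subseteq acl(AC)$ does leave $acl(A'C)=acl(AC)$, but it changes both $acl(A)$ and $acl(AB)$: applying the non-degenerate case to $A',B,C$ yields a $B'$ with $acl(A'B')\cong acl(A'B)$ over $acl(A')$ and $acl(A'B'C)\cong acl(A'B')\oplus_{acl(A')}acl(A'C)$, and it is not clear how to descend from this to the desired conclusion about $acl(A)$ and $acl(AB)$. So this reduction does not work as stated. Your first fix (a finite case analysis of degenerate closed configurations) is feasible. Cleaner still: the equality $acl_E(\tilde b_0)=\tilde b_0$ can be obtained without $\tilde b_0\preccurlyeq E$ by the same homogeneity argument used in the proof of Theorem~\ref{stability} --- one has $\tilde b_0\leq_{HF}E$ and $\langle\tilde b_0\rangle_E=\tilde b_0$ (this is Proposition~\ref{remark_acl}, proved just below and independent of the present proposition), and the inclusion $\tilde b_0\subseteq acl_E(AB_0)$ follows because the isomorphism $b_0\cong h(\tilde b_0)$ over $a_0$ between two $\leq_{HF}$-strong closed subconfigurations of $\mathfrak{M}$ extends to an automorphism of $\mathfrak{M}$. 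Finally, note that the only place the paper invokes this proposition is Corollary~\ref{cor_pre_forking}, where $A$ itself is assumed to contain a quadrangle, so the degenerate case is in any event irrelevant to the paper's applications.
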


	\begin{proof} This is clear.
\end{proof}

	\begin{lemma}\label{th_pre_forking} Let $\mathfrak{M}$ be the monster model of $T$, $A, B, C \subseteq \mathfrak{M}$, and suppose that $AB$ contains a quadrangle. If  $acl(ABC)$ is the {\em canonical amalgam} (cf. Def.~\ref{def_canonical_amalgam}) of $acl(AB)$ and $acl(AC)$ over $acl(A)$, then $C \pureindep[A] B$ (in the forking sense).
\end{lemma}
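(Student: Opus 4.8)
The plan is to establish forking independence via the standard route for stable theories: show that the type $\mathrm{tp}(C/\mathrm{acl}(AB))$ does not fork over $A$ by exhibiting its realization as a non-forking extension of $\mathrm{tp}(C/\mathrm{acl}(A))$, and the cleanest way to do this here is to verify that the canonical amalgam configuration is invariant enough to witness \emph{finite satisfiability}/\emph{definability}, or — perhaps more naturally given the tools already developed — to use the characterization of non-forking via an independence relation satisfying the axioms of forking (existence, symmetry, transitivity, finite character, local character, stationarity over algebraically closed sets). Concretely, define $C \mathrel{\pureindep[A]} B$ to hold whenever $\mathrm{acl}(ABC) \cong \mathrm{acl}(AB) \oplus_{\mathrm{acl}(A)} \mathrm{acl}(AC)$ over $\mathrm{acl}(AB) \cup \mathrm{acl}(AC)$; the lemma is then the assertion that this relation implies non-forking, and by the uniqueness of forking in a stable theory it suffices to check that this relation satisfies the defining properties of an independence relation, or at least enough of them (invariance, existence over models via Proposition~\ref{prop_pre_forking}, and \emph{stationarity}) to force the inclusion into non-forking.

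First I would reduce to the algebraically closed case: since $AB$ contains a quadrangle, $\mathrm{acl}(AB)$, $\mathrm{acl}(AC)$ and $\mathrm{acl}(A)$ are all models of $T$ by the Remark preceding the lemma, and by the final Remark of the section $\mathrm{acl}(A) \leq_{HF} \mathrm{acl}(AB)$ and $\mathrm{acl}(A) \leq_{HF} \mathrm{acl}(AC)$, so the hypothesis of Definition~\ref{def_canonical_amalgam} is met and the canonical amalgam $\mathrm{acl}(AB) \oplus_{\mathrm{acl}(A)} \mathrm{acl}(AC)$ makes sense and is a model of $T$ by Remark~\ref{remark_can_amalgam}(2). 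Next, using Proposition~\ref{prop_pre_forking}, produce a conjugate $C'$ of $C$ over $\mathrm{acl}(A)$ (more precisely a $B'$-side conjugate, then symmetrize) such that $\mathrm{acl}(AB'C) \cong \mathrm{acl}(AB') \oplus_{\mathrm{acl}(A)} \mathrm{acl}(AC)$; the point is that \emph{any} two realizations of $\mathrm{tp}(C/\mathrm{acl}(A))$ that sit in canonical-amalgam position over $\mathrm{acl}(AB)$ realize the \emph{same} type over $\mathrm{acl}(AB)$ — this is the stationarity statement, and it follows from the Lemma stated just after Theorem~\ref{stability} (two tuples with isomorphic algebraic closures over $\emptyset$, here over $\mathrm{acl}(A)$, have the same type) together with the observation that the isomorphism type of $F(B \otimes_A C)$ over $B \cup C$ is completely determined by the isomorphism types of $B$, $C$ over $A$. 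Finally, invoke the standard fact that in a stable theory an invariant relation that is implied by equality of types over models, satisfies existence, and is stationary over algebraically closed sets must be contained in non-forking (indeed must coincide with it); since $T$ is stable by Theorem~\ref{stability}, this yields $C \mathrel{\pureindep[A]} B$ in the forking sense.

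The main obstacle, I expect, is \textbf{stationarity}: showing that the canonical amalgam position pins down a \emph{unique} type over $\mathrm{acl}(AB)$, i.e. that if $C_1, C_2$ both realize $\mathrm{tp}(C/\mathrm{acl}(A))$ and both $\mathrm{acl}(ABC_i)$ are isomorphic to the canonical amalgam over $\mathrm{acl}(AB) \cup \mathrm{acl}(AC_i)$, then there is an automorphism of $\mathfrak{M}$ fixing $\mathrm{acl}(AB)$ pointwise and sending $C_1$ to $C_2$. The heart of this is that the free relational amalgam $\mathrm{acl}(AB) \otimes_{\mathrm{acl}(A)} \mathrm{acl}(AC)$ is determined as an $L$-structure by its two factors glued along $\mathrm{acl}(A)$, and that passing to $F(-)$ is functorial in the relevant sense, so an isomorphism $\mathrm{acl}(AC_1) \cong \mathrm{acl}(AC_2)$ over $\mathrm{acl}(A)$ extends canonically to an isomorphism of the two amalgams fixing $\mathrm{acl}(AB)$; one then has to upgrade this isomorphism of algebraically closed submodels to an automorphism of $\mathfrak{M}$, which is exactly where one appeals again to Lemma~\ref{first_lemma_elem_sub} (the amalgam is $\leq_{HF}$ inside $\mathfrak{M}$, hence elementary) plus the homogeneity of the monster. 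A secondary technical point is ensuring the quadrangle hypotheses propagate correctly so that all the ``$\models T$'' and HF-closure facts are available at each step; this is bookkeeping but needs care, and may require passing from $A$ to $\mathrm{acl}(A)$ silently throughout, justified by the fact that forking over $A$ and over $\mathrm{acl}(A)$ agree.
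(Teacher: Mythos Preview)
Your stationarity argument is sound, and the overall strategy---verify enough axioms for the canonical-amalgam relation and then invoke an abstract characterisation of forking---can be made to work, but the ``standard fact'' you invoke is false as stated: invariance, existence, and stationarity over algebraically closed sets do \emph{not} by themselves force a relation to be contained in non-forking. A toy counterexample already lives in the theory of an infinite set: over $A=\emptyset$, declare an element $c$ to be $\pureindep^*$-free from $B$ iff $c\in B$ when $|B|=1$ and $c\notin B$ otherwise; this is invariant and satisfies existence and stationarity, yet $b\pureindep^*_\emptyset\{b\}$ while $b\not\pureindep^f_\emptyset\{b\}$. What is missing is at least \emph{monotonicity}: with it, the $\pureindep^*$-free extensions of $\mathrm{tp}(C/\mathrm{acl}(A))$ to the various $\mathrm{acl}(AB)$ cohere into a single global $\mathrm{Aut}(\mathfrak{M}/\mathrm{acl}(A))$-invariant type, and \emph{then} stability identifies it with the non-forking extension. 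Monotonicity for the canonical-amalgam relation is true but needs an argument (that $\langle \mathrm{acl}(AB')\cup\mathrm{acl}(AC)\rangle$ inside $\mathrm{acl}(AB)\oplus_{\mathrm{acl}(A)}\mathrm{acl}(AC)$ is itself the smaller canonical amalgam and is $\leq_{HF}$, hence algebraically closed by Proposition~\ref{remark_acl}); you have not supplied it, and without it the final inference does not go through.

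The paper takes a different and more direct route, arguing by contradiction. Assuming $C \not\pureindep[A] B$, it picks a long Morley sequence $(B_i)_{i<\kappa}$ in $\mathrm{tp}(B/A)$, moves $C$ over $A$ so that $C\cap D=A$ where $D:=\mathrm{acl}(\bigcup_i B_i)$, and forms the single canonical amalgam $E:=D\oplus_{A} C$. Since $D\preccurlyeq E$, the $(B_i)$ remain a Morley sequence inside $E$; and for each $i$ one writes down an explicit isomorphism $\mathrm{acl}(CB)\cong_C \langle CB_i\rangle_E$ (both being $C\oplus_A B_i$), so $\mathrm{tp}(CB_i/A)=\mathrm{tp}(CB/A)$ and hence $C\not\pureindep[A] B_i$ for every $i$. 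Now one uses the forking calculus that is \emph{already available}: from $B_i\pureindep[A] B_{<i}$ and $C\not\pureindep[A] B_i$, symmetry and transitivity of forking give $C\not\pureindep[B_{<i}] B_i$ for all $i<\kappa$, an unbounded forking chain contradicting stability. Your stationarity observation is essentially the content of the isomorphism step $(\star_2)$ in this argument, but the paper feeds it into a contradiction with local character of forking rather than into an abstract uniqueness theorem, and so never needs to verify further axioms for the canonical-amalgam relation.
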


	\begin{proof} First of all, without loss of generality, we can assume that $A = acl(A)$,  $B = acl(AB)$, and $C = acl(AC)$. Suppose now that $acl(BC) = C \oplus_A B$ (cf. Definition~\ref{def_canonical_amalgam}) and that $B$ contains a quadrangle. We need to show that $C \pureindep[A] B$. For the sake of contradiction, suppose that $C \not\!\pureindep[A] B$. Choose $(B_i : i < \kappa)$, for $\kappa$ large enough, such that:
\begin{enumerate}[(a)]
	\item $B_0 = B$;
	\item $(B_i: i < \kappa)$ is a Morley sequence over $A$.
\end{enumerate}
Let $C'$ be a copy of $C$ over $A$ such that $C' \cap acl(\bigcup_{i < \kappa} B_i) = A$, then $C' \not\!\pureindep[A] B$ and so without loss of generality we can assume that $C' = C$. Let $D:= acl(\bigcup_{i < \kappa} B_i) $ and notice that by Lemmas \ref{first_lemma_elem_sub} and \ref{lemma_acl} we have that $D \preccurlyeq \mathfrak{M}$. Notice also that $A \leq_{HF} C$, $A \leq_{HF} D$, and $D \cap C = A$, and so we can consider the canonical amalgam $E:= D \oplus_A C \;\; \text{ (cf. Definition \ref{def_canonical_amalgam})}$. Observe now that: 
\begin{enumerate}[$(\star_1)$]
\item $(B_i : i < \kappa)$ is a Morley sequence over $A$ in the model $E \models T$.
\end{enumerate}
[Why? $D \leq_{HF} E$, since $A \leq_{HF} C$. Thus, as $D, E \models T$, by Lemma~\ref{first_lemma_elem_sub}, $D \preccurlyeq E$.]
\begin{enumerate}[$(\star_2)$]
\item In the model $E \models T$ we have that $C \not\!\pureindep[A] B_i$, for every $i < \kappa$.
\end{enumerate}
[Why? Since $B_i = acl(B_i) \preccurlyeq \mathfrak{M}$ and $D \preccurlyeq \mathfrak{M}$, we have that $B_i \preccurlyeq D$. And so, by Corollary~\ref{cor_elem_sbs}, $B_i \leq_{HF} D$. Hence, $\langle B_iC \rangle_E \leq_{HF} E$ (cf. Definition~\ref{subplane}), and so $\langle B_iC \rangle_E \preccurlyeq E$ (by Lemma~\ref{first_lemma_elem_sub}). Furthermore, clearly $\langle CB_i \rangle_E \cong C \oplus_A B_i$, and so there is an isomorphism $f_i: acl(CB) \cong_C \langle CB_i \rangle_E$ such that $f_i(B) = B_i$.]
\newline Hence, since $(B_i: i < \kappa)$ is Morley over $A$ in $E$, for every $i < \kappa$, we have that in $E$:
$$C \not\!\!\pureindep[B_{<i}] B_i,$$
where $B_{<i} = \bigcup_{j < i} B_j$. This contradicts the stability of $T$.
\end{proof}

	\begin{corollary}\label{cor_pre_forking} Let $\mathfrak{M}$ be the monster model of $T$, and $A, B, C \subseteq \mathfrak{M}$ and suppose that $A$ contains a quadrangle. Then $B \pureindep[A] C$ (in the forking sense) iff  $acl(ABC)$ is the {\em canonical amalgam} of $acl(AB)$ and $acl(AC)$ over $acl(A)$.
\end{corollary}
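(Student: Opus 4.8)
The plan is to split the biconditional and feed each direction into results already available, the non-trivial inputs being Proposition~\ref{prop_pre_forking}, Lemma~\ref{th_pre_forking}, and the stability of $T$.

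\emph{The easy direction.} Assume $acl(ABC)$ is the canonical amalgam of $acl(AB)$ and $acl(AC)$ over $acl(A)$. Since $A$ contains a quadrangle, so does $AB$, so Lemma~\ref{th_pre_forking} applies verbatim and gives $C \pureindep[A] B$; by symmetry of forking in the stable theory $T$ (Theorem~\ref{stability}) this yields $B \pureindep[A] C$.

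\emph{The main direction.} Now assume $B \pureindep[A] C$. First I would check that the right-hand side even makes sense: $acl(A)$ is a closed subplane of and $\leq_{HF}$-below both $acl(AB)$ and $acl(AC)$ (by the remark preceding Proposition~\ref{prop_pre_forking}), and $acl(AB) \cap acl(AC) = acl(A)$ since $B \pureindep[A] C$ in a stable theory. As $A$ — hence $acl(AC)$ — contains a quadrangle, Proposition~\ref{prop_pre_forking} furnishes $B' \subseteq \mathfrak{M}$ with an isomorphism $f \colon acl(AB') \to acl(AB)$ fixing $acl(A)$ pointwise and with $acl(AB'C) \cong acl(AB') \oplus_{acl(A)} acl(AC)$, i.e.\ $acl(AB'C)$ is the canonical amalgam of $acl(AB')$ and $acl(AC)$ over $acl(A)$. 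Applying the easy direction to the triple $(A, B', C)$ (again $A$, hence $AB'$, contains a quadrangle) gives $B' \pureindep[A] C$. Now fix enumerations $\bar b$ of $acl(AB)$ and $\bar b'$ of $acl(AB')$ with $f(\bar b') = \bar b$; then $f$ witnesses $tp(\bar b/acl(A)) = tp(\bar b'/acl(A))$. This type is over an algebraically closed set, hence stationary, and both $\bar b$ and $\bar b'$ are forking-independent from $acl(AC)$ over $acl(A)$ (forking depends only on algebraic closures). By uniqueness of the non-forking extension of a stationary type, $tp(\bar b/acl(AC)) = tp(\bar b'/acl(AC))$, so there is $\sigma \in Aut(\mathfrak{M}/acl(AC))$ with $\sigma(\bar b') = \bar b$. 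Then $\sigma$ restricts to an isomorphism $acl(AB'C) \to acl(ABC)$ fixing $acl(AC)$ pointwise and carrying $acl(AB')$ onto $acl(AB)$, thereby transporting the canonical-amalgam structure; hence $acl(ABC)$ is the canonical amalgam of $acl(AB)$ and $acl(AC)$ over $acl(A)$.

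\emph{The main obstacle.} The delicate point is the main direction: one must produce an independent copy $B'$ of $B$ over $A$ whose amalgam with $C$ is the canonical one (Proposition~\ref{prop_pre_forking}) and then deploy the full stable-theory package — stationarity of types over algebraically closed sets together with uniqueness of their non-forking extensions — to move $B'$ back onto $B$ over $acl(AC)$; along the way one must also keep track of the various ``contains a quadrangle'' hypotheses needed to invoke Lemma~\ref{th_pre_forking} and Proposition~\ref{prop_pre_forking}, and verify that the canonical amalgam on the right-hand side is well-defined.
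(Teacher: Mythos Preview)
Your proposal is correct and follows essentially the same route as the paper: the ``right-to-left'' direction is Lemma~\ref{th_pre_forking}, and for ``left-to-right'' one uses Proposition~\ref{prop_pre_forking} to produce a copy $B'$ in canonical-amalgam position, applies the first direction to it, and then invokes stationarity of $tp(acl(AB)/acl(A))$ to transport the structure back. One small point of justification: your line ``over an algebraically closed set, hence stationary'' is not automatic in an arbitrary stable theory (one needs $acl^{eq}$-closure in general); the paper instead observes that, since $A$ contains a quadrangle, $acl(A)$ is a model of $T$ and hence $acl(A)\preccurlyeq\mathfrak{M}$, which gives stationarity directly.
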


	\begin{proof} Since $B \pureindep[A] C$ iff $acl(B) \pureindep[acl(A)] acl(C)$, we may assume that $A = acl(A)$,  $B = acl(AB)$, and $C = acl(AC)$. Then the implication ``right-to-left'' is immediate from Lemma~\ref{th_pre_forking}, and the implication ``left-to-right'' follows from the implication ``right-to-left'', Prop.~\ref{prop_pre_forking} and the stationarity of $tp(B/A)$ (notice that $A \preccurlyeq \mathfrak{M}$).
\end{proof}

	\begin{proposition}\label{remark_acl} If $A \leq_{HF} B \models T$ and $\langle A \rangle_B = A$, then $acl_B(A) = A$.
\end{proposition}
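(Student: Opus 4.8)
The plan is to prove the two inclusions $A\subseteq acl_B(A)$ and $acl_B(A)\subseteq A$ separately. The first is immediate (each $a\in A$ is the unique solution of $x=a$), so the whole content is in the second; the natural strategy is that whenever $A$ is itself a model of $T$ one gets $A\preccurlyeq B$ for free, and when $A$ is degenerate one reduces to that case inside a monster model.

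\emph{The case where $A$ contains a quadrangle.} Since $\langle A\rangle_B=A$, $A$ is a closed subconfiguration of $B$, and as it contains a quadrangle it is, by Definition~\ref{subplane} and Remark~\ref{remark_nondeg}, a non-degenerate projective subplane of $B$. Being a subconfiguration of the open plane $B$, every finite subconfiguration of $A$ is a finite subconfiguration of $B$, so $A$ is open, i.e. $A\models T$. As moreover $A\leq_{HF}B$ and $B\models T$, Lemma~\ref{first_lemma_elem_sub} yields $A\preccurlyeq B$. Now the desired equality is the standard consequence of elementarity: if $c\in acl_B(A)$, choose $\varphi(x,\bar a)$ with $\bar a$ in $A$, $B\models\varphi(c,\bar a)$ and $|\varphi(B,\bar a)|=n<\omega$; then $B\models\exists^{=n}x\,\varphi(x,\bar a)$, hence $A\models\exists^{=n}x\,\varphi(x,\bar a)$, and the $n$ witnesses $c_1,\dots,c_n\in A$ also satisfy $B\models\varphi(c_j,\bar a)$, so they exhaust $\varphi(B,\bar a)$; thus $c\in\{c_1,\dots,c_n\}\subseteq A$.

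\emph{The general case.} Now $A=\langle A\rangle_B$ may fail to contain a quadrangle (in particular $A=\emptyset$ is allowed), so $A\not\models T$ and the above does not apply to $A$ directly. It suffices to show that no $b\in B-A$ belongs to $acl_B(A)$. Fix a monster model $\mathfrak M$ of $T$ with $B\preccurlyeq\mathfrak M$; then $acl_B(A)=acl_{\mathfrak M}(A)\cap B$, and since $B\leq_{HF}\mathfrak M$ by Corollary~\ref{cor_elem_sbs}, transitivity of $\leq_{HF}$ (concatenation of HF-orderings) gives $A\leq_{HF}\mathfrak M$. Fix an HF-ordering $<$ of $\mathfrak M$ over $A$ having $B$ as an initial segment. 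Using the embedding machinery of Section~\ref{sec_HF} — Theorem~\ref{embedding_th}, Lemma~\ref{embed_F_closed} and Proposition~\ref{lemma_initial_seg} — I would produce inside $\mathfrak M$ a ``freely added'' closed subconfiguration $A'$, lying $<$-strictly above all of $B$ except for the common base $A$, isomorphic over $A$ to $F(A\cup\{x_i:i<\omega\})$ for fresh points $x_i$, such that $A'$ is $cl_<$-closed (hence $A\leq_{HF}A'\leq_{HF}\mathfrak M$ by Proposition~\ref{lemma_initial_seg}), contains the quadrangle $\{x_0,x_1,x_2,x_3\}$, and satisfies $A'\cap B=A$. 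Then $A'$ is a closed subconfiguration of the open plane $\mathfrak M$ with a quadrangle, so $A'\models T$, and the previous case applied to $A'\leq_{HF}\mathfrak M$ gives $acl_{\mathfrak M}(A')=A'$. Hence $b\in acl_B(A)\subseteq acl_{\mathfrak M}(A)\subseteq acl_{\mathfrak M}(A')=A'$, so $b\in A'\cap B=A$, contradicting $b\notin A$.

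The step I expect to be the main obstacle is the construction of $A'$ in the general case: one must adjoin a ``generic quadrangle above $B$'' to $A$ inside $\mathfrak M$ while keeping the resulting closed configuration $cl_<$-closed (so that $A'\leq_{HF}\mathfrak M$, and therefore $A'\preccurlyeq\mathfrak M$ by Lemma~\ref{first_lemma_elem_sub}) and disjoint from $B$ over $A$. Since Theorem~\ref{embedding_th} is stated with the base assumed to be a projective subplane, for degenerate $A$ one should realize $F(A\cup\{x_i:i<\omega\})$ abstractly, embed it into $\mathfrak M$ over $A$ above $B$, and invoke Lemma~\ref{embed_F_closed} to control $cl_<$-closedness of the image. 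If only the situation relevant to the applications is needed (where $A$ already contains a quadrangle), the first case suffices.
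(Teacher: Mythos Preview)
Your Case~1 is correct and pleasantly direct: once $A$ contains a quadrangle, $A\models T$ and Lemma~\ref{first_lemma_elem_sub} gives $A\preccurlyeq B$, whence $acl_B(A)=A$ immediately. This is actually cleaner than the paper's argument restricted to that case.

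Your Case~2, however, is only a sketch, and the route you propose through Theorem~\ref{embedding_th} runs into exactly the obstacle you flag: that theorem assumes the base is a projective subplane, and its proof genuinely uses a line of $A$ and auxiliary points off of it, so it does not apply over a degenerate $A$ without rework. Your suggested workaround (``realize $F(A\cup\{x_i\})$ abstractly and embed it above $B$'') would need its own argument, and you have not supplied one.

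The paper avoids this case split entirely by a uniform and much more elementary device: for each $n$ it forms the canonical amalgam $D_n=B_0\oplus_A B_1\oplus_A\cdots\oplus_A B_{n-1}$ of $n$ isomorphic copies of $B$ over $A$. Since $B\leq_{HF}D_n\models T$ (the quadrangle comes from $B$, not from $A$), Lemma~\ref{first_lemma_elem_sub} gives $B\preccurlyeq D_n$, hence $acl_B(A)=acl_{D_n}(A)$; and the images $b_i$ of $b$ in the $B_i$ are $n$ pairwise distinct elements all realizing $tp(b/A)$ in $D_n$. Letting $n$ grow contradicts $b\in acl_B(A)$. Note this needs no embedding into a saturated model, no HF-order on a monster, and no hypothesis on $A$ beyond $\langle A\rangle_B=A$ and $A\leq_{HF}B$.

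If you want to rescue your Case~2 with minimal effort, observe that the $A'$ you seek can simply be taken to be a single copy $B_1$ of $B$ over $A$, realized inside the monster via $B\preccurlyeq B\oplus_A B_1\preccurlyeq\mathfrak M$: then $B_1\models T$, $B_1\preccurlyeq\mathfrak M$ (so $B_1\leq_{HF}\mathfrak M$ and $\langle B_1\rangle_{\mathfrak M}=B_1$), and $B_1\cap B=A$. Applying your Case~1 to $B_1$ finishes. But at that point you have essentially reproduced the paper's argument.
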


	\begin{proof} The containment ``right-to-left'' is clear. Concerning the containment ``left-to-right'', suppose that there is $b \in acl_B(A) - A$. Now, for every $n < \omega$, let $(B_i : i < n)$ be such that:
\begin{enumerate}[(1)]
	\item $B_0 = B$;
	\item $B_i \cap B_j = A$, for $i < j < n$;
	\item $f_i: B \cong_A B_i$;
	\item $f_i(b) = b_i$
\end{enumerate} 
Now, let $n < \omega$ and consider the structure:
	$$B_0 \oplus_{A} B_1 \oplus_{A} \cdots \oplus_{A} B_{n-1} := D_n.$$
Then $B \leq_{HF} D_n \models T$ and so $B \preccurlyeq D_n$, from which it follows that $acl_B(A) = acl_{D_n}(A)$. Furthermore, without loss of generality, we can assume that $D_n \preccurlyeq \mathfrak{M}$. Hence, for every $i < n$, $tp(b_i/A) = tp(b/A)$. This leads to a contradiction.
\end{proof}

	\begin{theorem5} Let $\mathfrak{M}$ be the monster model of $T$, and $A, B, C \subseteq \mathfrak{M}$. Then $B \pureindep[A] C$~(in the forking sense) if and only if  $acl(ABC)$ is the {\em canonical amalgam} of $acl(AB)$ and $acl(AC)$ over $acl(A)$.
\end{theorem5}

	\begin{proof} Again we may assume that $A = acl(A)$, $B = acl(AB), $ and $C = acl(AC)$. 
\newline We prove the implication ``right-to-left''. To this extent, suppose that $acl(BC) = B \oplus_A C$, and let $A^+ \preccurlyeq \mathfrak{M}$ be such that $BC \subseteq A^+$ and $D \models T$ be such that $C \leq_{HF} D$ and $D \cap A^+ = C$. Let then $B^+:= A^+ \oplus_C D$, and notice that without loss of generality we may assume that $B^+ \preccurlyeq \mathfrak{M}$ (since $A^+ \preccurlyeq \mathfrak{M}$ and $A^+ \preccurlyeq B^+$, given that $A^+, B^+ \models T$ and $A^+ \leq_{HF} B^+$). Now, $\langle BD \rangle_{B^+} \cong B \oplus_A D$ and $\langle BD \rangle_{B^+} \leq_{HF} B^+$, and thus, by Proposition~\ref{remark_acl}, $acl(BD) = \langle BD \rangle_{B^+}$. Hence, by Corollary~\ref{cor_pre_forking}, $B \pureindep[A] D$, and thus, by Monotonicity,  we conclude that $B \pureindep[A] C$.
\newline We prove the implication ``left-to-right''. Let $A^+ \preccurlyeq \mathfrak{M}$ be such that $BC \subseteq A^+$ and $B^+ \models T$ be such that $A \leq_{HF} B^+ \models T$ and $B^+ \cap A^+ = A$. Let now $C^+ := A^+ \oplus_A B^+$. Since $A^+ \preccurlyeq \mathfrak{M}$ and $A^+ \preccurlyeq C^+$ (given that $A^+ \leq_{HF} C^+$) we can assume without loss of generality that $C^+ \preccurlyeq \mathfrak{M}$. Then, by Corollary~\ref{cor_pre_forking}, $B^+ \pureindep[A] A^+$. Thus, noticing that $C \pureindep[A] B$ and $C \pureindep[B] B^+$, and using Transitivity and Monotonicity, we conclude that $C \pureindep[B^+] B$. Hence, again by Corollary~\ref{cor_pre_forking}, we have $acl(BCB^+) \cong acl(B^+B) \oplus_{B^+} acl(CB^+)$. Also, we have $acl(BB^+) \cong B \oplus_A B^+$
(since $C^+ = A^+ \oplus_A B^+$ and $B \subseteq A^+$) and thus $acl(BB^+) = \langle BB^+ \rangle_{C^+}$. Analogously, we see that $C \oplus_A B^+ \cong acl(CB^+) = \langle CB^+ \rangle_{C^+}$. Hence, $H := \langle BC \rangle_{C^+} \cong B \oplus_A C$, and so, by Proposition~\ref{remark_acl}, it is enough to prove that $H \leq_{HF} acl(BCB^+)$. To this extent, notice that using what we observed above we have that:
$$\begin{array}{rcl}\label{equation}
H & \leq_{HF} & HB^+\\
  & \leq_{HF} & HB^+ \langle B B^+\rangle_{C^+}\\
  & \leq_{HF} & HB^+ \langle B B^+\rangle_{C^+}\langle C B^+\rangle_{C^+} \\
  & \leq_{HF} & HB^+ acl(BB^+) acl(CB^+) \\
  & \leq_{HF} & acl(BCB^+).
\end{array}$$
\end{proof}


	\begin{proposition}\label{no_disjoint_amalgamation} The class of finitely generated open projective planes is not closed under disjoint amalgamation, with respect to either the language $L=  (S_1, S_2, I)$ (considered in this paper) or the expanded language $L^+ =  (S_1, S_2, I, \wedge, \vee)$.
\end{proposition}

	\begin{proof} Let $A$ be a copy of $\pi^4$ generated by the quadrangle $\{ a, b, c, d \}$ and let:
	$$e = (a \vee c) \wedge (b \vee d), \; f = (a \vee b) \wedge (c \vee d), \;  g = (a \vee d) \wedge (b \vee c).$$
	Let $B = \langle d, e, f, g \rangle_A$. Then, using e.g. \cite[pg. 132]{sandler_coll} it can be seen that: 
	\begin{enumerate}[(i)]
	\item $b \notin B$ (since $\{ b, d, e, g \}$ is a generating quadrangle but $\{ d, e, f, g \}$ is not);
	\item $a \notin B$ (since $a \in B$ implies $(a \vee e) \wedge (d \vee f) = b \in B$);
	\item $c \notin B$ (since $c \in B$ implies $(c \vee f) \wedge (d \vee g) = a \in B$).
\end{enumerate}
Hence, $a, b, c \notin B$. Now, take an isomorphic copy $A'$ of $A$ over $B$ ($f: A' \cong_B A$), so that $A'$ is disjoint from $A$ over $B$, and let $f(a) = a', f(b) = b'$ and $f(c) = c'$. 
Let now $D$ be a partial plane which is the disjoint amalgam of $A$ and $A'$ over $B$. Then $D$ is not an open partial plane, since we can find an homomorphism (in the language $L=  (S_1, S_2, I)$) from the following non-open configuration into $D$:
\begin{enumerate}[(i')]
\item points: $a, b, c, a', b', c', d, e, f, g$;
\item lines: $fa'b', fab, fcdc', a'adg, bcg, b'c'g, aec, a'ec', b'bed$.
\end{enumerate}
\end{proof}

\end{document}